\documentclass[12pt, reqno]{amsart}
\usepackage{psfrag}
\usepackage{amsmath,amssymb,amsfonts,amstext, amsthm, amscd}
\usepackage{verbatim}
\usepackage{graphicx}
\usepackage{enumerate}
\newtheorem{theorem}{Theorem}
\newtheorem{lemma}[theorem]{Lemma}
\newtheorem{proposition}[theorem]{Proposition}
\newtheorem{corollary}[theorem]{Corollary}

\newtheorem{conjecture}{Conjecture}

\newtheorem*{claim*}{Claim}

\theoremstyle{definition}
\newtheorem*{definition*}{Definition}

\theoremstyle{remark}


\newcommand{\df}{\bf \em}

\newcommand\ns[1]{ \left\{ {#1} \right\} }
\newcommand\M{{\mathcal M}}        
\DeclareMathOperator{\metric}{d}
\DeclareMathOperator{\metricstar}{d^{*}_{\mathrm{Lip}}}
\DeclareMathOperator{\metricbar}{\bar{d}}
\DeclareMathOperator*{\argmin}{argmin}

\newcommand{\Z}{{\mathbb Z}}
\newcommand{\R}{{\mathbb R}}
\newcommand{\N}{{\mathbb N}}
\newcommand{\T}{{\mathbb T}}
\newcommand{\e}{\varepsilon}
\newcommand{\follow}{\mathbf{D}}
\newcommand{\nullcon}{{\mathbf{V}}}
\newcommand{\avg}{\mathrm{A}}
\newcommand{\boys}{\mathsf{B}}
\newcommand{\girls}{\mathsf{G}}
\newcommand{\es}{\mathsf{EG}}
\newcommand{\badset}{\mathsf{BS}}

\newcommand{\Bl}{\textrm{[BL]}}

\newcommand\fracc[2]{ #1 /#2 }      
\newcommand\X{{\Omega}}            
\newcommand\garbage[1]{}

\newcommand{\poly}{\mathrm{P}}
\newcommand{\q}{\mathcal{Q}_S}

\begin{document}

\title[Ergodic universality]{Ergodic universality of some topological dynamical systems}
\author[A. Quas]{Anthony Quas}
\author[T. Soo]{Terry Soo}

\address[A. Quas]{Department of Mathematics and Statistics, University
  of Victoria,
PO BOX 3060 STN CSC, Victoria, BC V8W 3R4, Canada}
\email{aquas(at)uvic.ca}  \urladdr{http://www.math.uvic.ca/$\sim$aquas/}

\address[T.  Soo]{Department of Statistics, University of Warwick, Coventry,
CV4 7AL, United Kingdom}

\email{t.soo(at)warwick.ac.uk}
\urladdr{www.warwick.ac.uk/fac/sci/statistics/staff/academic-research/soo}

\thanks{Funded in part by NSERC and MSRI (both authors)}
\keywords{specification, universality, toral automorphism, Burton--Rothstein}
\subjclass[2010]{Primary 37A35}

\begin{abstract}
  The Krieger generator theorem says that every invertible ergodic measure-preserving system
  with finite measure-theor\-etic entropy can be embedded into a full shift with strictly
  greater topological entropy.  We extend Krieger's theorem to include
   toral automorphisms, and more generally, any topological
  dynamical system on a compact metric space that satisfies almost weak specification,
  asymptotic entropy expansiveness, and the small boundary property. As a corollary, one
  obtains a complete solution to a natural generalization of an open problem in Halmos's 1956
  book regarding an isomorphism invariant that he proposed.
  \end{abstract}

\maketitle

\section{Introduction}
\label{intro}

Let $S$ be a self-homeomorphism of a compact metric space $(Y,\metric)$.
Let $T$ be an invertible ergodic measure-preserving transformation on a non-atomic  (Lebesgue) probability space $(\X, \mu)$.
An {\df embedding} of $(\X, \mu, T)$ into $(Y, S)$ is a measurable mapping
$\Psi: \X \to Y$  such that  the restriction of $\Psi$ to a
set of full measure $\X'$ is an injection, and $\Psi( T(\omega)) =
S(\Psi(\omega))$ for all $\omega \in \X'$.    We say that the topological dynamical system $(Y,S)$ is {\df universal}
if for every invertible non-atomic ergodic measure-preserving system $(X, \mu, T)$ with  measure-theoretic entropy strictly less than
the topological entropy of $S$ there exists an embedding of $(\X,\mu,T)$ into $(Y,S)$,
and we say that $(Y,S)$ is {\df fully universal} if the embedding
can be chosen so that the push-forward of the measure on $\X$ is fully supported on $Y$.
The Krieger finite generator
theorem \cite{Krieger1,Krieger1e} says that the full shift on a finite number of symbols is universal.
We prove the following extension of Krieger's theorem.

\begin{theorem}
\label{result-toral-gen}
Toral automorphisms are universal.
\end{theorem}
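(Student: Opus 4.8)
The plan is to deduce Theorem~\ref{result-toral-gen} from the more general result announced in the abstract: any topological dynamical system on a compact metric space satisfying almost weak specification, asymptotic entropy expansiveness, and the small boundary property is universal. Thus the whole argument reduces to verifying that a toral automorphism $S = A$ acting on $\T^n = \R^n/\Z^n$ (where $A \in GL_n(\Z)$) enjoys these three properties, together with an understanding of its topological entropy. First I would recall that the topological entropy of $A$ equals $\sum_{|\lambda_i|>1}\log|\lambda_i|$, the sum of the logarithms of the moduli of the eigenvalues of $A$ lying outside the unit circle; this is the quantity that must strictly exceed the measure-theoretic entropy of the system being embedded.

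The three dynamical hypotheses are checked as follows. For \emph{asymptotic entropy expansiveness}: toral automorphisms (indeed all automorphisms of compact metrizable groups, and more generally all affine maps) are known to be entropy expansive in the hyperbolic case and asymptotically entropy expansive in general — the point is that the ``non-hyperbolic'' directions corresponding to eigenvalues on the unit circle contribute no entropy at any finite scale in the limit, so the tail entropy $h^*(A)$ vanishes. For the \emph{small boundary property}: $\T^n$ is a finite-dimensional compact metric space, and finite-dimensionality implies the small boundary property for \emph{every} homeomorphism (one can find arbitrarily fine covers by sets whose boundaries have measure zero for any fixed Borel measure, using that small inductive dimension is finite). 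For \emph{almost weak specification}: when $A$ is hyperbolic (no eigenvalues on the unit circle) this is the classical specification property of Anosov/hyperbolic automorphisms; the subtlety is the quasihyperbolic case, where $A$ has eigenvalues on the unit circle but none that are roots of unity (the case where some eigenvalue is a root of unity is reducible, and the relevant entropy is $0$). For quasihyperbolic automorphisms one must establish almost weak specification with a sublinear gap function, exploiting the fact that the central direction, while not expansive, still exhibits enough mixing — this typically follows from a careful quantitative shadowing argument along the lines of Lind's work on quasihyperbolic toral automorphisms, or by reducing to the periodic-point structure.

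I expect the main obstacle to be the verification of almost weak specification for quasihyperbolic toral automorphisms. In the hyperbolic setting, the uniform contraction/expansion gives true specification with a bounded gap, and everything is textbook. But when eigenvalues sit on the unit circle, orbits in the central subspace neither contract nor expand uniformly, so concatenating orbit segments requires controlling how far a central-subspace trajectory can drift; one needs a polynomial (or at least sublinear) bound on the gap length needed to patch together $\e$-shadowing segments, and this must be done uniformly. The cleanest route is probably to combine the hyperbolic splitting on the non-central part (handled classically) with an explicit estimate on the central part using that $A$ restricted to the central subspace is, after a change of coordinates, close to a unipotent-times-rotation form — so trajectories there grow only polynomially and a gap of size $O(\log(1/\e))$ or similar suffices. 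Once all three properties are in hand, invoking the general universality theorem and noting that the embedding produces a system with entropy strictly below $h_{\mathrm{top}}(A)$ completes the proof. A final remark: one should also confirm the degenerate cases — if $A$ has $1$ as an eigenvalue with a nontrivial Jordan block or is quasi-unipotent, the topological entropy is $0$, and universality is vacuous (no nonatomic ergodic system has entropy strictly less than $0$), so there is nothing to prove there.
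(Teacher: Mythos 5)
There are two genuine gaps.

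First, your claim that ``finite-dimensionality implies the small boundary property for \emph{every} homeomorphism'' is false. The identity map on $[0,1]$ is a counterexample: every Borel measure is invariant, and any partition of small diameter has a nonempty topological boundary, hence fails $\mu(\partial\mathcal P)=0$ for a suitable point mass $\mu$. The paper instead invokes Kulesza's Lemma~\ref{Kulesza}, which requires the periodic points to form a zero-dimensional set; for a \emph{quasi-hyperbolic} toral automorphism this holds because the periodic points are exactly the rational points of the torus, a countable set. But it fails for general toral automorphisms: if $A$ has a root of unity as an eigenvalue, entire subtori consist of periodic points.

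Second, and more fundamentally, your reduction to the quasi-hyperbolic case is incomplete. You dispose only of the \emph{quasi-unipotent} situation (all eigenvalues roots of unity, entropy zero), but you do not address the mixed case: a toral automorphism $A$ with \emph{some} eigenvalues that are roots of unity and \emph{some} eigenvalues of modulus greater than one. Such an $A$ has positive topological entropy, so universality is not vacuous; yet $A$ is not ergodic with respect to Haar measure (Halmos), hence by Dateyama/Marcus it \emph{fails} almost weak specification, and — as noted above — its periodic set is not zero-dimensional, so Kulesza's lemma does not apply either. Theorem~\ref{result-gen} therefore cannot be applied directly. The paper resolves this through the factorization Lemma~\ref{lin-alg}: every toral automorphism $S$ splits (up to the choice of an appropriate factorization of the characteristic polynomial over $\Z$) as a direct product $S^q\times S^o$ of a quasi-hyperbolic automorphism with the same entropy and a quasi-unipotent automorphism of zero entropy. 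Given an embedding $\Psi$ into $(\T^{d_1},S^q)$ supplied by Theorem~\ref{result-toral}, one sets $\Psi'(\omega)=(\Psi(\omega),0)$, using that $0$ is fixed by $S^o$; this is the missing step in your argument.
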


Following Lind \cite{lind}, we say that a toral automorphism is   {\df quasi-hyperbolic} if its
associated matrix $A$ has no roots of unity as eigenvalues, and {\df hyperbolic} if $A$ does not
have an eigenvalue of modulus $1$.  Lind and Thouvenot \cite[Section 5]{LinTho} proved that
hyperbolic (two-dimensional) toral automorphisms are fully universal  and asked whether
the same is true in the quasi-hyperbolic  case, and in the more general case
of an automorphism of a compact group (that is ergodic with respect to Haar measure).
Lind and Thouvenot made use of the fact that hyperbolic
toral automorphisms can be represented as irreducible shifts of finite type; this is not true in
the non-hyperbolic case \cite[Section 6]{lind-spec}, \cite[Theorem 4]{lind}.    We prove an
affirmative answer to their question, which easily implies Theorem \ref{result-toral-gen}.

\begin{theorem}
\label{result-toral}
A quasi-hyperbolic toral automorphism is fully  universal.
\end{theorem}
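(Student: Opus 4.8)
\emph{Reduction to a dynamical criterion.}
The plan is to isolate a general criterion for full universality and then check it for quasi-hyperbolic toral automorphisms. Concretely, I would prove that a self-homeomorphism $(Y,\metric,S)$ of a compact metric space is fully universal whenever it has (i) almost weak specification (weak specification with a gap function that is $o(n)$), (ii) asymptotic entropy expansiveness in the sense of Misiurewicz (the tail entropy tends to $0$ as the scale does), and (iii) the small boundary property (a basis of open sets whose boundaries are null for every $S$-invariant Borel probability measure). Granting this, Theorem~\ref{result-toral} reduces to verifying (i)--(iii) for the system $(\T^d,S_A)$ where $A\in\mathrm{GL}_d(\Z)$ has no eigenvalue that is a root of unity; note that $h_{\mathrm{top}}(S_A)=\sum_{|\lambda|>1}\log|\lambda|$ is exactly the entropy ceiling appearing in the definition of universality.

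\emph{Verifying the three properties.}
For (ii), $S_A$ is an affine diffeomorphism of a compact manifold, hence asymptotically $h$-expansive; in fact the defect in $h$-expansiveness is carried by the central eigendirections (eigenvalues of modulus $1$), which contribute no entropy, so one can check this by hand. For (iii), $\T^d$ is finite-dimensional, so $(\T^d,S_A)$ has zero mean topological dimension, and the small boundary property follows from the mean-dimension theory of Lindenstrauss and Lindenstrauss--Weiss; alternatively one builds a basis directly from small perturbations of boxes whose boundaries avoid the countably many periodic points and are generic for the invariant measures that matter. Property (i) is the substantive one: a toral automorphism is ergodic with respect to Haar measure precisely when it is quasi-hyperbolic, and ergodic toral automorphisms are known to satisfy weak specification with a sublinear gap function --- this is where the arithmetic of the eigenvalues enters, through equidistribution/Diophantine estimates governing how well orbit segments can be shadowed and concatenated along the neutral directions. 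I would invoke the relevant result (in the circle of ideas of Lind \cite{lind}, Marcus, and Dateyama) or reprove it.

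\emph{Proving the criterion via Burton--Rothstein.}
For the general criterion I would run a Baire-category argument. Fix the target ergodic system $(\X,\mu,T)$ with $h(\mu)<h_{\mathrm{top}}(S)$ and work in a compact convex space of joinings relating it to $(Y,S)$: the $(T\times S)$-invariant Borel probability measures $\lambda$ on $\X\times Y$ with $\X$-marginal $\mu$, in the weak* topology. Using the small boundary property, fix a refining sequence of finite partitions of $Y$ with null boundaries; for every $S$-invariant measure these give a generating symbolic picture, and asymptotic entropy expansiveness guarantees that finitely many scales already capture essentially all the entropy (equivalently, it delivers upper semicontinuity of the entropy function). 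One then shows: (a) the set of $\lambda$ for which the $Y$-coordinate generates, mod $\lambda$, the whole $\sigma$-algebra --- so that $\lambda$ is the graph joining of an a.e.\ injective $\Psi\colon\X\to Y$ realizing an isomorphism onto $(Y,(\pi_Y)_*\lambda,S)$ --- contains a dense $G_\delta$; density is exactly where almost weak specification is used, to build invariant measures on $Y$ that copy the $\mu$-dynamics on any prescribed finite combinatorial pattern while wasting only $o(n)$ of the time budget, hence staying under the entropy ceiling; and (b) the set of $\lambda$ whose $Y$-marginal has full support is also a dense $G_\delta$, again via specification. Baire's theorem produces a $\lambda$ in the intersection, and its $\Psi$ is the desired fully supported embedding.

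\emph{Main obstacle.}
The hard part is the general criterion, specifically pushing the Burton--Rothstein machinery past expansiveness and past full specification simultaneously: one must verify that the $o(n)$ gaps forced by \emph{almost weak} specification do not spoil the density statements, i.e.\ that the entropy bookkeeping survives sublinear losses, and one must replace ``clopen generating partition'' with small-boundary partitions, using asymptotic entropy expansiveness to certify that these partitions genuinely see the dynamics of every competing invariant measure. The second delicate ingredient is the specification property for quasi-hyperbolic but non-hyperbolic toral automorphisms, which are not subshifts of finite type \cite[Section 6]{lind-spec}, so the specification must be extracted from the number-theoretic behaviour of the neutral eigendirections rather than from Markov combinatorics.
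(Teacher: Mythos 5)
Your proposal follows the paper's route exactly: it isolates the same general criterion (almost weak specification, asymptotic entropy expansiveness, small boundary property $\Rightarrow$ full universality), proves it by the same Burton--Rothstein Baire-category scheme over the space of $\mu$-joinings, and then verifies the hypotheses for quasi-hyperbolic toral automorphisms with essentially the same inputs (Marcus for almost weak specification, Bowen/Misiurewicz for entropy expansiveness). The one place you drift is the small boundary property. Invoking Lindenstrauss--Weiss mean dimension by itself is not enough: that theory delivers the small boundary property under an aperiodicity/marker hypothesis, and quasi-hyperbolic toral automorphisms have a dense set of periodic points (the rational points), so the naive chain ``finite-dimensional $\Rightarrow$ zero mean dimension $\Rightarrow$ small boundary property'' does not close. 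Your parenthetical alternative---perturbing box boundaries off the countable set of periodic points---is the correct fix and is precisely the content of Kulesza's lemma cited in the paper: a self-homeomorphism of a compact finite-dimensional metric space whose periodic points form a zero-dimensional set has the small boundary property. With that substitution your argument coincides with the paper's.
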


Lindenstrauss and Schmidt \cite{MR2121533,MR2191216}  studied the structure of invariant measures for quasi-hyperbolic toral automorphisms  and established strong structural properties for any such measure.   For this reason, it may come as a surprise that these systems have the universality property that we establish in this paper.

Theorem \ref{result-toral} will be proved as part of a more general result
which we state in Section \ref{non-exp}.
The result has three conditions, one of which is a form of specification.

Let $S$ be a self-homeomorphism of a compact metric space $(Y,\metric)$.     Suppose that for every $\e >0$ there
exists a function   $L_{\e} : \Z^{+} \to \Z^{+}$   such that given a finite number
of points $y_1, \ldots, y_n \in Y$
and finite sequence of integers $a_1 \leq b_1<a_2 \leq b_2
\cdots a_n \leq b_n$ with $a_i - b_{i-1} \geq L_{\e}(b_i-a_i)$
for all $i \in[2,n]$, there is a $y \in Y$ with $\metric(S^k y, S^k y_i)
\leq \e$ for all $k\in[a_i,b_i]$ and $i \in [1,n]$.
We call $L_{\e}$ a {\df gap} function.  If the gap function $L_{\e}$ satisfies $\fracc{L_\e(m)}{m} \to 0$ as $m \to \infty$,
then we say that $S$ satisfies
{\df almost weak specification};  if the function $L_{\e}$ is a constant function, then $S$ satisfies
{\df weak specification}, and in addition, if $y$ can be chosen to be a periodic point, then
$S$ satisfies {\df specification} (for background see \cite{Bowen2, thom, Dateyama,  Sigmund, MR2529890}).  Without loss of generality, we will always assume that a gap function is non-decreasing.

Marcus \cite{Marcus} proved that  quasi-hyperbolic
toral automorphisms satisfy almost weak specification.
Let us remark that  a toral automorphism is ergodic with respect to Haar
measure if and only if it is quasi-hyperbolic  \cite{Halmos-autos}.

\begin{conjecture}
\label{con}
A self-homeomorphism with almost weak specification on a compact metric space is (fully) universal.
\end{conjecture}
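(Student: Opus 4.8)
\section*{A proof proposal for Conjecture \ref{con}}

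The plan is to run the Burton--Rothstein variational/Baire-category argument inside a space of joinings, isolating precisely where the two hypotheses absent from the conjecture (asymptotic entropy expansiveness and the small boundary property) are used in the general theorem, and indicating what would have to replace them. Fix an invertible non-atomic ergodic system $(\X,\mu,T)$ with $h(\mu) < h_{\mathrm{top}}(S)$. Let $\mathcal J$ be the set of joinings $\lambda$ on $\X \times Y$ --- that is, $T \times S$-invariant Borel probability measures whose $\X$-marginal is $\mu$ --- with the weak$^{*}$ topology; $\mathcal J$ is nonempty, compact and metrizable, hence Polish. An embedding of $(\X,\mu,T)$ into $(Y,S)$ is, in this language, a $\lambda \in \mathcal J$ that is a \emph{graph joining} (concentrated on the graph of a measurable $\Psi \colon \X \to Y$ with $\Psi \circ T = S \circ \Psi$) whose $Y$-factor is metrically \emph{isomorphic} to $(\X,\mu,T)$, equivalently whose relative entropy over its $Y$-marginal vanishes; for full universality one additionally requires that the $Y$-marginal $\pi^Y_{*}\lambda$ has full support. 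So the goal is to exhibit the set of such $\lambda$ as a dense $G_\delta$ subset of (a suitable closed subspace of) $\mathcal J$.

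The first ingredient is a finite coding lemma powered by almost weak specification. For each $\e > 0$, $\delta > 0$, and all large $N$, one produces a family $W$ of length-$N$ $\e$-pseudo-orbit blocks in $Y$ with $|W| \ge e^{N(h_{\mathrm{top}}(S) - \delta)}$, pairwise $\e$-separated and freely concatenable using interpolation gaps of size $L_\e(N)$; the hypothesis $\fracc{L_\e(N)}{N} \to 0$ is exactly what makes the spacing overhead entropy-negligible, so that concatenations of blocks from $W$ still carry entropy arbitrarily close to $h_{\mathrm{top}}(S)$. After a standard Rokhlin-tower reduction of $(\X,\mu,T)$, and using $h(\mu) < h_{\mathrm{top}}(S)$ with room to spare, one codes $\mu$-typical names into such concatenations, obtaining joinings $\lambda_{\e,N} \in \mathcal J$ under which the $Y$-coordinate is, on a set of measure close to $1$, within $\e$ of both determining and being determined by the $\X$-coordinate. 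Reserving in addition a sparse sequence of times on which one spells out an enumeration of a countable dense set of finite $(Y,S)$-orbit segments makes $\pi^Y_{*}\lambda$ charge every metric ball. Each of these approximate conditions (approximately a graph joining, approximately invertible, charging a given ball) is an \emph{open} condition on $\mathcal J$, and the $\lambda_{\e,N}$ witness density; so their countable intersection over a shrinking $\e$ is a dense $G_\delta$.

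The remaining step upgrades ``approximate'' to ``exact'': one must show that a $\lambda$ in this intersection is genuinely a graph joining and that the coding map $\Psi$ is an isomorphism onto its image, not merely a factor map, with $\pi^Y_{*}\lambda$ fully supported. In the general theorem this is delivered by two facts. First, asymptotic entropy expansiveness gives upper semicontinuity of $\nu \mapsto h(\nu)$ on $\mathcal{M}_S(Y)$, so the entropy constraint $h(\pi^Y_{*}\lambda) \le h_{\mathrm{top}}(S)$ survives the weak$^{*}$ limit; combined with the coded zero-relative-entropy control this pins $h(\pi^Y_{*}\lambda) = h(\mu)$ and prevents the limit joining from acquiring spurious entropy that would destroy injectivity. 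Second, the small boundary property furnishes a refining sequence of partitions of $Y$ whose boundaries are null for $\pi^Y_{*}\lambda$, so the finite-scale agreement built into the $\lambda_{\e,N}$ passes to the limit and certifies $\Psi$ as a metric isomorphism rather than a proper factor.

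This last step is the crux, and the reason the statement is a conjecture rather than a theorem. With almost weak specification alone, $\nu \mapsto h(\nu)$ may fail to be upper semicontinuous, so the very set over which one would take a Baire category argument is not closed, and the weak$^{*}$ limit of the $\lambda_{\e,N}$ can gain entropy and cease to be injective; moreover $Y$ need not admit arbitrarily fine partitions with universally null boundaries, so even an honest graph joining obtained in the limit might code only a proper factor of $(\X,\mu,T)$. To prove the conjecture one would seemingly need one of the following. Either (i) a fully quantitative coding that outputs $\Psi$ directly, with an explicit modulus of injectivity surviving to the limit --- but making $\Psi$ simultaneously measurable and essentially injective without some form of the small boundary property appears to be the genuine difficulty; or (ii) an approximation theorem realizing every almost-weak-specification system as an inverse limit, or a sufficiently tame extension, of systems that do enjoy asymptotic entropy expansiveness and the small boundary property, together with a transfer principle for (full) universality along the associated maps. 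Neither the approximation nor the transfer is currently available, since universality is not obviously preserved under factor maps (injectivity of a lift need not descend) nor under inverse limits; a new idea at this point is what the conjecture is really asking for.
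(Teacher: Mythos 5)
The statement you were asked to prove is labelled a \emph{conjecture} in the paper, and the paper does not prove it: it proves only the weaker Theorem \ref{result-gen}, which adds asymptotic entropy expansiveness and the small boundary property as hypotheses, and it explicitly records (citing Weiss) that even dropping just the entropy-expansiveness hypothesis is unpublished progress. Your proposal, to its credit, does not pretend otherwise --- it is a roadmap of the Burton--Rothstein scheme for the weaker theorem together with an honest account of why the two extra hypotheses cannot simply be deleted. As such it contains no proof of the conjecture, and there is none in the paper to compare it against.

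Your localization of where the hypotheses enter does match the paper's actual argument for Theorem \ref{result-gen}: upper semicontinuity of the entropy functional (via Misiurewicz's Lemma \ref{lem:Misiurewicz}) is used in Lemma \ref{baire-lemma} to make the ambient set $\M_0$ closed, hence a Baire space; and the small boundary property is used in Proposition \ref{enl-open}(\ref{open}) to make the approximate-embedding sets $E^{n,\ell}_\mu$ relatively open, since one needs partitions of $Y$ whose boundaries are null for \emph{every} invariant measure so that $\xi\mapsto\xi(X\times A)$ is continuous where it matters. One small correction: the entropy constraint that must survive the weak$^{*}$ limit is the \emph{lower} bound $h_{\pi_2^*(\xi)}(S)\ge h_\mu(T)$ (upper semicontinuity makes superlevel sets closed), not the upper bound $h(\pi_2^*\xi)\le h_{\mathrm{top}}(S)$, which is automatic; and the paper does not demand that the $Y$-factor be isomorphic to $(\X,\mu,T)$ --- the two containments of Proposition \ref{rue} already characterize embeddings, with no requirement on the full $Y$-marginal beyond what Lemma \ref{full-support} arranges for full support. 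With those adjustments, your diagnosis of the obstruction is accurate, but the conjecture remains exactly that.
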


Recently, we also proved that the time-one map of a geodesic flow on a compact
surface of negative curvature
is universal \cite{QuaSoo}.   With the help of the symbolic dynamics for geodesic flows developed by Bowen
\cite{Bowen3} and Ratner \cite{Ratner}, it was sufficient to show that the time-one map of a
topologically weak-mixing suspension flow over an
irreducible subshift of finite type
is universal;
in our proof of this result we were aided by the symbolic nature of the suspension flow and the
fact that the time-one map satisfies (weak) specification \cite[Proposition 5]{QuaSoo}.

Krieger also proved that mixing subshifts of finite type are universal \cite{kri-icm}
(see also the proof given by Denker \cite[Theorem 28.1]{MR0457675}, and the  `Borel' embedding given by Hochman \cite{MR3077948},\cite{MR3125641}, \cite{buzzi}).
We  also prove the following generalization of Krieger's theorem,
and Lind and Thouvenot's result \cite[Theorem 2]{LinTho} that every mixing
subshift of finite type is fully universal.

\begin{theorem}
\label{result-sym}
A subshift with almost weak specification on a finite number of symbols  is fully universal.
\end{theorem}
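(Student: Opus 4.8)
\emph{Strategy.} The plan is to run the Burton--Rothstein argument in a complete metric space of candidate embeddings and to show that being an embedding, being injective, and having fully supported push\nobreakdash-forward are each residual properties. Write $Y\subseteq\mathcal A^{\Z}$ for the subshift, $\mathcal A$ finite, $S$ the shift, and fix an invertible non-atomic ergodic $(\X,\mu,T)$ with $h_\mu(T)<h_{\mathrm{top}}(S)$; if $h_{\mathrm{top}}(S)=0$ there is nothing to prove. A measurable $\Psi\colon\X\to\mathcal A^{\Z}$ with $\Psi\circ T=S\circ\Psi$ a.e.\ is the same thing as an $\mathcal A$-indexed measurable partition $P=(P_a)_{a\in\mathcal A}$ of $\X$, via $\Psi=\pi_P$, $\pi_P(\omega)_n=a\iff T^n\omega\in P_a$; such a $\Psi$ is $Y$-valued a.e.\ exactly when $\pi_P(\omega)\in Y$ for $\mu$-a.e.\ $\omega$. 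Let $\mathcal G$ be the set of such partitions, metrized by $\metric(P,Q)=\sum_a\mu(P_a\triangle Q_a)$. Since $Y$ is closed, a Borel--Cantelli argument along a summable subsequence shows that being $Y$-valued a.e.\ is preserved under $\metric$-limits, so $\mathcal G$ is a closed subset of the (complete) space of all $\mathcal A$-partitions, hence itself complete; it is nonempty by the tower-and-specification construction used below. Finally, $\pi_P$ is an embedding iff $P$ generates $(\X,\mu,T)$ (equivalently, $\pi_P$ is injective off a null set), and $(\pi_P)_*\mu$ is a fully supported $S$-invariant measure on $Y$ iff $(\pi_P)_*\mu([w])>0$ for every word $w$ occurring in $Y$.

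\emph{The good sets.} For each admissible word $w$ put $\mathcal G_w=\{P\in\mathcal G:(\pi_P)_*\mu([w])>0\}$; since $P\mapsto(\pi_P)_*\mu([w])=\mu\big(\bigcap_i T^{-i}P_{w_i}\big)$ is $\metric$-continuous, $\mathcal G_w$ is open. Let $\mathcal G_{\mathrm{gen}}$ be the set of generating partitions; fixing a countable $\metric$-dense family $\{B_j\}$ in $\mathcal B_\X$, $P$ generates iff for every $j$ and every $k$ there is an $n$ and an element of the finite algebra generated by $\{T^{-i}P_a:|i|\le n\}$ within $1/k$ of $B_j$, and each such condition is open in $P$, so $\mathcal G_{\mathrm{gen}}$ is a $G_\delta$. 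It therefore suffices to prove that $\mathcal G_{\mathrm{gen}}$ and each $\mathcal G_w$ are dense in $(\mathcal G,\metric)$: then $\mathcal G_{\mathrm{gen}}\cap\bigcap_w\mathcal G_w$ is residual, hence nonempty by Baire, and any $P$ in it is a generating partition with $(\pi_P)_*\mu$ fully supported on $Y$, i.e.\ $\pi_P$ witnesses full universality.

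\emph{Density via specification.} Both density statements reduce to a copying/marker lemma that respects $Y$. Given $P\in\mathcal G$ and $\e>0$, take a Rokhlin tower of large height $N$ over a base of small measure, with exceptional set of $\mu$-measure $<\e$. Along each column, replace the $P$-name on an interval of length $N$ by a new, $Y$-admissible word of length $N$, chosen so that (i) it concatenates with the neighbouring unmodified name-pieces: here one uses almost weak specification, inserting spacers of length $L_\e(N)=o(N)$ and invoking the gap function to glue, so that the resulting bi-infinite sequence lies in $Y$; and (ii) it carries the required information. For density of $\mathcal G_w$ one simply arranges that $w$ occurs in the new word. For density of $\mathcal G_{\mathrm{gen}}$ one encodes, with $O(\log N)$ overhead for synchronization, enough $(B_j)_{j\le J}$-data along the orbit to refine $B_1,\dots,B_J$ to within $1/k$; this is possible because, by Shannon--McMillan--Breiman, only $e^{N(h_\mu(T)+o(1))}$ essentially distinct words must be encoded, whereas $Y$ has at least $e^{N(h_{\mathrm{top}}(S)-o(1))}$ admissible words of length $N-L_\e(N)$, and $h_{\mathrm{top}}(S)>h_\mu(T)$. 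Only a fraction $O\big(N^{-1}L_\e(N)+N^{-1}\log N\big)+\e$ of $\X$ is altered, so $\metric(P,P')$ is small, and $P'\in\mathcal G$ since its name is a concatenation of $Y$-blocks bridged by specification; the same construction with $P$ the trivial partition shows $\mathcal G\ne\varnothing$.

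\emph{The main obstacle.} The crux is exactly this $Y$-respecting copying lemma: unlike in classical Krieger-type arguments one may not freely prescribe symbolic names, so every modification must be realizable inside $Y$, and the entropy bookkeeping has to absorb the specification gaps. Because only \emph{almost} weak specification is available, the gaps $L_\e(N)$ grow with $N$; the construction survives only because $L_\e(N)/N\to0$, so one must choose $N$ large enough that $L_\e(N)/N$, the synchronization overhead, and the tower error are all small relative to the entropy slack $h_{\mathrm{top}}(S)-h_\mu(T)$, and check that the word-count estimate $e^{N(h_{\mathrm{top}}(S)-o(1))}>e^{N(h_\mu(T)+o(1))}$ is not destroyed by passing to the reduced block length $N-L_\e(N)$. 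Verifying that the names glued across the base of the Rokhlin tower genuinely lie in $Y$, and that $\pi_{P'}$ remains measurable and $T$-equivariant, is the other careful-but-routine ingredient; granting the lemma, the Baire-category conclusion is immediate.
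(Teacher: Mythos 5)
Your overall strategy (Baire category, Rokhlin towers, specification to glue blocks) is in the spirit of the paper, which proves this theorem as an immediate corollary of the more general Theorem~\ref{result-gen} (a subshift is expansive, hence asymptotically entropy expansive, and its phase space is zero-dimensional, so the small boundary property is automatic). However, there is a fatal gap in the choice of ambient space and metric: the set $\mathcal G_{\mathrm{gen}}$ of generating $\mathcal A$-partitions is \emph{not} $\metric$-dense in $\mathcal G$ when $\metric(P,Q)=\sum_a\mu(P_a\triangle Q_a)$. Indeed, $\tfrac12\metric(P,P')=\mu\{\omega:\mathcal P(\omega)\ne\mathcal P'(\omega)\}$, so $\metric$-closeness forces the $P$- and $P'$-processes to be $\metricbar$-close; by the entropy continuity estimate (Lemma~\ref{phi-en}), process entropy $h_\mu(T,\cdot)$ is therefore $\metric$-continuous. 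Consequently, if $P\in\mathcal G$ satisfies $h_\mu(T,P)<h_\mu(T)$ (for instance $P$ producing a constant sequence, whenever $Y$ has a fixed point, or any low-entropy element of $\mathcal G$), then every $P'$ with $\metric(P,P')$ small still has $h_\mu(T,P')<h_\mu(T)$ and so cannot be generating -- the orbit algebra it produces has strictly smaller entropy than $\mathcal B_\X$. Such a $P$ is isolated from $\mathcal G_{\mathrm{gen}}$, and the Baire argument collapses.

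This incompatibility is already visible inside your density sketch: you claim simultaneously that only a fraction $O(L_\e(N)/N+\log N/N)+\e$ of $\X$ is altered, and that the new word encodes $e^{N(h_\mu(T)+o(1))}$ essentially distinct names. These cannot both hold when $h_\mu(T)$ is a positive constant -- re-encoding that much information necessarily alters a positive fraction of symbols, and $\metric(P,P')$ is then bounded away from $0$. The paper avoids exactly this obstruction by working not with partitions in the $\metric$-metric but with $\mu$-joinings $\xi\in J_\mu(T,S)$ in the weak$^*$ topology (Lemma~\ref{baire-lemma}, Proposition~\ref{enl-open}). There, wholesale re-encoding of the symbolic names is permitted, because one only has to keep the Birkhoff averages of Lipschitz functions close; the Brin--Katok/Downarowicz--Weiss estimates, Hall's marriage theorem, and the condition $S_{5,N}$ of Lemma~\ref{choose-N} are precisely what guarantee that the re-coded $Y$-blocks live in Bowen balls carrying the right weak$^*$ statistics (Corollary~\ref{dict}~\eqref{eq:wstarest}). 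You also need to preserve the entropy side condition $h_{\pi_2^*(\xi)}(S)\ge h_\mu(T)$ across perturbations -- a real issue the paper handles via the definition of $\M_0$ and Lemma~\ref{up} -- whereas your $\mathcal G$ carries no such constraint and your construction does not track it. Finally, your argument that $\mathcal G\ne\varnothing$ is itself circular: producing any equivariant map into $Y$ already requires the Rokhlin-plus-specification construction, which is essentially the content being proved.
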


We will prove a weaker version of Conjecture \ref{con}, Theorem \ref{result-gen},
from which Theorems \ref{result-toral} and \ref{result-sym} will follow.
We will require some additional conditions that are  satisfied
in Theorems \ref{result-toral} and \ref{result-sym}.   We will give the precise statement of
these additional conditions in the next section.

In our proof of Theorem \ref{result-gen}, we make use of an idea of
Burton and Rothstein \cite{BurRot}, further developed in work
of Burton, Keane and Serafin \cite{BurKeaSer}, and Downarowicz and Serafin \cite{easyorn}
that involves producing the required injections using the
Baire Category theorem.

In Section \ref{section-examples} we will prove Theorems \ref{result-toral} and  \ref{result-sym} using our more general
result.
We will also generalize Theorem \ref{result-toral} to  include any automorphism of a compact
metric abelian group that is ergodic with respect to Haar measure;   with this generalization we will examine a generalization of an  isomorphism invariant proposed by Halmos (see Corollary \ref{weiss}).   In Section \ref{section-soft}
we set the stage for the proof of our more general result; the proof will be carried out  in the remaining sections.

\section*{Acknowledgments} We would like to thank Jean-Paul Thouvenot for
introducing us to the problem and Benjy Weiss for his interest in our paper and for pointing out Corollary \ref{weiss}.  We are also grateful for the  referee's useful suggestions.

\section{Non-expansive homeomorphisms and the small boundary condition}
\label{non-exp}

We recall in this section some basic tools to deal with non-expansive homeomorphisms.
Let $(X, \metric)$ denote a compact metric space.    For $ r>0$, we let $B(x,r)$ denote the open
 ball about a point $x \in X$, and for a subset $A \subset X$, we let  $B(A,r) := \bigcup_{x \in A} B(x,r)$.
If $\mathcal P$ is a finite (Borel-)measurable partition of $X$
its {\df diameter} is defined to be the maximum of the diameters of the elements of the partition.
The set $\partial\mathcal P$ is the union of the topological boundaries of the elements of $\mathcal P$
and $\partial_r\mathcal P$ denotes $\overline{B(\partial \mathcal P,r)}$.
We recall that a partition is said to be {\df generating} if for any distinct pair of
points, $x$ and $y$, there exists an $n\in\Z$ such
that $T^nx$ and $T^ny$ lie in different elements of $\mathcal P$.

A self-homeomorphism $T$ of a metric space $(X, \metric)$ is  {\df expansive} if there exists
a $\delta>0$ (the {\df expansiveness constant})
such that  for all $x,y \in X$ if $\metric(T^nx,T^ny)<\delta$ for all $n\in\Z$, then $x=y$.
Expansive homeomorphisms have many generating partitions:
Indeed any partition of diameter less than the expansiveness constant is generating.
We recall that any subshift (that is the restriction of the shift map
to a non-empty closed shift-invariant subset of the full shift) is expansive.

We write $M_T(X)$ for the collection of $T$-invariant Borel probability measures on $X$.
Another desirable feature of expansive homeomorphisms is that the {\df entropy
functional} given by $\mu\mapsto h_\mu(T)$ sending an invariant measure
to its measure-theoretic entropy is upper semi-continuous with respect to the weak$^*$ topology
(see Lemma \ref{weak-metric} for an explicit metric which generates the weak$^*$ topology).
In
order to consider non-symbolic spaces we will make use of two successive weakenings
due to Bowen \cite{Bowen}
and Misiurewicz \cite{Misiurewicz2} which allow us to recover the upper semi-continuity of the entropy functional.

For each $\delta >0$, define the set
\begin{equation*}
\Gamma_\delta(x):= \{y\in X\colon \metric(T^jx,T^jy)<
\delta\text{ for all $j\geq 0$}\}.
\end{equation*}
If there is a constant $\delta>0$ such that $h_\mathrm{top}(T,\Gamma_\delta(x))=0$ for all $x\in X$,
then $T$ is {\df entropy expansive}, and
if $\sup_{x\in X}h_\mathrm{top}(T,\Gamma_\delta(x))\to 0$ as $\delta\to 0$, then $T$ is
{\df asymptotically entropy expansive}.

That these are strict weakenings of expansiveness is seen by simple examples such as the
identity map, twist maps such
as $T(x,y)=(x,y+x)\bmod 1$.
More substantial examples are given by the following lemma.

\begin{lemma}[Bowen \cite{Bowen} Example 1.2]
\label{toral-bowen}
Toral automorphisms are entropy expansive.
\end{lemma}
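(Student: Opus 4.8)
The plan is to split the tangent space at each point into the contracting, expanding, and central directions for the linear map $A$ on $\mathbb{R}^d$ (lifting the automorphism of $\mathbb{T}^d = \mathbb{R}^d/\mathbb{Z}^d$), and to argue that the dynamical ball $\Gamma_\delta(x)$ lives, up to bounded distortion, in a bounded piece of the central subspace, on which the dynamics has zero topological entropy. First I would fix a splitting $\mathbb{R}^d = E^s \oplus E^u \oplus E^c$ into the $A$-invariant subspaces spanned by the generalized eigenspaces for eigenvalues of modulus $<1$, $>1$, and $=1$ respectively, and choose an adapted norm (or pass to a suitable power and an equivalent norm) so that $A$ is a genuine contraction on $E^s$, a genuine expansion on $E^u$, and has at most polynomial growth in both time directions on $E^c$. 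Writing $\pi$ for the projection onto $\mathbb{T}^d$ and working with a lift, for $y \in \Gamma_\delta(x)$ choose a lift $\tilde y$ of $y$ near the lift $\tilde x$ of $x$, and decompose $v := \tilde y - \tilde x = v^s + v^u + v^c$. Since $\metric(T^j x, T^j y) < \delta$ for all $j \ge 0$, the component $A^j v^u$ stays within distance $O(\delta)$ of $\mathbb{Z}^d$; because it is genuinely expanding and starts within a bounded region, a standard argument forces $v^u = 0$ (otherwise its norm would eventually exceed any bound while remaining close to the lattice, which is impossible since consecutive iterates cannot jump between lattice points while staying $\delta$-close to $0$ in the unstable direction if $\delta$ is small enough relative to the expansion rate and the injectivity radius).

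Having killed the unstable component, $\Gamma_\delta(x)$ is (up to the covering projection) contained in a bounded neighborhood of $x + (E^s \oplus E^c)$, and in fact, iterating forward, the stable part decays, so the ``essential'' content of $\Gamma_\delta(x)$ as far as topological entropy is concerned is carried by a bounded subset of the central leaf $x + E^c$. On $E^c$ the induced map has subexponential (polynomial) orbit growth, so any $(n,\e)$-separated subset of a fixed bounded piece of it has cardinality growing only polynomially in $n$; hence $h_\mathrm{top}(T, \Gamma_\delta(x)) = 0$. To make this precise I would use the definition of topological entropy of $T$ on a (not necessarily invariant or compact) subset $Z$ via $(n,\e)$-spanning or separated sets: it suffices to show that for each $\e>0$, $\Gamma_\delta(x)$ can be $(n,\e)$-spanned by subexponentially many points. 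The stable directions contribute nothing because contraction makes a single $\e/2$-ball eventually cover them in the forward metric; the central directions contribute at most a polynomial count because a ball of fixed radius in $E^c$ maps under $A^n$ to a set of diameter $O(n^{d})$, which can be covered by $O(n^{d \cdot \dim E^c})$ balls of radius $\e/2$; and the unstable directions do not appear at all by the previous paragraph. Taking $\delta$ smaller than the injectivity radius of $\mathbb{T}^d$ and smaller than a threshold determined by the gap in the spectrum of $A$ makes all these estimates uniform in $x$, which in particular shows $T$ is entropy expansive (with a fixed $\delta$, not merely asymptotically).

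The main obstacle is the bookkeeping around the covering map $\mathbb{R}^d \to \mathbb{T}^d$: the condition $\metric(T^j x, T^j y) < \delta$ only controls $A^j v$ modulo $\mathbb{Z}^d$, not $A^j v$ itself, so the argument that $v^u = 0$ requires care to rule out the possibility that $A^j v^u$ escapes to infinity while repeatedly returning near lattice points. The clean way to handle this is to choose $\delta$ small enough that the $\delta$-ball in $\mathbb{T}^d$ lifts homeomorphically, track the unique lifts $\widetilde{T^j y}$ staying within $\delta$ of $\widetilde{T^j x}$, and observe that these lifts satisfy $\widetilde{T^j y} - \widetilde{T^j x} = A^j v$ exactly (no lattice corrections), at which point expansion on $E^u$ immediately forces $v^u = 0$. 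A secondary, purely technical point is the reduction to an adapted norm and possibly a power $A^N$ of $A$ to get genuine contraction/expansion and polynomial control on the center; since topological entropy of $T^N$ on $\Gamma_\delta(x)$ is $N$ times that of $T$ and $\Gamma_\delta^{(T)}(x) \subseteq \Gamma_\delta^{(T^N)}(x)$, zero entropy for the power gives zero entropy for $T$, so this reduction is harmless.
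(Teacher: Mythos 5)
The paper does not prove Lemma \ref{toral-bowen}; it cites Bowen's Example 1.2 (in ``Entropy-expansive maps''), which is precisely the argument you give: lift to $\mathbb{R}^d$, split into $E^s\oplus E^u\oplus E^c$, use the injectivity radius and a bound $\delta<1/(1+\|A\|)$ so that the lifts of $T^j y$ tracking $A^j\tilde x$ pick up no lattice corrections, conclude $v^u=0$ from forward expansion, and then get zero entropy on $\Gamma_\delta(x)$ from contraction on $E^s$ and polynomial growth on $E^c$. Your proposal is correct, uniform in $x$, and matches the cited source's approach, so there is nothing to add.
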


Thus while hyperbolic toral automorphisms are expansive, in general toral automorphisms are only entropy expansive.

\begin{lemma}[Misiurewicz \cite{Misiurewicz2} Corollary 4.1]
\label{lem:Misiurewicz}
    Let $T$ be an  asymptotically entropy expansive self-homeomorphism of a
    compact metric space $X$.
    Then the entropy functional  is upper
    semi-continuous with respect to the weak$^*$ topology on $M_T(X)$.
\end{lemma}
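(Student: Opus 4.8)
The plan is to reduce upper semi-continuity of $\nu\mapsto h_\nu(T)$ at a fixed point $\mu$ to the elementary continuity properties of $\nu\mapsto H_\nu(\mathcal Q)$ for finite partitions $\mathcal Q$ with $\mu$-null boundary, and to pay for the reduction with a tail-entropy term that asymptotic entropy expansiveness forces to be small. Fix $\mu\in M_T(X)$ and $\e>0$. First I would use asymptotic entropy expansiveness to pick $\delta>0$ so small that
\begin{equation*}
h^*(\delta):=\sup_{x\in X}h_\mathrm{top}(T,\Gamma_\delta(x))<\e/3 .
\end{equation*}
Next I would build a finite Borel partition $\mathcal P$ of $X$ with $\operatorname{diam}\mathcal P<\delta$ and $\mu(\partial\mathcal P)=0$: for each $x$ all but countably many radii $r$ satisfy $\mu(\partial B(x,r))=0$, so by compactness $X$ is covered by finitely many balls of radius $<\delta/2$ whose bounding spheres are $\mu$-null, and the partition they generate has cell boundaries contained in the finite (hence $\mu$-null) union of those spheres.

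The routine analytic step is then to show $\nu\mapsto h_\nu(T,\mathcal P)$ is upper semi-continuous at $\mu$. Since $T$ is a homeomorphism and $\mu$ is $T$-invariant, $\partial\mathcal P_0^{n-1}\subseteq\bigcup_{i=0}^{n-1}T^{-i}(\partial\mathcal P)$ has $\mu$-measure $0$ for every $n$, where $\mathcal P_0^{n-1}=\bigvee_{i=0}^{n-1}T^{-i}\mathcal P$. For a finite partition all of whose cells are $\mu$-continuity sets, the portmanteau theorem gives $\nu(\text{cell})\to\mu(\text{cell})$ as $\nu\to\mu$ in the weak$^*$ topology, and since $t\mapsto-t\log t$ is continuous on $[0,1]$ the map $\nu\mapsto H_\nu(\mathcal P_0^{n-1})$ is continuous at $\mu$. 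By the Kolmogorov--Sinai identity $h_\nu(T,\mathcal P)=\inf_n\tfrac1n H_\nu(\mathcal P_0^{n-1})$, so this function is an infimum of maps continuous at $\mu$, hence upper semi-continuous at $\mu$; fix a weak$^*$ neighbourhood $U$ of $\mu$ with $h_\nu(T,\mathcal P)<h_\mu(T,\mathcal P)+\e/3$ for all $\nu\in U$.

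The one substantial ingredient — and the step I expect to be the main obstacle — is the Bowen--Misiurewicz inequality
\begin{equation*}
h_\nu(T)\ \le\ h_\nu(T,\mathcal P)+h^*(\delta)\qquad\text{whenever }\operatorname{diam}\mathcal P<\delta,
\end{equation*}
holding uniformly over all $\nu\in M_T(X)$. This is where asymptotic entropy expansiveness does the real work: by Kolmogorov--Sinai it suffices to bound $h_\nu(T,\mathcal R)-h_\nu(T,\mathcal P)$ uniformly for partitions $\mathcal R$ refining $\mathcal P$, and one controls $h_\nu(T,\mathcal R\mid\mathcal P)$ by noting that within any atom of $\mathcal P_{-n}^{\,n}$ the points are $\delta$-close along $[-n,n]$, so the number of $\mathcal R$-refinements of such an atom grows at a rate governed by the topological entropy of $T$ on the sets $\Gamma_\delta(x)$, i.e.\ by $h^*(\delta)$; a convexity/subadditivity estimate then turns this counting bound into the displayed inequality in the limit. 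Granting this, for every $\nu\in U$,
\begin{equation*}
h_\nu(T)\ \le\ h_\nu(T,\mathcal P)+\tfrac\e3\ <\ h_\mu(T,\mathcal P)+\tfrac{2\e}3\ \le\ h_\mu(T)+\tfrac{2\e}3\ <\ h_\mu(T)+\e,
\end{equation*}
which gives upper semi-continuity of the entropy functional at $\mu$, and hence on all of $M_T(X)$.
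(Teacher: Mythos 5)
The paper gives no proof of this lemma---it is cited directly from Misiurewicz---and your sketch reproduces the standard argument of that reference faithfully: for a fixed partition $\mathcal P$ of diameter $<\delta$ with $\mu$-null boundary, upper semi-continuity of $\nu\mapsto h_\nu(T,\mathcal P)$ at $\mu$ follows from the portmanteau theorem and $h_\nu(T,\mathcal P)=\inf_n\tfrac1n H_\nu(\mathcal P_0^{n-1})$, and the Misiurewicz/Bowen tail-entropy inequality $h_\nu(T)\le h_\nu(T,\mathcal P)+\sup_x h_\mathrm{top}(T,\Gamma_\delta(x))$ pays for the reduction. You correctly identify that last inequality as the one non-routine ingredient; you treat it as a black box, which is exactly the division of labour in Misiurewicz's paper (where it is the main proposition and the upper semi-continuity is the corollary).
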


Bowen proved Lemma \ref{lem:Misiurewicz} in the case where $T$ is entropy expansive.
For an example of a map that satisfies specification, but is not asymptotically
entropy expansive and does not have an upper semi-continuous entropy functional
see  \cite[Page 952]{MR2322186}.

Another property that we need is sometimes called the small boundary property or the
existence of an essential partition. A metric space is {\df $\boldsymbol{d}$-dimensional}
(i.e.\  it has topological dimension $d$)
if there exists an open cover with arbitrarily fine diameter with the property that the intersection of
any $d+2$ distinct elements is empty. A self-homeomorphism, $T$, of a compact metric
space $X$ is said to have the
{\df small boundary property} if for each $\delta>0$, there exists a partition
$\mathcal P$ of $X$ such that $\mathrm{diam}(\mathcal P)<\delta$ and $\mu(\partial \mathcal P)=0$
for each $T$-invariant measure  $\mu$.
Notice that if $\mu(\partial \mathcal P)=0$, then $\mu(\partial(T^{-1}\mathcal P))=0$. Also if
$\mu(\partial \mathcal P)=0$ and $\mu(\partial\mathcal Q)=0$, then $\mu(\partial(\mathcal P
\vee \mathcal Q))=0$.

Both the  small boundary property and asymptotic
entropy expansiveness are important properties in the theory of symbolic extensions and
entropy structure; for more information see the recent book of Downarowicz \cite{MR2809170}
and the recent articles of  Boyle and Downarowicz  \cite{MR2047659},
Burguet \cite{Burguet},  Downarowicz
\cite{MR2177182, Downarowicz}, and  Lindenstrauss \cite{Lindenstrauss}.    We make use
of the following result of Kulesza.

\begin{lemma}[Kulesza \cite{Kulesza} Lemma 3.7]
\label{Kulesza}
A self-homeomorphism of a compact finite-dimensional metric space
with the property that the periodic points form a zero-dimensional set has the small boundary
property.
\end{lemma}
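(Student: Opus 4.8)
The plan is to deduce the small boundary property from a \emph{uniform} negligibility of partition boundaries. For $\delta>0$ I would produce a finite partition $\mathcal P$ with $\mathrm{diam}(\mathcal P)<\delta$ whose boundary has vanishing \emph{orbit capacity},
\[
\mathrm{ocap}(\partial\mathcal P):=\lim_{N\to\infty}\frac1N\,\sup_{x\in X}\#\bigl\{\,0\le k<N:\ T^kx\in\partial\mathcal P\,\bigr\}=0,
\]
the limit existing by subadditivity in $N$. This is enough: for $\mu\in M_T(X)$, invariance gives $\mu(\partial\mathcal P)=\frac1N\int\sum_{k<N}\mathbf 1_{\partial\mathcal P}(T^kx)\,d\mu(x)\le\frac1N\sup_x\#\{k<N:T^kx\in\partial\mathcal P\}$, and letting $N\to\infty$ yields $\mu(\partial\mathcal P)\le\mathrm{ocap}(\partial\mathcal P)=0$. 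Since a partition refining a finite open cover has boundary inside the union of the cover's boundaries, and $\mathrm{ocap}$ is monotone and finitely subadditive, it suffices to find, around each point, an open set of diameter $<\delta$ whose boundary has zero orbit capacity, take a finite subcover, and refine.

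The purely topological input is classical dimension theory: if $A,B\subset X$ are disjoint closed sets and $g\colon X\to[0,1]$ is continuous with $g\equiv 0$ on $A$ and $g\equiv 1$ on $B$, then by the Hurewicz--Wallman level-set theorem all but countably many fibres $g^{-1}(t)$ have topological dimension at most $d-1$, where $d=\dim X<\infty$. Hence any finite open cover can be refined, with the same mesh, to one whose boundary set has dimension $\le d-1$, and we are free to prescribe the cutting functions $g$. This suggests an induction on $d$. When $d\le 0$ the space has a basis of clopen sets, so small clopen partitions exist with empty boundary and hence zero orbit capacity; there is nothing to prove.

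The inductive step is where the dynamics, and the hypothesis, enter: a boundary set of dimension $d-1$ need not have small orbit capacity, so the cuts must be chosen transversally to the dynamics. Two mechanisms can force positive orbit capacity. First, a periodic orbit carries an invariant measure, so $\partial\mathcal P$ must be disjoint from the set $P=\bigcup_{n\ge1}\mathrm{Fix}(T^n)$ of periodic points; this is exactly where the hypothesis is used, since $P$ zero-dimensional (equivalently each closed set $\mathrm{Fix}(T^n)$ zero-dimensional, by the countable sum theorem) implies that $P$ can be covered by small open sets whose topological boundaries miss $P$, and near $P$ we cut along those. Second, codimension-one sets that are near-invariant: to avoid aligning cuts with such recurrent behaviour one uses that a finite-dimensional homeomorphism whose periodic points form a zero-dimensional set admits, for every $N$, a Rokhlin-type tower (an open base $U$ with $U,TU,\dots,T^{N}U$ pairwise disjoint whose $T$-orbit covers $X$ apart from periodic points of period $\le N$); building the $(d-1)$-dimensional cuts so that outside a thin region they are constant along such a tower, with the ``horizontal'' cuts within a floor handled by the inductive hypothesis, forces any length-$N$ orbit segment to meet $\partial\mathcal P$ only boundedly many times per passage through the tower, i.e.\ $o(N)$ times overall, so $\mathrm{ocap}(\partial\mathcal P)=0$. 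The main obstacle is this last step: simultaneously controlling the mesh, the dimension drop, and the tower alignment, and patching the clopen-type partition near $P$ to the tower-adapted partition on its complement. This is the technical heart of Kulesza's argument, and it hinges on the existence of the towers (the marker property) under precisely the stated hypothesis.
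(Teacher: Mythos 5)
The paper does not prove this lemma; it is imported verbatim from Kulesza \cite{Kulesza} (Lemma~3.7), so there is no in-paper argument against which to check yours -- the comparison has to be against Kulesza's own development. Your outline does correctly identify the ingredients that appear there and in the closely related Lindenstrauss--Weiss treatment: the reduction to vanishing orbit capacity of $\partial\mathcal P$ (your averaging bound $\mu(\partial\mathcal P)\le\mathrm{ocap}(\partial\mathcal P)$ is exactly right), the Hurewicz--Wallman-style dimension drop for the cuts, the induction on $\dim X$, the role of the zero-dimensional periodic-point set in letting you avoid periodic orbits, and the marker/tower construction on the aperiodic part.

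But this is a plan, not a proof. You explicitly flag that ``simultaneously controlling the mesh, the dimension drop, and the tower alignment, and patching the clopen-type partition near $P$ to the tower-adapted partition on its complement'' is the technical heart of the argument and leave it undischarged -- and that is precisely the content of Kulesza's Lemma~3.7, not a routine step. Without it, nothing shows that the length-$N$ orbit segments meet $\partial\mathcal P$ only $o(N)$ times, which is the whole point. Two subsidiary issues also need care: (i) the clean ``all but countably many fibres $g^{-1}(t)$ have $\dim\le d-1$'' statement is not true for an arbitrary continuous $g$ on a $d$-dimensional compactum; what dimension theory gives you is the existence of a separating set of dimension $\le d-1$ between two disjoint closed sets, and one must work to arrange a one-parameter family of such cuts compatible with the tower; (ii) the existence of the Rokhlin-type open towers for homeomorphisms of finite-dimensional compacta with zero-dimensional periodic sets (the marker property) is itself a nontrivial lemma and should be cited or proved, not asserted. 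So as a reconstruction of the cited result, the proposal is on the right track but incomplete at exactly the step that carries the weight.
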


Krieger \cite{kri-icm} raised the question of when a homeomorphism of a compact metric space is
universal.  We have the following partial answer.

\begin{theorem}
\label{result-gen}
A  self-homeomorphism of a compact metric space is fully universal whenever it satisfies:
\begin{enumerate}
\item
\label{aws-spec}
almost weak specification;
\item
\label{cond-a-h}
asymptotic entropy expansiveness; and
\item
\label{small-b-c}
the small boundary property.
\end{enumerate}
\end{theorem}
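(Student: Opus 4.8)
The plan is to follow the Burton–Rothstein strategy advertised in the introduction: rather than constructing an embedding of a given system $(\X,\mu,T)$ directly, one shows that the set of \emph{all} suitable embeddings is a dense $G_\delta$ in an appropriate complete metric space, and in particular nonempty. The natural ambient object is the space $\mathcal{M}$ of $S$-invariant Borel probability measures $\nu$ on $Y$ with $h_\nu(S) > h_\mu(T)$ together with a marked isomorphism (in the measure-theoretic sense) of $(\X,\mu,T)$ with $(Y,\nu,S)$, or equivalently the space of equivariant measurable maps $\Psi\colon\X\to Y$ whose push-forward $\Psi_*\mu$ has entropy exceeding $h_\mu(T)$. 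One metrizes this space so that it is complete; here is where the weak$^*$-metric alluded to in Lemma~\ref{weak-metric} and the upper semi-continuity of the entropy functional (Lemma~\ref{lem:Misiurewicz}, available because of hypothesis~(\ref{cond-a-h})) enter. One then identifies two ``good'' properties of a map $\Psi$: (a) $\Psi$ is \emph{injective} on a set of full measure, and (b) $\Psi_*\mu$ is \emph{fully supported} on $Y$. The crux is to show each of these is a $G_\delta$ condition that is \emph{dense}, so that by Baire category a $\Psi$ enjoying both exists, giving a fully universal embedding.

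\medskip

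First I would reduce to a symbolic model up to a set of measure zero. Using hypothesis~(\ref{small-b-c}), choose for each $m$ a partition $\mathcal{P}_m$ of diameter less than $1/m$ with $\mu'(\partial\mathcal{P}_m)=0$ for every $S$-invariant $\mu'$; refining, one may take the $\mathcal{P}_m$ increasing and generating in the appropriate sense. Because the partition boundaries are null for every invariant measure, the induced coding map $Y \to \prod_m (\text{symbols})^{\Z}$ is a Borel isomorphism off a universally null set, and crucially it is continuous on the relevant full-measure sets, so that weak$^*$ convergence of measures on $Y$ corresponds to convergence of the coded processes. This is the step that lets us pretend $S$ is a subshift, which is what makes the Burton–Rothstein machinery run. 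Asymptotic entropy expansiveness is what guarantees that passing to this symbolic picture does not lose entropy in the limit (upper semi-continuity), so that the entropy constraint $h_\nu(S) > h_\mu(T)$ is preserved under the limiting operations.

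\medskip

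The heart of the argument is the density step, and this is where I expect the main obstacle. To show the injective maps are dense one must, given any equivariant $\Psi$ with a little entropy to spare and any $\e > 0$, produce an equivariant $\Psi'$ with $\metric(\Psi,\Psi') < \e$ (in the chosen metric) that is ``more injective,'' and iterate. This is a Rokhlin-tower / marker-and-filler construction: one uses a Rokhlin tower in $(\X,\mu,T)$ of large height, reserves a sparse set of coordinates along each column to encode extra information distinguishing $\mu$-a.e.\ pair of points, and fills in the values using \emph{almost weak specification} (hypothesis~(\ref{aws-spec})) to glue the prescribed blocks into a genuine orbit in $Y$ while staying $\e$-close. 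The sublinearity $L_\e(m)/m \to 0$ of the gap function is exactly what makes the entropy cost of the gaps negligible, so the perturbed measure still satisfies the strict entropy inequality. The full-support density step is analogous but easier: one perturbs $\Psi$ so that $\Psi_*\mu$ charges every ball, again using specification to splice in, on a sparse set of tower levels, orbit segments that shadow a chosen $\e$-dense subset of $Y$. Both conditions are plainly $G_\delta$: injectivity off a null set is a countable intersection over $m$ of the open conditions ``$\mu\{(\omega,\omega') : \metric(\omega,\omega')\ge 1/m,\ \metric_k(\Psi\omega,\Psi\omega') < 1/m \text{ for } |k|\le N\} < 1/m$'' for suitable $N=N(m)$, and full support is a countable intersection over a basis of balls of the open conditions ``$\Psi_*\mu(B) > 0$.''

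\medskip

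Finally, assembling: the set of equivariant $\Psi$ with $h_{\Psi_*\mu}(S) > h_\mu(T)$, metrized as above, is a nonempty complete metric space (nonempty by the Krieger/Jewett–Krieger-type realization of \emph{some} invariant measure of the right entropy on $Y$, which specification provides); within it the injective maps and the full-support maps are each dense $G_\delta$; by Baire their intersection is nonempty; any element of the intersection is, by the a.e.\ injectivity, an embedding in the sense of the introduction, and has fully supported push-forward, so $(Y,S)$ is fully universal. The delicate points to get right in the write-up are (i) the precise completion/metric making the space of maps complete while keeping entropy upper semi-continuous, and (ii) the Rokhlin-tower construction in the density lemma, where one must simultaneously respect the gap function, control the metric perturbation, and preserve the entropy surplus — this is the step I would budget the most space for.
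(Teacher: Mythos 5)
Your high-level plan (Burton--Rothstein, Baire category, Rokhlin towers plus almost weak specification for density, entropy handled via asymptotic entropy expansiveness) is the right family of ideas, but the ambient Baire space you propose is broken, and this is not a notational detail: it is exactly the point of the Burton--Rothstein device. You propose to work in ``the space of equivariant measurable maps $\Psi\colon\X\to Y$ whose push-forward $\Psi_*\mu$ has entropy exceeding $h_\mu(T)$.'' But $\Psi_*\mu$ is a factor of $\mu$ under $\Psi$, so $h_{\Psi_*\mu}(S)\le h_\mu(T)$ always; the space you define is empty. If you instead drop the strictness and try to work with equivariant maps $\Psi\colon\X\to Y$ directly, you have a circularity: you would need to know such maps exist to get a nonempty complete space, and their existence is essentially the theorem. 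The fix, and the heart of the paper's proof, is to work in the space $\mathcal M_0$ of \emph{ergodic $\mu$-joinings} $\xi$ of $(X,T,\mu)$ and $(Y,S)$ with $h_{\pi_2^*(\xi)}(S)\ge h_\mu(T)$ (Lemma~\ref{baire-lemma}). Here no constraint is placed on the $Y$-marginal, so the $Y$-marginal entropy can genuinely exceed $h_\mu(T)$ (general joinings are not graph joinings), the space is nonempty and completely metrizable more or less for free, and Proposition~\ref{rue} is what converts a ``good'' joining (one satisfying the two $\sigma$-algebra containment conditions) into an actual embedding a posteriori. The ``approximate embeddings'' $E_\mu^n$ of Proposition~\ref{enl-open} are a precise $G_\delta$ description of these containments at each scale, and this is where the work is: openness is not ``plain'' and uses the small boundary property (since weak$^*$-continuity of $\xi'\mapsto\xi'(C)$ only holds at measures giving $\partial C$ measure zero).

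Relatedly, you misidentify the role of the small boundary hypothesis. You propose to use it to code $Y$ into a symbolic model and then run a symbolic argument; the paper instead codes the \emph{source} $X$ to a subshift via Krieger's generator theorem (Lemma~\ref{kriegerized}), and uses small boundary on $Y$ only to make the approximate-embedding conditions \emph{open} in the weak$^*$ topology (proof of Proposition~\ref{enl-open}~\eqref{open}). Your proposed symbolic coding of $Y$ would require the partitions from the small boundary property to be generating across \emph{all} invariant measures (not given), and would in any case destroy the specification hypothesis on $Y$, which the density step depends on. The sketch of the density step (Rokhlin tower plus splicing via almost weak specification, with the sublinear gap function making the splicing negligible) is in the right spirit, but the paper's version is considerably more delicate: one must first perturb to make the marginal entropy \emph{strictly} greater than $h_\mu(T)$ (Lemma~\ref{up}), then use Brin--Katok and a Hall's marriage argument to build the dictionary from $X$-blocks to $Y$-orbit segments (Section~\ref{section-marr}), and carry out a careful entropy accounting to verify the entropy of the $Y$-marginal does not drop below $h_\mu(T)$ after the perturbation.
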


Notice that Theorem \ref{result-sym} follows immediately from Theorem \ref{result-gen}.
  Let us also remark that in Theorem \ref{result-gen} we may  replace condition \eqref{cond-a-h} with the weaker condition that the entropy functional is upper semi-continuous.

Benjy Weiss \cite{Weiss-person2} has informed us that he has made progress on Conjecture \ref{con}; in particular, he has generalized Theorem \ref{result-gen} to hold with only conditions \eqref{aws-spec} and \eqref{small-b-c}, and also has a version for $\Z^d$-actions.  For background on universality for $\Z^d$-actions see \cite{MR1844076}.

\section{Examples}
\label{section-examples}

\subsection{Subshifts}
\label{subshifts}

We call a subshift
{\df non-trivial} if it does not consist of a finite set of points.
Subshifts have a natural generating
partition given by $\mathcal P:= \ns{P_1, \ldots P_n}$, where $P_i = \ns{x \in X: x_0 =i}$ for all $1 \leq i \leq n$.

Familiar examples of subshifts that satisfy specification are  mixing
subshifts of finite type, and mixing sofic subshifts.
However, there are many subshifts that do not satisfy almost weak specification,
but are still universal.

\begin{corollary}
\label{cor-sym}
If a subshift contains subshifts that satisfy almost weak specification and have topological entropy
arbitrarily close to the original topological entropy, then it is universal.
\end{corollary}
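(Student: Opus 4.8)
The plan is to reduce universality of the ambient subshift $X$ to the already-established universality of its subsystems with almost weak specification, using the fact that Theorem \ref{result-sym} applies to each such subsystem and that entropy is the only obstruction to embedding. Let $(X,S)$ be a subshift on finitely many symbols, and suppose $(\Omega,\mu,T)$ is an invertible non-atomic ergodic system with $h_\mu(T) < h_{\mathrm{top}}(S,X)$. The hypothesis gives a subshift $W \subseteq X$, invariant under $S$, satisfying almost weak specification with $h_{\mathrm{top}}(S,W) > h_\mu(T)$; concretely, choose $W$ so that $h_{\mathrm{top}}(S,W)$ lies strictly between $h_\mu(T)$ and $h_{\mathrm{top}}(S,X)$, which is possible since the entropies of such subsystems come arbitrarily close to $h_{\mathrm{top}}(S,X)$.

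Next I would invoke Theorem \ref{result-sym}: since $W$ is a subshift on a finite alphabet (being a subsystem of $X$) with almost weak specification, it is fully universal. Because $h_\mu(T) < h_{\mathrm{top}}(S,W)$, there is an embedding $\Psi : \Omega \to W$ of $(\Omega,\mu,T)$ into $(W,S|_W)$. Composing with the inclusion $\iota : W \hookrightarrow X$, the map $\iota \circ \Psi : \Omega \to X$ is measurable, injective on a set of full measure, and intertwines $T$ with $S$, since $\iota$ is the identity on points and $S|_W$ is just the restriction of $S$. Hence $\iota \circ \Psi$ is an embedding of $(\Omega,\mu,T)$ into $(X,S)$. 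As $(\Omega,\mu,T)$ was an arbitrary system of entropy below $h_{\mathrm{top}}(S,X)$, the subshift $(X,S)$ is universal.

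I do not expect a serious obstacle here: the corollary is essentially a monotonicity/transitivity observation once Theorem \ref{result-sym} is in hand. The only point requiring a word of care is the existence of a subsystem $W$ with entropy \emph{strictly} between $h_\mu(T)$ and $h_{\mathrm{top}}(S,X)$ — but this is immediate from the hypothesis that the subsystems with almost weak specification have entropy arbitrarily close to $h_{\mathrm{top}}(S,X)$, so one can simply pick one whose entropy exceeds $\max\{h_\mu(T), h_{\mathrm{top}}(S,X) - \eta\}$ for small $\eta>0$. Note also that we obtain universality rather than full universality, as expected, since the pushforward measure is supported on $W$, which is in general a proper (and possibly nowhere dense) subset of $X$.
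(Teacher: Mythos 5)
Your proof is correct and is precisely the unpacking of the paper's one-line argument (``Immediate from Theorem \ref{result-sym} and the definition of universality''): pick a subsystem $W$ with almost weak specification whose entropy exceeds $h_\mu(T)$, embed into $W$ via Theorem \ref{result-sym}, and include into $X$. Your closing remark that only universality, not full universality, can be obtained this way is also accurate.
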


\begin{proof}
Immediate from Theorem \ref{result-sym} and the definition of universality.
\end{proof}

\subsection{Toral automorphisms}

Lind \cite[Section 3]{lind-spec} classifies hyperbolic and
quasi-hyperbolic toral automorphisms into
three disjoint classes according to the spectral properties of the associated matrix.
These classes  correspond precisely to  the various forms of specification that were
defined in Section \ref{intro}.    Recall that
 the associated matrix for a hyperbolic
toral automorphism has no roots of unity, and hence all quasi-hyperbolic toral automorphisms
in dimensions two or three are hyperbolic.
Bowen \cite{Bowen2}  proved that hyperbolic
toral automorphisms satisfy specification. The associated matrix for a {\df central spin}
automorphism has some eigenvalues on the unit circle, and the Jordan blocks for these
eigenvalues have no off-diagonal $1$'s,  and all other quasi-hyperbolic toral
automorphisms are {\df central skew} automorphisms.    Lind \cite[Theorem (ii)]{lind-spec}
proved that central spin automorphisms (which only occur in dimensions $4$ or higher)
satisfy weak specification, but never satisfy specification.
Lind \cite[Theorem (iii)]{lind-spec} also proved that central skew automorphisms
(which only occur in dimensions $8$ or higher) never satisfy weak specification,
but nevertheless Marcus \cite{Marcus} proved that they still must satisfy almost
weak specification.     It is easy to give explicit examples of all the different
types of quasi-hyperbolic toral automorphisms (see for example \cite[ Section 3]{lind}).

\begin{proof}[Proof of Theorem \ref{result-toral}]
By Lemma \ref{lem:Misex}, we know that toral automorphisms are  entropy expansive.
Since we assume quasi-hyperbolicity, almost weak specification is satisfied \cite{Marcus},
and furthermore an easy linear algebra argument shows the periodic points are exactly
the points on the torus with rational coordinates; it is an elementary fact
\cite[Proposition 1.2.4]{engelking} that countable sets have zero-dimension.
Thus Theorem \ref{result-toral} follows from Lemmas \ref{toral-bowen} and \ref{Kulesza} and  Theorem \ref{result-gen}.
\end{proof}

We will use the following elementary factorization lemma to prove Theorem \ref{result-toral-gen}.

\begin{lemma}
\label{lin-alg}
Any toral automorphism is the direct product of a quasi-hyperbolic toral automorphism
and a toral automorphism of zero topological entropy.
\end{lemma}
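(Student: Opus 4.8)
The plan is to diagonalize the integer matrix $A$ defining the toral automorphism as much as possible over the rationals, isolating the part of the spectrum that consists of roots of unity. First I would pass to the characteristic polynomial $\chi_A(t) \in \Z[t]$ and factor it as $\chi_A = p \cdot q$, where $p$ is the product of the cyclotomic factors (those irreducible factors whose roots are roots of unity) and $q$ collects all remaining irreducible factors; since $\gcd(p,q) = 1$ in $\Q[t]$, the kernels $V_1 = \ker p(A)$ and $V_2 = \ker q(A)$ give a direct sum decomposition $\Q^n = V_1 \oplus V_2$ into $A$-invariant rational subspaces. The key point is to upgrade this to a decomposition of $\Z^n$ up to finite index, or better, to realize the splitting at the level of the torus $\T^n = \R^n/\Z^n$.

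The cleanest way to carry this out is to choose an integer basis adapted to the filtration. Let $\Lambda_1 = \Z^n \cap V_1$ and $\Lambda_2 = \Z^n \cap V_2$; these are $A$-invariant sublattices, and $\Lambda_1 \oplus \Lambda_2$ has finite index in $\Z^n$. Passing to the quotient $\T^n \to \T^n / (\Lambda_1 \oplus \Lambda_2)^\perp$-type constructions, or more simply observing that $\T^n$ is isogenous (via a finite-to-one algebraic group homomorphism) to $(V_1/\Lambda_1) \times (V_2/\Lambda_2)$, one gets that $(\T^n, A)$ is a finite group extension — indeed a measurable and topological factor with finite fibers, hence an isomorphism after a bounded quotient — of the product system $(V_1/\Lambda_1, A_1) \times (V_2/\Lambda_2, A_2)$. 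Here I should be slightly careful: the statement claims a genuine direct product, so I would instead argue that one can choose the original lattice decomposition to be exact, i.e. that $(\T^n,A)$ itself splits as $(\T^{n_1}, A_1) \times (\T^{n_2}, A_2)$ up to conjugacy by a $\mathrm{GL}_n(\Z)$ change of coordinates bringing $A$ to block-diagonal form — this is where rational canonical form over $\Z$ and the coprimality of $p$ and $q$ are used, and the price of non-exactness is at worst absorbed into relabelling which torus is which, since a finite extension of a quasi-hyperbolic automorphism is again quasi-hyperbolic and a finite extension of a zero-entropy automorphism still has zero entropy.

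It then remains to identify the two factors. On $V_1/\Lambda_1$ the matrix $A_1$ has only roots of unity as eigenvalues, so it is not quasi-hyperbolic; I claim it has zero topological entropy. This follows because the topological entropy of a toral automorphism equals $\sum_{|\lambda_i| > 1} \log|\lambda_i|$ (Lind's formula, or the variational principle together with the computation of the Haar-measure entropy), and all eigenvalues of $A_1$ have modulus $1$. On $V_2/\Lambda_2$ the matrix $A_2$ has no roots of unity among its eigenvalues by construction, so it is quasi-hyperbolic. Hence $(\T^n, A) \cong (\T^{n_2}, A_2) \times (\T^{n_1}, A_1)$ with the first factor quasi-hyperbolic and the second of zero entropy, as required.

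The main obstacle I anticipate is the integrality bookkeeping: the decomposition $\Q^n = V_1 \oplus V_2$ is immediate, but producing an honest $\mathrm{GL}_n(\Z)$-conjugacy that block-diagonalizes $A$ (rather than merely an isogeny) requires care, since $\Lambda_1 \oplus \Lambda_2$ need not equal $\Z^n$. I would handle this either by the standard fact that when $\chi_A$ factors into coprime pieces the module $\Z^n$ over $\Z[A] \cong \Z[t]/(\chi_A)$ splits accordingly after localizing away from the (finitely many) bad primes, and then arguing that the resulting finite-index issue does not affect the conclusion because the classes "zero entropy" and "quasi-hyperbolic" are each closed under finite algebraic extensions; or, more cheaply for the purposes of Theorem \ref{result-toral-gen}, noting that universality only needs a measure-theoretic factor/extension structure, for which the isogeny already suffices. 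Everything else — the entropy formula and the characterization of ergodicity via absence of roots of unity — is quoted from the references already cited in the excerpt.
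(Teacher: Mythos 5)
You follow the same route as the paper (split $\chi_A$ into its cyclotomic part $g$ and its non\nobreakdash-cyclotomic part $h$ and invoke the entropy formula $h_{\mathrm{top}} = \sum_{|\lambda_i|\geq 1}\log|\lambda_i|$), and you have correctly flagged the delicate step. Your worry about $\Lambda_1\oplus\Lambda_2$ failing to exhaust $\Z^n$ is not just bookkeeping: the ``fact'' the paper records, that a factorization $\chi_A=gh$ with $\gcd(g,h)=1$ over $\Z$ forces a genuine direct product of toral automorphisms, is false, and so is the lemma as literally stated. Take $A$ to be the companion matrix of $(t-1)(t^2-3t-1)=t^3-4t^2+2t+1$, a valid element of $\mathrm{GL}_3(\Z)$. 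The resultant of $t-1$ and $t^2-3t-1$ is $-3$, so $\Lambda_g\oplus\Lambda_h$ has index $3$ in $\Z^3$. Since any $A$-invariant direct summand of $\Z^3$ with characteristic polynomial $g$ (resp.\ $h$) must be a saturated sublattice of the corresponding rational eigenspace, it is forced to equal $\Lambda_g$ (resp.\ $\Lambda_h$), so no $\mathrm{GL}_3(\Z)$-conjugate of $A$ is block-diagonal with a unipotent $1\times1$ block and a quasi-hyperbolic $2\times2$ block.

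Of your three proposed patches, the first two do not close the hole: localizing kills the $\Z$\nobreakdash-structure that defines the torus, and closure of each class under finite extensions is beside the point because what is missing is the product structure itself, not membership in a class. The third instinct, that Theorem~\ref{result-toral-gen} actually needs less, is correct, but it deserves a precise formulation. The natural map $\pi:(V_g/\Lambda_g)\times(V_h/\Lambda_h)\to\T^d$, $(\bar v_g,\bar v_h)\mapsto\overline{v_g+v_h}$, is a finite-to-one surjective group homomorphism intertwining $A_g\times A_h$ with $A$; and it is \emph{injective on the slice} $\{0\}\times(V_h/\Lambda_h)$, because if $v_h\in V_h$ satisfies $v_h\in\Z^d$ then $v_h\in\Z^d\cap V_h=\Lambda_h$. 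Hence any embedding $\Psi$ of $(X,\mu,T)$ into the quasi-hyperbolic factor $(V_h/\Lambda_h,A_h)$ yields the embedding $x\mapsto\pi(0,\Psi(x))$ into $(\T^d,A)$, which is all the proof of Theorem~\ref{result-toral-gen} uses. So the repair is to replace the lemma's ``direct product'' conclusion by the weaker statement that $(\T^d,A)$ is a finite algebraic factor of such a product via a map injective on the quasi-hyperbolic slice, or simply to inline this observation into the proof of Theorem~\ref{result-toral-gen}.
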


\begin{proof}
The lemma is a consequence of the following facts.    The topological  entropy of a toral
automorphism is given by the sum of the logarithms of the moduli of the associated
eigenvalues of modulus at least one (see for example \cite[Corollary 16]{bowen1971}).
A toral automorphism with a characteristic polynomial $f$  that can be expressed as a
product $f=gh$ such that $g$ and $h$ are polynomials over $\Z$ with $\gcd(g,h) =1$
is the direct product of toral automorphisms with characteristic polynomials $g$ and $h$.
A polynomial over $\Z$ with a root of unity is the product of a polynomial over $\Z$ with
only roots of unity and a polynomial over $\Z$ without roots of unity.
\end{proof}

\begin{proof}[Proof of Theorem \ref{result-toral-gen}]
Let $S$ be an automorphism of the $d$-dimensional torus $\mathbb{T}^d$.   By Lemma
\ref{lin-alg}, let $S=  S^q \times S^o$, where $S^q$ is a quasi-hyperbolic toral automorphism
of a $d_1$-dimensional torus $\mathbb{T}^{d_1}$  with the same topological entropy as $S$,
and $S^o$ is a zero entropy toral automorphism of a $d_2$-dimensional torus $\mathbb{T}^{d_2}$.
Note that if $T$ is an invertible  ergodic-measure preserving transformation of a probability space
$(\X, \mu)$ and $\Psi$ is an embedding of $(\X, \mu, T)$ into $(\mathbb{T}^d,S^q)$, then the map
$\Psi'(y,z) :=  (\Psi(y), 0)$ gives an embedding of $(X, \mu,T)$ into $(\mathbb{T}^{d_1} \times
\mathbb{T}^{d_2}, S^q \times S^o)$, since $0$ is a fixed point of any toral automorphism.
Thus the universality of $(\mathbb{T}^d, S)$ follows from the universality of $(\mathbb{T}^{d_1},S^q)$
and Theorem \ref{result-toral}.
\end{proof}

We give the following easy application of Theorem \ref{result-toral-gen}.
Let   $T$ be a $C^{1+ \delta}$ ($\delta >0$) diffeomorphism of a compact smooth manifold.   Let
\[
E(T):=\ns{h_{\mu}(T) : \mu\text{ is an ergodic invariant measure for $T$}}.
\]
Katok conjectures  $E(T) \supset [0, h_\mathrm{top}(T))$ and proved this to be true  in the case where $T$ is
a diffeomorphism of a two-dimensional surface \cite{Sun,MR804774}; the proof follows immediately
from   \cite[Theorem 4]{MR804774}.   Recently, Sun \cite{Sun}
showed that $E(T)$ is dense in $[0, h_{\mathrm{top}}(T)]$ for a toral automorphism $T$;
Theorem \ref{result-toral-gen} extends the result of Sun and  implies that Katok's conjecture is
true for toral automorphisms.

\subsection{Automorphisms of a compact metric abelian group}

Toral automorphisms form an important class of examples of automorphisms of compact metric
abelian groups.  Note that an
automorphism always preserves the Haar measure.   Generalizing the earlier result of
Marcus \cite{Marcus}, Dateyama \cite{Dateyama} proved that an automorphism of a compact
metric abelian group satisfies almost weak specification if and only
if it is ergodic with respect to the Haar measure.

Ergodicity in the setting of an automorphism $S$ of a compact metric abelian group $Y$
still has an algebraic characterization.   Recall that the character group of $Y$
is the set of all continuous group homomorphisms of $Y$ into the complex unit circle
with multiplication and denoted by $\hat{Y}$.   Let $U_S:L^2 \to L^2$ be the Koopman
representation given by $U_S(\chi) = \chi \circ S$, where $L^2$ is the set of all
square integrable functions on $Y$ with respect to Haar measure.   Since $S$ is
also a group homomorphism, we have that $U_S : \hat{Y} \to \hat{Y}$.   Halmos
\cite{Halmos-autos} proved that $S$ is ergodic with respect to Haar measure if and only if
$U_S$ has no finite orbits other than the trivial character; that is for all
$\chi \in \hat{Y}$ and all $n \in \Z^{+}$, we have  $U_S^n(\chi) =\chi$ if and
only if $\chi=1$.

\begin{theorem}
\label{result-group}
An automorphism of a compact finite-dimensional metric abelian group with finite topological entropy
whose Koopman representation  has no finite orbits on the character group, other than the trivial
character, satisfies almost weak specification and is universal.
\end{theorem}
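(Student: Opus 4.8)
The plan is to derive Theorem~\ref{result-group} from Theorem~\ref{result-gen} by verifying its three hypotheses for an automorphism $S$ of a finite-dimensional compact metric abelian group $Y$ with finite topological entropy whose Koopman representation $U_S$ has no finite orbit on $\hat Y$ apart from the trivial character.

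Condition~\eqref{aws-spec} is essentially free: by Halmos's criterion~\cite{Halmos-autos} the hypothesis on $U_S$ is exactly ergodicity of $S$ with respect to Haar measure, and Dateyama's theorem~\cite{Dateyama} then delivers almost weak specification. For condition~\eqref{small-b-c} I would apply Kulesza's Lemma~\ref{Kulesza}: since $Y$ is finite-dimensional it suffices to show the periodic points form a zero-dimensional set. For each $n\ge 1$ the set $\mathrm{Fix}(S^n)$ is the closed subgroup $\ker(S^n-\mathrm{id})$, and Pontryagin duality identifies its dual group with $\hat Y/(U_S^n-\mathrm{id})\hat Y$ (the dual of $S^n$ being $U_S^n$). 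The hypothesis on $U_S$ says $U_S^n-\mathrm{id}$ is injective on $\hat Y$, and finite-dimensionality of $Y$ forces $\hat Y$ to have finite torsion-free rank, so the image of $U_S^n-\mathrm{id}$ has full rank and the quotient $\hat Y/(U_S^n-\mathrm{id})\hat Y$ is a torsion group; hence its dual $\mathrm{Fix}(S^n)$ is totally disconnected, so zero-dimensional. The periodic point set is the countable union $\bigcup_{n\ge1}\mathrm{Fix}(S^n)$ of closed zero-dimensional sets, hence zero-dimensional by the countable sum theorem for covering dimension~\cite{engelking}, and Lemma~\ref{Kulesza} then gives the small boundary property.

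The remaining condition~\eqref{cond-a-h} --- asymptotic entropy expansiveness, or, by the remark after Theorem~\ref{result-gen}, upper semi-continuity of the entropy functional --- is the heart of the matter and the step I expect to be the main obstacle, since it is the point where Bowen's Lemma~\ref{toral-bowen} for tori must be generalized. The approach would be to imitate Bowen's argument~\cite{Bowen}: split $Y$ along its (characteristic) identity component $Y^0$ and the totally disconnected quotient $Y/Y^0$, each of which carries an automorphism of finite entropy by the addition formula, and inside each piece isolate a closed ``neutral'' subgroup that absorbs all the non-expansive behaviour of $S$ --- arising from archimedean eigenvalues of modulus one and from the finitely many $p$-adic places (finitely many because $\hat Y$ has finite rank and $S$ has finite entropy) at which $S$ acts by $p$-adic units. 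Equipping this neutral subgroup with an $S$-invariant metric built from the archimedean metric together with those $p$-adic metrics, $S$ acts on it as an isometry, so each set $\Gamma_\delta(x)$ lies in a coset of the neutral subgroup and therefore has zero topological entropy, giving entropy expansiveness. Making the neutral decomposition and the equivariant metric precise in this generality --- not merely for solenoids or connected groups --- is where the real work lies; once it is in place, Theorem~\ref{result-gen} applies and $S$ is (fully) universal.
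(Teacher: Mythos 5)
Your approach for almost weak specification and the small boundary property is sound, but your treatment of asymptotic entropy expansiveness --- which you correctly flag as the main open step in your plan --- is precisely where the paper invokes an off-the-shelf result: Lemma~\ref{lem:Misex} (Misiurewicz~\cite{Misiurewicz2}, Example~7.1) states that \emph{any} endomorphism of a compact group with finite topological entropy is asymptotically entropy expansive. This makes condition~\eqref{cond-a-h} essentially free rather than the heart of the matter. Your sketch of a Bowen-style argument via a ``neutral'' $S$-invariant subgroup and an equivariant metric built from archimedean and $p$-adic places is plausible in spirit --- and would more or less recover Misiurewicz's proof --- but as written it is left as a plan, not a proof, and so remains a genuine gap in your version.

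On the small boundary property you and the paper take genuinely different routes into Kulesza's Lemma~\ref{Kulesza}. The paper cites Miles~\cite[Lemma~4.3]{miles-per}, which gives that an ergodic finite-entropy automorphism of a finite-dimensional compact abelian group has only \emph{finitely} many points of each period; the periodic set is therefore countable, hence zero-dimensional by~\cite[Proposition~1.2.4]{engelking}. Your argument is more elementary and more self-contained: $\mathrm{Fix}(S^n)=\ker(S^n-\mathrm{id})$ has Pontryagin dual $\hat Y/(U_S^n-\mathrm{id})\hat Y$; the hypothesis on the Koopman representation makes $U_S^n-\mathrm{id}$ injective, finite-dimensionality of $Y$ gives $\hat Y$ finite torsion-free rank, so the quotient is torsion and $\mathrm{Fix}(S^n)$ is profinite, hence zero-dimensional; the countable closed sum theorem for covering dimension then handles the union over $n$. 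This is correct and buys you two things: you avoid reliance on Miles's formula, and you do not need the finite-entropy hypothesis at this step (you only need it for asymptotic entropy expansiveness). The trade-off is that you need the countable sum theorem rather than mere countability of the periodic set.

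In summary: two of the three hypotheses are verified correctly (one of them by a different and arguably nicer argument than the paper's), but the third is only sketched; the citation that closes that gap is Lemma~\ref{lem:Misex}.
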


The proof of Theorem \ref{result-group} is similar to the proof of Theorem \ref{result-toral}
making use of slightly more technical tools to verify the conditions of Theorem \ref{result-gen}.
The following lemma substitutes entropy expansiveness with
asymptotic entropy expansiveness.

\begin{lemma}[Misiurewicz  \cite{Misiurewicz2}  Example 7.1]
\label{lem:Misex}
An endomorphism of a compact group with finite topological entropy 
is asymptotically entropy expansive.
\end{lemma}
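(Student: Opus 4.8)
The plan is to reduce the statement to a standard fact about topological entropy on a compact group, namely that when $T$ is a continuous endomorphism of a compact metrizable group $X$ with an invariant metric, the entropy is concentrated near the identity in the sense that $h_{\mathrm{top}}(T,X) = h_{\mathrm{top}}(T,\Gamma_\delta(e))$ does not hold, but rather that the "tails" $\Gamma_\delta(x)$ shrink in entropy as $\delta\to0$. First I would fix a translation-invariant metric $\metric$ compatible with the topology (this exists since $X$ is a compact metrizable group). The key observation is that translation invariance makes the sets $\Gamma_\delta(x)$ essentially independent of $x$: if $y\in\Gamma_\delta(x)$, write $z = y x^{-1}$ (so $z$ lies in the $\delta$-neighborhood of $e$ in an appropriate sense, using invariance to pass between $\metric(T^j x, T^j y)$ and $\metric(e, T^j(yx^{-1}))$ when $T$ is a homomorphism). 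This lets me identify $\Gamma_\delta(x)$ with a left translate of a single set $\Delta_\delta := \{z\in X : \metric(e, T^j z) < \delta \text{ for all } j \ge 0\}$, and translations are isometries, hence entropy-preserving, so $h_{\mathrm{top}}(T,\Gamma_\delta(x)) = h_{\mathrm{top}}(T,\Delta_\delta)$ for all $x$. Therefore $\sup_{x\in X} h_{\mathrm{top}}(T,\Gamma_\delta(x)) = h_{\mathrm{top}}(T,\Delta_\delta)$, and asymptotic entropy expansiveness reduces to showing $h_{\mathrm{top}}(T,\Delta_\delta)\to 0$ as $\delta\to 0$.

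Next I would establish that $\bigcap_{\delta>0}\Delta_\delta = \{z : T^j z = e \text{ for all } j\ge 0\}$, which is precisely $\bigcap_{j\ge0} \ker(T^j) = \ker(T^\infty)$ in the endomorphism case; call this closed subgroup $K$. The $\Delta_\delta$ form a decreasing family of closed $T^{+}$-invariant (forward-invariant) sets with intersection $K$. Since entropy of $T$ restricted to $K$: as $T|_K$ has $T^j|_K = 0$ eventually pointwise... actually the cleanest route is to invoke finiteness of $h_{\mathrm{top}}(T)$ together with upper semicontinuity of $A\mapsto h_{\mathrm{top}}(T,A)$ on the (compact) decreasing family $\Delta_\delta$: by the standard fact that $h_{\mathrm{top}}(T,\bigcap_n A_n) = \lim_n h_{\mathrm{top}}(T,A_n)$ for a decreasing sequence of compact invariant sets when the total entropy is finite (this is essentially Bowen's argument), we get $\lim_{\delta\to0} h_{\mathrm{top}}(T,\Delta_\delta) = h_{\mathrm{top}}(T,K)$. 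It then remains to show $h_{\mathrm{top}}(T|_K) = 0$. But on $K$ the map $T$ is "topologically nilpotent" in a uniform sense — more carefully, $K$ need not be all killed by a fixed power, so I would instead argue directly that $\Delta_\delta$ has zero entropy for every fixed $\delta$ by a spanning-set estimate, exploiting that points of $\Delta_\delta$ stay $\delta$-close to $e$ for all forward times, so the $(n,\e)$-spanning number of $\Delta_\delta$ is controlled by the $\e$-spanning number of the single neighborhood $B(e,\delta)$, independent of $n$, once $\delta$ is small relative to $\e$.

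I expect the main obstacle to be the last point: making precise why $h_{\mathrm{top}}(T, \Delta_\delta) = 0$ (or $\to 0$) rigorously. Naively, points in $\Delta_\delta$ stay in $B(e,\delta)$ for all forward iterates, but Bowen balls are defined by $\e$-closeness of \emph{pairs} of orbits, not closeness to $e$, so "staying near $e$" only directly bounds the spanning count when the metric's interaction with the group operation lets me say that two points both within $\delta$ of $e$ at time $j$ are within (say) $2\delta$ of each other at time $j$ — which invariance of the metric does give, via $\metric(T^jy, T^jz) \le \metric(T^jy, e) + \metric(e, T^j z)$. So for $\delta < \e/2$ any pair in $\Delta_\delta$ is automatically $(n,\e)$-close for all $n$, whence a single point $\e$-spans $\Delta_\delta$ for every $n$, giving $h_{\mathrm{top}}(T,\Delta_\delta) = 0$ for all $\delta < \e/2$; letting $\e\to0$ then yields $\sup_x h_{\mathrm{top}}(T,\Gamma_\delta(x))\to0$. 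The finite-topological-entropy hypothesis is not even needed for this direct argument — it merely guarantees the statement is non-vacuous — so I would present the spanning-set version and remark that finiteness of entropy is used only to place the system within the framework where asymptotic entropy expansiveness is the relevant notion. I would also need to double-check the reduction step handles the case where $T$ is an endomorphism (not automorphism), but since only forward orbits enter the definition of $\Gamma_\delta$ and of asymptotic entropy expansiveness, the homomorphism property of $T$ is all that is used.
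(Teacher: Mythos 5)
The paper treats Lemma~\ref{lem:Misex} as a citation to Misiurewicz and gives no proof, so there is nothing to compare against; but your attempt has a genuine error in the last step that invalidates the conclusion.

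Your reduction is fine: choosing a bi-invariant metric on the compact group makes
\[
\Gamma_\delta(x)=x\Delta_\delta,\qquad
\Delta_\delta:=\{z\in X:\metric(T^jz,e)<\delta\ \text{for all }j\ge 0\},
\]
and left translation is an $(n,\e)$-isometry, so $h_\mathrm{top}(T,\Gamma_\delta(x))=h_\mathrm{top}(T,\Delta_\delta)$ for every $x$. The problem is the final claim that ``a single point $\e$-spans $\Delta_\delta$ for every $n$, giving $h_\mathrm{top}(T,\Delta_\delta)=0$ for all $\delta<\e/2$.'' What you actually showed is that the $(n,\e)$-spanning number $r_n(\e,\Delta_\delta)$ equals $1$ for the \emph{particular} $\e>2\delta$. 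But Bowen's topological entropy of the set $\Delta_\delta$ is
\[
h_\mathrm{top}(T,\Delta_\delta)=\lim_{\e\to 0}\ \limsup_{n\to\infty}\frac{1}{n}\log r_n(\e,\Delta_\delta),
\]
and your estimate says nothing about $r_n(\e,\Delta_\delta)$ for $\e<2\delta$. Concluding ``entropy zero'' from a spanning bound at one fixed scale is exactly the mistake; the subsequent sentence ``letting $\e\to 0$ then yields \dots'' conflates the $\e$ that appears in the definition of $h_\mathrm{top}$ with an independently chosen parameter.

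Two further signals that something must be wrong with the conclusion as you stated it. First, if $h_\mathrm{top}(T,\Delta_\delta)=0$ held for all small $\delta$, the system would be \emph{entropy expansive}, which is strictly stronger than asymptotic entropy expansiveness. The paper deliberately distinguishes these: Bowen (Lemma~\ref{toral-bowen}) proves entropy expansiveness for toral automorphisms, whereas Misiurewicz's Example~7.1 proves only the asymptotic version for general compact groups, and the asymptotic version is the sharp statement there. Second, you claim the finiteness of $h_\mathrm{top}(T)$ is ``not even needed''; it is essential. The actual argument of Misiurewicz controls the tail entropy $h^*(T)=\lim_{\delta\to 0}h_\mathrm{top}(T,\Delta_\delta)$ via the structure theory of compact groups (a decreasing sequence of closed normal subgroups with Lie quotients), and uses $h_\mathrm{top}(T)<\infty$ to force the ``fiber'' contributions to vanish in the limit. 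Without finiteness the conclusion fails. So: keep the translation-reduction, but the estimate you need on $\Delta_\delta$ as $\delta\to 0$ requires a genuinely different device than a single-point spanning bound at a fixed scale, and it must use the finite-entropy hypothesis.
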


Finally, we also need to argue that the periodic points are countable.   We do not have an
elementary argument for this, and instead will refer to a recent paper of Miles
\cite{miles-per} who gives a formula for the number of periodic points of each finite order
of a ergodic finite entropy automorphism of a finite-dimensional compact abelian group; we will
not need to use the full force of this formula.

\begin{proof}[Proof of Theorem \ref{result-group}]
By Lemma \ref{lem:Misex}, we have asymptotic entropy expansiveness.   Since the Koopman
representation has no non-trivial finite orbits, we have that the automorphism is
ergodic with respect to Haar measure \cite{Halmos-autos} and thus satisfies almost weak
specification \cite{Dateyama}.     Since the automorphism is ergodic and has finite entropy,
by \cite[Lemma 4.3]{miles-per}, the number periodic points of each finite order is finite
thus the number of periodic points is countable and has zero-dimension
\cite[Proposition 1.2.4]{engelking}.   Theorem \ref{result-group}  now follows from
Theorem \ref{result-gen} and Lemma \ref{Kulesza}.
\end{proof}

\subsection{Halmos' invariant}

In his book \cite{Halmos-ergodic}, Halmos proposed an invariant of two-dimensional
toral automorphisms.   Let $T: X \to X$ be an invertible ergodic measure preserving transformation.   If $p(t)=\sum_k a_k t^k\in\Z[t]$ and $f:X \to \mathbb{T}$,  then let $p(T)[f]:X \to \mathbb{T}$ be given by   $\sum_k a_kf\circ T^k$.
Halmos noted that if $T:\mathbb{T}^2 \to \mathbb{T}^2$ is a toral automorphism, $f\colon \T^2\to\T$ is one of the coordinate functions, and  $p$ is the characteristic polynomial of the automorphism,
then $p(T)[f]=0$,
even though the function $f$ is a non-constant function mapping into $\mathbb T$.
Clearly the existence of a non-constant $\T$-valued $f$ such that $p(T)[f]=0$
almost everywhere is an isomorphism invariant. Halmos asked whether this invariant (with the
polynomials $p$ taken to be of the form $p(t)=t^2-at+1$) distinguishes
two-dimensional toral automorphisms with distinct characteristic polynomials.

A natural generalization
is to set $\poly(X,T,\mu):=\{p\in\Z[t]\colon$ $p(t)$ is irreducible over $\Z$ and
there exists a non-constant $\mathbb T$-valued $f$ such that $p(T)[f]=0$, $\mu$-a.e.$\}$.
Notice that for a reducible polynomial $q(t)\in\Z[t]$, there
is a non-constant $\T$-valued $f$ such that $q(T)[f]=0$ $\mu$-a.e. if and only if one of its irreducible
factors belongs to $\poly(X,T,\mu)$.

The following corollary establishes that $\poly(X,T,\mu)$ is no stronger  an invariant
than the rational spectrum (in the sense of its ability to
distinguish transformations).  We would like to thank Benjy Weiss \cite{Weiss-person1} for pointing
this corollary out to us, and for suggesting we include it in this paper.


\begin{corollary}[Weiss]\label{weiss}
Let $(X,T,\mu)$ and $(X',T',\mu')$ be ergodic invertible measure-pres\-erving transformations of
non-atomic  probability spaces. If $T$ and $T'$ have the same rational spectrum,
then $\poly(X,T,\mu)=\poly(X',T',\mu')$.
\end{corollary}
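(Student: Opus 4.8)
The plan is to show that $\poly(X,T,\mu)$ is determined by the rational spectrum of $T$; since $T$ and $T'$ are assumed to share a rational spectrum, this forces $\poly(X,T,\mu)=\poly(X',T',\mu')$. By symmetry it is enough to prove that every $p\in\poly(X,T,\mu)$ lies in $\poly(X',T',\mu')$.

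The first step is a reformulation of membership in $\poly$. Given an irreducible $p(t)=\sum_{k=0}^n c_k t^k\in\Z[t]$, let $W_p\subseteq\T^{\Z}$ be the closed subgroup of all $y=(y_k)_{k\in\Z}$ with $\sum_k c_k y_{\ell+k}=0$ for every $\ell$, and let $\sigma$ be the shift on $W_p$. The assertion to establish is: for every ergodic invertible measure-preserving system $(Y,R,\lambda)$ on a non-atomic space, $p\in\poly(Y,R,\lambda)$ if and only if $(Y,R,\lambda)$ admits a factor of the form $(W_p,\sigma,\nu)$ with $\nu$ not supported on constant sequences. One direction sends a non-constant $\T$-valued $f$ with $p(R)[f]=0$ to the orbit map $y\mapsto(f(R^ky))_k$, whose image lies in $W_p$; the other notes that if every coordinate function of such a factor map were constant the map would be constant, and that $p(\sigma)[y_k]\equiv 0$ on $W_p$ converts any non-constant coordinate $y_k$ into a non-constant $f:=y_k\circ(\text{factor map})$ with $p(R)[f]=0$. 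The character group of $W_p$ is $\Z[t^{\pm1}]/(p)$ with $t$ acting by multiplication; hence $W_p$ is a compact metrizable abelian group, of topological dimension equal to the torsion-free rank of $\Z[t^{\pm1}]/(p)$, namely $\deg p$, and of finite topological entropy (equal to $\log M(p)$, $M(p)$ the Mahler measure, by Yuzvinskii's formula). Note also that the constant sequences in $W_p$ form a finite set unless $1$ is a root of $p$, i.e.\ unless $p=\pm(t-1)$.

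I would then split on the nature of $p$. If $p=\pm t$ then $W_p=\{0\}$ and $p$ is never in $\poly$. If $p$ is cyclotomic, say $p=\pm\Phi_d$ with $d\ge 1$ — equivalently (Kronecker's theorem) $p$ is monic up to sign with all roots of modulus $\le1$ and $p\ne\pm t$ — then $\Phi_d\mid t^d-1$, so $\sigma$ has finite order $d$ on $W_p$ and its ergodic invariant measures are the uniform measures on finite $\sigma$-orbits; those not supported on constants are the ones on orbits of some size $e>1$, each of which, with its uniform measure, is isomorphic to $(\Z/e\Z,+1)$. Hence $p\in\poly(Y,R,\lambda)$ exactly when $(Y,R,\lambda)$ has a factor isomorphic to $(\Z/e\Z,+1)$ for some $e$ in the set $E_d:=\{e>1: W_{\Phi_d}\text{ has a }\sigma\text{-orbit of cardinality }e\}$, i.e.\ exactly when the rational spectrum of $R$ contains a primitive $e$th root of unity for some $e\in E_d$ (for $\Rightarrow$, a $\zeta_e$-eigenfunction, normalised to take values in the $e$th roots of unity, gives the cyclic factor; for $\Leftarrow$, one realises that cyclic factor inside the chosen $e$-orbit of $W_{\Phi_d}$). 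Since $E_d$ depends only on $d$, this condition holds for $R=T$ iff it holds for $R=T'$. In particular $\Phi_1=t-1\notin\poly$ for any system, consistently with ergodicity.

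Finally, if $p$ is irreducible, $p\ne\pm t$, and $p$ is non-cyclotomic, then $M(p)>1$ (else, by Kronecker again, $p$ would be $\pm t$ or cyclotomic), so $(W_p,\sigma)$ has finite positive topological entropy; moreover $\Z[t^{\pm1}]/(p)$ is an integral domain in which $t^n-1\ne0$ for every $n\ge1$ (as $p\nmid t^n-1$), so multiplication by $t^n-1$ is injective, i.e.\ the Koopman representation of $\sigma$ has no non-trivial finite orbits on the character group. By Theorem \ref{result-group}, $(W_p,\sigma)$ is universal. Therefore, for the given ergodic non-atomic $(X,T,\mu)$: if $h_\mu(T)<h_{\mathrm{top}}(W_p,\sigma)$, embed $(X,T,\mu)$ into $(W_p,\sigma)$; if $h_\mu(T)\ge h_{\mathrm{top}}(W_p,\sigma)>0$, use Sinai's factor theorem to extract a Bernoulli factor of entropy strictly between $0$ and $h_{\mathrm{top}}(W_p,\sigma)$ and embed that factor. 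In either case the pushed-forward measure on $W_p$ is non-atomic, hence not carried by the finitely many constant sequences, so $p\in\poly(X,T,\mu)$; but the identical argument gives $p\in\poly(Y,R,\lambda)$ for every ergodic non-atomic system, and in particular for $(X',T',\mu')$. I expect the main obstacle to lie in the cyclotomic case — pinning down $E_d$ and matching the existence of a $(\Z/e\Z,+1)$-factor with the presence of $\zeta_e$ in the rational spectrum — together with verifying that $W_p$, for non-monic $p$, genuinely meets the finite-dimensionality and finite-entropy hypotheses of Theorem \ref{result-group}; the appeals to universality and to Sinai's theorem are then routine.
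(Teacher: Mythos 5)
Your proposal is correct and shares with the paper its key devices: the subgroup $W_p\subseteq\T^\Z$ of solutions of $p(\sigma)y=0$ (which is exactly the paper's $I_p$), the observation that for non-cyclotomic irreducible $p\neq\pm1,\pm t$ the system $(W_p,\sigma)$ meets the hypotheses of Theorem~\ref{result-group} and is therefore universal, and the resulting conclusion that $\poly$ contains \emph{every} such $p$ regardless of the system. Where you differ is in the cyclotomic case. The paper proves the explicit identity
\[
\poly(X,T,\mu)=\big(\mathrm{NCI}\setminus\{\pm1,\pm t\}\big)\cup\bigcup_{m\in R(T)}\poly([m],S,c),
\]
showing $\Phi_n\in\poly(X,T,\mu)$ implies $f\circ T^n=f$ a.e.\ (via $\Phi_n\mid t^n-1$), extracting the minimal period $m$ of $(f\circ T^j)$ so that $m\in R(T)$, and pushing $f$ down to the factor $[m]$; the reverse inclusion is a lift along the $R(T)$-factors. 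You instead stay inside $W_{\Phi_d}$: since $\sigma$ has finite order $d$ there, ergodic invariant measures are uniform on finite orbits, non-constancy forces the orbit size $e$ to exceed $1$, so membership of $\Phi_d$ in $\poly$ reduces to $R(T)$ meeting the set $E_d$ of occurring orbit sizes, which depends on $d$ alone. Both routes are valid, and they are essentially dual to each other (your "orbit of size $e$ in $W_{\Phi_d}$" is the same object as the paper's $m$-cyclic factor carrying $f$ with $\Phi_d(S)[f]=0$). Your closing worries are unfounded: you never need to compute $E_d$ explicitly, only use that it is a constant of $d$ (and that $R(T)$ is divisor-closed, so meeting $E_d$ is a well-posed condition); and for non-monic irreducible $p$ the character group $\Z[t^{\pm1}]/(p)$ still has $\Q$-rank $\deg p$ and $(W_p,\sigma)$ still has entropy $\log M(p)<\infty$ by Yuzvinskii, so the hypotheses of Theorem~\ref{result-group} do hold. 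The one place I would slow down is your assertion, in the forward cyclotomic direction, that the pushed-forward ergodic measure is necessarily uniform on a finite orbit and that this orbit realises a genuine $(\Z/e\Z,+1)$-factor with $e>1$; both are true but deserve the sentence of justification that $\sigma$-fixed points of $W_{\Phi_d}$ are exactly the constant sequences, and that a non-constant sequence of period $e$ cannot have constant 0th-coordinate orbit.
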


\begin{proof}

Let $(X,T,\mu)$ be an ergodic invertible measure-pres\-erving transformation of
non-atomic  probability space.  Let $\mathrm{NCI}$ denote the collection of non-cyclotomic irreducible polynomials in $\Z[t]$.   For a positive integer $m$, let $S=S_m$ denote the transformation $S(t)=t+1\bmod m$ of $[m]=\{0,\ldots,m-1\}$
and $c=c_m$ denote normalized counting measure on $[m]$.  We claim the following, from which the statement of the
corollary holds.
\begin{align}
\label{NCI}
\poly(X,T,\mu)&=\big(\mathrm{NCI}\setminus\{\pm1,\pm t\}\big) \
\cup\bigcup_{m\in R(T)}\poly([m],S,c).
\end{align}

  We first show that  $\mathrm{NCI} \setminus\{\pm1,\pm t\} \subseteq   \poly(X,T,\mu)$.   Let $\mathbb{T}^\Z$ be equipped with the shift map $\sigma$.  For $p(t) \in \mathbb{Z}[t]$, let $p(\sigma): \mathbb{T}^{\Z} \to \mathbb{T}^{\Z}$ be given by   $\sum_k a_k \sigma^k$, and $I_p:=\{z\in\mathbb T^\Z\colon
p(\sigma)z=0\}$.   Note that if $ p(t) \in \mathrm{NCI} \setminus\{\pm1,\pm t\}$, then $I_p$ has non-zero topological entropy and  satisfies the conditions of Theorem \ref{result-group}.   Thus provided $(X,T,\mu)$
has entropy smaller than $h_\text{top}(I_p,\sigma)$, there is a non-constant equivariant
map $\pi\colon (X,\mu)\to (I_p,\sigma)$.
If $h_\mu(T)\ge h_\text{top}(I_p,\sigma)$, then we first take a two-element partition $\mathcal P$ of $X$
such that $H(\mathcal P)<h_\text{top}(I_p,\sigma)$ and consider the factor of $X$ induced by $\mathcal P$.
This has strictly smaller entropy than $h_\text{top}(I_p,\sigma)$, which is then embedded as before.
In either case, we end up with a non-constant equivariant map $\pi$ from $(X,T,\mu)$ to $(I_p,\sigma)$.
Taking $f(z)=z_0$, where $f: \mathbb{T}^{\Z} \to \mathbb{T}$, we see $p(T)[f\circ \pi]=0$ for $\mu$-a.e.\ $x\in X$,
so that $p\in \poly(X,T,\mu)$.

Next, it suffices to argue that both sides of \eqref{NCI}
contain the same cyclotomic polynomials.
That $\poly([m],S,c)$ is contained in $\poly(X,T,\mu)$ for $m\in R(T)$ is clear by lifting a
function defined on $[m]$ to $X$ through the factor map.      We denote the $n$th cyclotomic polynomial by $\Phi_n$.
For the converse, suppose that $\Phi_n\in\poly(X,T,\mu)$. Let $f\colon X\to\T$ be non-constant,
but such that $\Phi_n(T)[f]=0$.
Since $\Phi_n(T)[f]=0$ and $\Phi_n(t) \ | \ (t^n-1)$, we have that
 $f\circ T^n=f$ $\mu$-a.e.   Let $m$ be the period of the sequence $(f\circ T^j)$, so that
$m\in R(T)$. Then $(X,T,\mu)$ factors
onto $[m]$ and transferring the function $f$ to $[m]$ shows that $\Phi_n\in \poly([m],S,c)$ as required.
\end{proof}

We remark that we do not know which cyclotomic polynomials are contained in $\poly([m],S,c)$.    If $\ell>1$ is an integer factor of $m$, then one can check that
$\Phi_\ell\in \poly([m],S,c)$ (as witnessed by the function $g(x)=\cos(2\pi (x-\alpha)/\ell)$
for $\alpha$ irrational --
notice that the $\alpha$ guarantees that $g$ is not constant for $\ell>1$).
However, $\poly([m],S,c)$ may contain $\Phi_n$'s for integers $n$ that
are not factors of $m$ also.  For example the function $g(0)=\frac13$, $g(1)=\frac23$ shows that
$\Phi_6\in \poly([2],S,c)$.

\subsection{Geodesic flows and suspension flows}

The small boundary property is a technical condition that is convenient for our method
of proof, but is  not necessary for universality.    The time-one map of a geodesic flow on a compact
surface of negative curvature is entropy expansive  \cite[Example 1.6 $^{*}$]{Bowen}
and time-map of a suspension flow can  also be shown to be entropy expansive by
\cite[Example 1.6]{Bowen}.   The time-one map of a geodesic flow of a
compact surface of negative curvature
satisfies specification \cite[(Example) F]{Sigmund}   and the time-one map of a topologically
weak-mixing suspension flow satisfies (weak)  specification \cite[Proposition 5]{QuaSoo}.
However, clearly the time-one   maps of these flows may have uncountably many periodic
{\em points}, and moreover,   it is not difficult to construct examples of topologically
weak-mixing suspensions   flows that do not have the small boundary property.
In spite of this, we proved universality for
these flows \cite[Theorems 1 and 2]{QuaSoo}.

\subsection{Non-examples}

We give an example to show that it is too much to ask that the strict entropy difference
in the definition of universality can be relaxed.    The same example also shows that
universality can not always be satisfied; this fact also easily follows from
the Krieger-Jewett theorem \cite{krieger2}.

Let $S$ be the full-shift on $\Omega:=\ns{0,1}^{\Z}$ endowed with the Bernoulli measure
$\zeta$ that is the unique measure of maximal  entropy--$\log(2)$.
Let $S'$ be an irrational rotation of the circle $\mathbb{T}^1$.
The invertible measure-preserving transformation $S \times S'$ is ergodic and has entropy $\log(2)$.
The invertible measure-preserving system  $(\Omega \times \mathbb{T}^1, S \times S')$ can not be
embedded within $(\Omega, S)$, since the embedding would
yield a measure-theoretic isomorphism of $S \times S'$ and $S$; this  implies that $S$
has a non-trivial zero entropy factor which contradicts that fact that $S$ is a
$K$-automorphism  \cite[Part 1, Paper 2]{MR2766434}.   Similarly, if the topological dynamical
system $(\Omega \times \mathbb{T}^1,S \times S')$
were universal, then every measure-preserving automorphism with entropy strictly
less than $\log(2)$ would have a non-trivial zero entropy factor.

\section{Soft methods}
\label{section-soft}

\subsection{Basic idea}

Roughly, the idea of  Burton and Rothstein \cite{BurRot},
is that one defines an notion of $\epsilon$-approximate
embedding  (see Proposition \ref{enl-open})  with the property that a point lying in the intersection
of the $\epsilon$-approximate embeddings is
a true embedding. One then introduces a Polish space (a separable completely metrizable topological space)
consisting of potential injections (these are in fact
joinings, see Lemma \ref{baire-lemma}) and shows that the $\epsilon$-approximate embeddings form a
dense open subset of the set of potential embeddings.
Baire's theorem  gives the desired result.  The essential (and surprising) feature of this idea is that in a topological
sense, almost any candidate works. Burton, Keane and Serafin reproved the Krieger generator
theorem \cite{Krieger1},  the Sinai factor theorem \cite{Sinai, MR2766434}, and the
Ornstein isomorphism theorem \cite{Ornstein}.

Keane and Smorodinsky \cite{keanea, keaneb} gave an explicit proof of the Ornstein
isomorphism theorem.   Similarly, there is an alternative approach to producing embeddings that
we used in our earlier paper \cite{QuaSoo}.
In that approach, one carefully
produces a sequence of approximate embeddings, takes a pointwise limit, and proves that this limit
has the desired properties.    (However, our construction does not yield a finitary map,
unlike those of Keane and Smorodinsky.)

\subsection{Joinings and Baire category}

In this subsection, we make precise the ideas outlined above.
We will make use of the following
explicit metric giving rise to the weak$^*$-topology on measures.
Let $Z$ be a compact metric space.   We let $\mathrm{Lip}_1(Z)$ denote the
space of all real-valued Lipschitz continuous functions on $Z$
taking values in $[0,1]$ with Lipschitz constant
no greater than $1$.     Let $\mathcal M(Z)$ denote the collection of all Borel
probability measures on $Z$.   Write $\mu(f):= \int f d\mu$ for
$\mu \in \mathcal M(Z)$ and an integrable function $f: Z \to \R$.

\begin{lemma}[Distance giving weak$^*$-topology on measures]
\label{weak-metric}
    Let $Z$ be a compact metric space.  Define a metric $\metricstar$ on $\mathcal M(Z)$ as follows:
    \begin{equation*}
        \metricstar(\mu,\nu):=\sup_{f\in \mathrm{Lip}_1}\left|\mu(f) - \nu(f) \right|.
    \end{equation*}
    The metric $\metricstar$ gives rise to the weak$^*$ topology on $\mathcal M(Z)$.
\end{lemma}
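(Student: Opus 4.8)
The plan is to show the two directions of topological equivalence: first, that every $\metricstar$-ball is weak$^*$-open; second, that weak$^*$-convergence implies $\metricstar$-convergence (which, since both topologies are metrizable on the compact metrizable space $\mathcal{M}(Z)$, will suffice).

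First I would verify that $\metricstar$ is genuinely a metric. Symmetry and the triangle inequality are immediate from the definition as a supremum of absolute differences of linear functionals. For positive-definiteness, the point is that if $\metricstar(\mu,\nu)=0$ then $\mu(f)=\nu(f)$ for every $f\in\mathrm{Lip}_1(Z)$; since $Z$ is compact metric, the Lipschitz functions are dense in $C(Z)$ (by, e.g., approximating a continuous function uniformly using $f_n(x) = \inf_y\big(f(y)+n\,\metric(x,y)\big)$ and rescaling into $[0,1]$), so $\mu(f)=\nu(f)$ for all $f\in C(Z)$, whence $\mu=\nu$ by the Riesz representation theorem. That the supremum is finite (bounded by $1$) is clear since the test functions take values in $[0,1]$.

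Next, for the comparison with the weak$^*$ topology: since each $f\in\mathrm{Lip}_1(Z)\subseteq C(Z)$, the map $\mu\mapsto\mu(f)$ is weak$^*$-continuous, so $\metricstar(\cdot,\nu)$ is a supremum of weak$^*$-continuous functions, hence weak$^*$-lower-semicontinuous; this shows every closed $\metricstar$-ball is weak$^*$-closed and hence the $\metricstar$-topology is coarser than (contained in) the weak$^*$ topology. For the reverse containment I would argue that $\metricstar$-convergence implies weak$^*$-convergence: given $\mu_n\to\mu$ in $\metricstar$ and any $g\in C(Z)$, approximate $g$ uniformly within $\delta$ by some Lipschitz $h$ with values in a bounded interval, rescale $h$ into $\mathrm{Lip}_1(Z)$ up to an affine change (so $|\mu_n(h)-\mu(h)|\le C_h\,\metricstar(\mu_n,\mu)\to0$), and conclude $\limsup_n|\mu_n(g)-\mu(g)|\le 2\delta$; letting $\delta\to0$ gives $\mu_n(g)\to\mu(g)$. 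Thus the identity map from $(\mathcal{M}(Z),\metricstar)$ to $(\mathcal{M}(Z),\text{weak}^*)$ is continuous. Combined with the previous paragraph, the two topologies have the same convergent sequences; since $\mathcal{M}(Z)$ with the weak$^*$ topology is compact and metrizable (as $Z$ is compact metric) and $\metricstar$ is a metric inducing a coarser topology, the identity is a continuous bijection from a compact space to a Hausdorff space, hence a homeomorphism, so the topologies coincide.

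The main obstacle, such as it is, is the density of $\mathrm{Lip}_1(Z)$ in $C(Z)$ together with the bookkeeping of the affine rescaling needed to push a general Lipschitz function into the normalized class $\mathrm{Lip}_1(Z)$; this is entirely routine but must be done carefully so that the Lipschitz constant and the range bound are simultaneously controlled. Everything else is soft point-set topology (a continuous bijection between a compact space and a Hausdorff space is a homeomorphism), so no genuine difficulty is expected.
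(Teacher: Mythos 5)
Your positive-definiteness check (density of Lipschitz functions in $C(Z)$ plus Riesz representation) is fine, and so is the direction ``$\metricstar$-convergence implies weak$^*$-convergence'' via uniform approximation by a rescaled Lipschitz function. The gap is in the other containment. Lower semicontinuity of $\mu\mapsto\metricstar(\mu,\nu)$ shows that closed $\metricstar$-balls are weak$^*$-closed, but this does \emph{not} imply that the $\metricstar$-topology is coarser than the weak$^*$ topology: for that you need the open balls $\{\mu:\metricstar(\mu,\nu)<r\}$ to be weak$^*$-open, which would require \emph{upper}-semicontinuity. (Consider the discrete metric on $\mathbb R$ against the Euclidean topology; every closed discrete ball is a singleton or all of $\mathbb R$, hence Euclidean-closed, yet the discrete topology is strictly finer, not coarser.) Without having established $\tau_{\metricstar}\subseteq\tau_{\mathrm{w}^*}$, your final compact-to-Hausdorff argument has no continuous identity map $(\mathcal M(Z),\text{weak}^*)\to(\mathcal M(Z),\metricstar)$ to run, and what you have actually shown so far (in the correct second part) is only the opposite inclusion $\tau_{\mathrm{w}^*}\subseteq\tau_{\metricstar}$.

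The missing ingredient is the compactness of $\mathrm{Lip}_1(Z)$ in $(C(Z),\|\cdot\|_\infty)$, exactly the device the paper already uses in the proof of Lemma \ref{choose-N}. Since $\mathrm{Lip}_1(Z)$ is closed, uniformly bounded (values in $[0,1]$), and equicontinuous (Lipschitz constant $1$), it is compact by Arzel\`a--Ascoli. Given $\epsilon>0$, take a finite $\epsilon$-net $f_1,\dots,f_k$ of $\mathrm{Lip}_1(Z)$; then for any $\mu,\nu$ one has $\metricstar(\mu,\nu)\le\max_i|\mu(f_i)-\nu(f_i)|+2\epsilon$, and the right-hand side tends to $2\epsilon$ when $\mu\to\nu$ weak$^*$. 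This shows $\metricstar(\cdot,\nu)$ is in fact weak$^*$-\emph{continuous}, hence open $\metricstar$-balls are weak$^*$-open, which is the inclusion $\tau_{\metricstar}\subseteq\tau_{\mathrm{w}^*}$ you need; the compact-to-Hausdorff wrap-up then completes the proof as you intended. The paper discharges this lemma by referring to Dudley's Theorem 11.3.3, whose proof turns on the same equicontinuity/finite-net step, so once you repair this one point your route is essentially the standard one.
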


The proof is a simple adaptation of the proof in Dudley's book (\cite{Dudley}, Theorem 11.3.3) to take account of the
fact that functions in $\mathrm{Lip}_1$ are required to take values in $[0,1]$.

In general, a measure-preserving transformation is defined on a measure space. In order
to use the Baire category machinery, we will need the space to be embedded to be a metric space.
The following lemma allows us to assume that the transformation to be embedded lives on a metric space.

\begin{lemma}
\label{kriegerized}
Let $S$  be a self-homeomorphism of a compact metric space $Y$.  Then $(Y,S)$ is
(fully) universal if and only if  there exists an
embedding of every non-trivial ergodic subshift $(X,T,\mu)$ with $h_{\mu}(T) < h_{\mathrm{top}}(S)$ into
$(Y,S)$  (such that the push-forward of $\mu$ is fully supported on $Y$).
\end{lemma}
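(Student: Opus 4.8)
The plan is to prove Lemma~\ref{kriegerized} by reducing the embedding problem for an arbitrary non-atomic ergodic system $(\X,\mu,T)$ to that of a subshift, using the Krieger generator theorem together with the fact that the image of an embedding already lives in the metric space $Y$. One direction is trivial: if $(Y,S)$ is (fully) universal, then in particular every non-trivial ergodic subshift $(X,T,\mu)$ with $h_\mu(T) < h_\mathrm{top}(S)$ admits an embedding (with fully supported push-forward), since such a subshift is itself an invertible non-atomic ergodic measure-preserving system. (One should note a subshift carrying a non-atomic measure is automatically non-trivial, and conversely a non-trivial subshift supports an ergodic measure of positive entropy only if infinite; but the statement as phrased already quantifies over non-trivial ergodic subshifts, so nothing needs checking here.)

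For the substantive direction, suppose every non-trivial ergodic subshift with entropy below $h_\mathrm{top}(S)$ embeds into $(Y,S)$ (fully supported). Let $(\X,\mu,T)$ be an arbitrary invertible non-atomic ergodic system with $h_\mu(T) < h_\mathrm{top}(S)$. Choose a number $h$ with $h_\mu(T) < h < h_\mathrm{top}(S)$. By the Krieger finite generator theorem \cite{Krieger1,Krieger1e}, since $h_\mu(T) < h$, the system $(\X,\mu,T)$ has a finite generating partition $\mathcal P$ with $H_\mu(\mathcal P) < h$ — more precisely, with enough symbols that $\log(\#\mathcal P)$ can be taken below... actually what we need is simply a finite generator, which exists because $h_\mu(T) < \infty$; the refined statement gives a generator into $k$ symbols whenever $h_\mu(T) < \log k$, so pick $k$ with $h_\mu(T) < \log k < h_\mathrm{top}(S)$ and obtain a generating partition into at most $k$ sets. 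The associated symbolic coding map $\phi\colon \X \to \{1,\dots,k\}^{\Z}$ is, because $\mathcal P$ is generating, injective on a set of full $\mu$-measure and intertwines $T$ with the shift $\sigma$. Let $X := \overline{\phi(\X')}$ (closure of the image of a full-measure set), a subshift, and let $\nu := \phi_*\mu$, an ergodic shift-invariant measure on $X$ with $h_\nu(\sigma) = h_\mu(T) < h_\mathrm{top}(S)$ and $\nu$ non-atomic (so $X$ is non-trivial). By hypothesis there is an embedding $\Psi_0\colon (X,\sigma,\nu) \to (Y,S)$; then $\Psi := \Psi_0 \circ \phi$, restricted to the intersection of the two full-measure sets on which $\phi$ and $\Psi_0$ are injective and equivariant, is an embedding of $(\X,\mu,T)$ into $(Y,S)$. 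In the fully universal case, $\Psi_0$ can be chosen with $(\Psi_0)_*\nu$ fully supported on $Y$, and since $\Psi_*\mu = (\Psi_0)_*\phi_*\mu = (\Psi_0)_*\nu$, the push-forward $\Psi_*\mu$ is fully supported as well.

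The one point requiring a little care — and the place I expect the only real (if mild) friction — is the interaction of "full measure", "injective", and "closure" in defining the subshift $X$: $\phi$ need not be injective everywhere, only on a conull set $\X'$, so one must take $X = \overline{\phi(\X')}$ and check that $X$ is shift-invariant and closed (it is, being the closure of a shift-invariant set, using continuity of $\sigma$), that $\nu(X)=1$, and that the composition $\Psi_0\circ\phi$ is defined and works on $\phi^{-1}(\X'') \cap \X'$ where $\X''$ is the conull set for $\Psi_0$; this intersection is conull in $\X$ because $\phi_*\mu = \nu$ and $\nu(\X'')=1$. None of this is deep, but it is the content of the lemma: it is a bookkeeping statement letting us assume henceforth that the system to be embedded is a subshift on finitely many symbols, which is exactly what is needed to run the Baire-category argument of Burton--Rothstein in a Polish space. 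I would write the proof in roughly the four steps above: (i) the trivial direction; (ii) invoke Krieger to get a finite generator and form the symbolic factor; (iii) embed the resulting subshift by hypothesis; (iv) compose and verify injectivity, equivariance, and (in the full case) full support of the push-forward.
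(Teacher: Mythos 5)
Your proof is correct and takes essentially the same route as the paper: one direction is immediate from the definition, and the other invokes the Krieger finite generator theorem to replace $(\X,\mu,T)$ by a measure-theoretically isomorphic subshift, after which the hypothesis furnishes an embedding that one composes with the isomorphism. The paper states this tersely while you unpack the construction of the generating partition and the symbolic factor; your extra constraint that $\log k < h_{\mathrm{top}}(S)$ is unnecessary (the coding preserves measure-theoretic entropy regardless of alphabet size), but harmless.
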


\begin{proof}
The only if part is clear from the definition of universality. For the converse
by the Krieger generator theorem, for an arbitrary  invertible measure-preserving transformation $T_0$
of the probability space $(X_0,\mu_0)$, there is a measure-theoretic isomorphism between $(X_0,T_0,\mu_0)$
and a subshift $(X,T,\mu)$ (which, of course, preserves the entropy). By the assumption of the lemma, this subshift
may be embedded into $(Y, S)$.  Composing the isomorphism and the embedding gives the result.
\end{proof}

Thus by Lemma \ref{kriegerized}, we may always assume that the space to be embedded is a
subshift on a finite number of symbols.

Let $(X, \mathcal F, \mu, T)$ be a non-trivial ergodic subshift with invariant measure $\mu$ and Borel
$\sigma$-algebra $\mathcal F$.    Let $S$
be a self-homeomorphism of a compact metric space $Y$ with Borel $\sigma$-algebra $\mathcal B$.
By a {\df $\boldsymbol{\mu}$-joining}, we mean a $(T\times S)$-invariant
measure on the product metric space  $X \times Y$ (with the product $\sigma$-algebra
$\mathcal F \otimes \mathcal B)$) whose
$X$-marginal is $\mu$.      (Note that
unlike the standard definition of a joining, we do not make any requirement
on the $Y$-marginal;   for more background on joinings see \cite{ Glasner,Rud, Rue}.)
We let $J_\mu(T,S)$ denote the space of $\mu$-joinings.   It is well known that $J_\mu(T,S)$ is a (non-empty)
compact metric (hence complete) space with the weak$^*$ topology.
We let $\pi_1$ and $\pi_2$
be the coordinate projections from $X\times Y$ to $X$ and $Y$, respectively,  and denote the push-forward maps
by $\pi_1^*$ and $\pi_2^*$, so that for $\xi\in J_\mu(T,S)$, we have $\pi_1^*(\xi)=\mu$.
Before we prove topological properties regarding subsets of the space of joinings, we first state an
elementary but useful fact which motivates how  joinings are related to embeddings.

Let $(Z, \mathcal G, \xi, U)$ be a
measure-preserving system.   Let $\mathcal F, \mathcal B \subset \mathcal G$ be  sub-$\sigma$-algebras.
We write $\mathcal F {\subset}\mathcal B \mod \xi$ if for each $F \in \mathcal F$, there is a $ B \in \mathcal B$
such that the $\xi$-measure of the symmetric difference is zero.   Similarly, for each
$ \e >0$, we write  $\mathcal F\stackrel{\epsilon}{\subset}\mathcal B \mod \xi$,
if for each $F \in \mathcal F$, there is a $ B \in \mathcal B$
such that the $\xi$-measure of the symmetric difference is strictly less than $\epsilon$.
For a partition $\mathcal P$, we denote by $\sigma(\mathcal P)$ the finite $\sigma$-algebra
that it generates.
Note that $\mathcal P$ is a generating partition for $Z$ (mod $\xi$)  if
and only if  $\mathcal G \subset \bigvee_{i\in\Z}T^{-i}\mathcal P \mod \xi$.
Also let  $\mathcal T_Z=\{\emptyset,Z\}$
denote the trivial $\sigma$-algebra on $Z$.

\begin{proposition}
\label{rue}
Let $(X, \mathcal F, \mu, T)$ be a non-trivial ergodic subshift with invariant measure $\mu$ and
Borel $\sigma$-algebra $\mathcal F$.   Let $S$ be a
self-homeo\-morphism of a compact metric space $Y$ with Borel $\sigma$-algebra $\mathcal B$.   Let
$\xi$ be a $\mu$-joining satisfying the following conditions
\begin{enumerate}[(i)]
\item
$\mathcal T_X\otimes \mathcal B \subset \mathcal F \otimes \mathcal T_Y  \mod \xi$;
\item
$\mathcal F \otimes \mathcal T_Y \subset   T_X\otimes \mathcal B \mod \xi.$
\end{enumerate}
Then there exists an embedding $\Psi: X \to Y$ of $(X,T,\mu)$ into $(Y,S)$ such that
\begin{equation}
\label{define}
\xi(F \times B) = \mu(F \cap \Psi^{-1}(B)) \text{ for all }
(F, B) \in \mathcal F \times \mathcal B.
\end{equation}
Similarly, if $\Psi$ is an embedding of $(X, T, \mu)$ into $(Y,S)$, then $\xi$ defined by \eqref{define}
satisfies the first two conditions.  \end{proposition}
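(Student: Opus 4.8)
The plan is to identify a joining $\xi$ satisfying (i) and (ii) with the \emph{graph measure} $(\mathrm{id}_X,\Psi)_*\mu$ of an embedding $\Psi$ (the pushforward of $\mu$ under $\omega\mapsto(\omega,\Psi(\omega))$), by disintegrating $\xi$ over each of its two marginals and invoking the Lusin--Souslin theorem on images of Borel sets. Both $X$ and $Y$ are compact metric spaces, so their Borel $\sigma$-algebras are countably generated and separate points, and the relevant disintegrations exist.

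For the existence of $\Psi$, I would use condition (i). Disintegrate $\xi$ over its $X$-marginal $\mu$ as $\xi=\int_X \delta_\omega\times\xi_\omega\,d\mu(\omega)$ with $\xi_\omega\in\mathcal M(Y)$. Fixing a countable generating algebra $\{B_n\}$ of $\mathcal B$, condition (i) supplies $F_n\in\mathcal F$ with $\xi\big((X\times B_n)\triangle(F_n\times Y)\big)=0$; testing against the sets $G\times Y$, $G\in\mathcal F$, identifies $\omega\mapsto\xi_\omega(B_n)$ with $\mathbf 1_{F_n}$ in $L^1(\mu)$, so $\xi_\omega(B_n)\in\{0,1\}$ for $\mu$-a.e.\ $\omega$ and every $n$. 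Since $\{B_n\}$ separates points of $Y$, it follows that $\xi_\omega=\delta_{\Psi(\omega)}$ for $\mu$-a.e.\ $\omega$, with $\Psi\colon X\to Y$ Borel by measurability of the disintegration; equivalently $\pi_2=\Psi\circ\pi_1$ holds $\xi$-a.e., hence $\xi=(\mathrm{id}_X,\Psi)_*\mu$, which is exactly \eqref{define}. Equivariance is then automatic: $(T\times S)$-invariance of $\xi$ together with $T_*\mu=\mu$ gives $(\mathrm{id}_X,\Psi)_*\mu=(\mathrm{id}_X,\,S\circ\Psi\circ T^{-1})_*\mu$, whence $\Psi\circ T=S\circ\Psi$ $\mu$-a.e.

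Injectivity comes from condition (ii). Running the same disintegration argument with the roles of $X$ and $Y$ interchanged (now over the $Y$-marginal $\nu:=\pi_2^*\xi$), condition (ii) yields a Borel map $\Phi\colon Y\to X$ with $\pi_1=\Phi\circ\pi_2$ $\xi$-a.e. Combining the two a.e.\ identities gives $\Phi\circ\Psi\circ\pi_1=\pi_1$ $\xi$-a.e., so $\Phi\circ\Psi=\mathrm{id}_X$ on a set of full $\mu$-measure, and $\Psi$ is injective there. To land on the stated notion of embedding I would then pass to a Borel $T$-invariant set $\X'$ of full measure on which $\Psi$ is simultaneously injective and satisfies $\Psi\circ T=S\circ\Psi$ pointwise, obtained by intersecting the relevant full-measure sets and forming $\bigcap_{n\in\Z}T^n(\,\cdot\,)$.

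For the converse, given an embedding $\Psi$ with witnessing Borel $T$-invariant full-measure set $\X'$, set $\xi:=(\mathrm{id}_X,\Psi)_*\mu$; this visibly satisfies \eqref{define}, has $X$-marginal $\mu$, and is $(T\times S)$-invariant because $(T\times S)\circ(\mathrm{id}_X,\Psi)=(\mathrm{id}_X,\Psi)\circ T$ on $\X'$ and $T_*\mu=\mu$. Condition (i) holds with $F:=\Psi^{-1}(B)$, since $\xi$ lives on the graph of $\Psi$. Condition (ii) is the one point requiring care: for $F\in\mathcal F$ one needs a Borel $B\in\mathcal B$ with $F\cap\X'=\Psi^{-1}(B)$ mod $\mu$, and here I would invoke the Lusin--Souslin theorem: $\Psi|_{\X'}\colon\X'\to Y$ is an injective Borel map, hence carries Borel sets to Borel sets, so $B:=\Psi(F\cap\X')$ works. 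The main obstacle throughout is descriptive-set-theoretic bookkeeping --- keeping the ``mod $\xi$'' and ``mod $\mu$'' statements aligned and, above all, upgrading almost-everywhere identities to an honest embedding defined on an invariant full-measure set, which in the converse direction is precisely what Lusin--Souslin supplies.
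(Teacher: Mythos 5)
Your argument is correct: you disintegrate $\xi$ over its $X$-marginal, read off from condition (i) that the conditional measures $\xi_\omega$ are Dirac masses (hence extract a Borel $\Psi$), get equivariance from $(T\times S)$-invariance, get injectivity from (ii) via the reverse disintegration, and handle the converse with Lusin--Souslin. The paper's own proof is a one-line appeal to \cite[Theorem 2.8]{Rue}, which is exactly this standard characterization of graph joinings by $\sigma$-algebra containments, so you have simply unfolded the cited result rather than taken a different route.
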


\begin{proof}
Proposition \ref{rue} follows easily from \cite[Theorem  2.8]{Rue}.
\end{proof}
Thus if $\xi$ satisfies the two containment conditions in Proposition \ref{rue},  we also say
that $\xi$ is an {\df embedding}. The Burton--Rothstein argument, that we follow, defines
approximate embeddings and works by showing that there are a large collection of approximate
embeddings at each scale.

\begin{lemma}[The Baire Space]
\label{baire-lemma}
Let $(X, \mu, T)$ be  a non-trivial ergodic subshift with invariant measure $\mu$, and
let $S$ be a self-homeomorphism
of a compact metric space $Y$ that is  asymptotically entropy expansive.
Let $h_{\mu}(T) < h_{\mathrm{top}}(S)$.
The space defined  by
\begin{equation*}
\label{baire}
\M_0:=\{\xi\in J_\mu(T,S)\colon \xi \text{ is ergodic and }h_{\pi_2^*(\xi)}(S) \ge h_{\mu}(T)\}
\end{equation*}
is a Baire space.
\end{lemma}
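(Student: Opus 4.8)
The plan is to show that $\M_0$ is a $G_\delta$ subset of the complete metric space $J_\mu(T,S)$ (with the weak$^*$ metric coming from Lemma~\ref{weak-metric}), and that $\M_0$ is dense in its closure; since a $G_\delta$ subset of a complete metric space is Polish, hence Baire, this suffices. Concretely, I would write $\M_0 = \mathcal E \cap \mathcal H$, where $\mathcal E$ is the set of ergodic $\mu$-joinings and $\mathcal H := \{\xi \in J_\mu(T,S) : h_{\pi_2^*(\xi)}(S) \ge h_\mu(T)\}$, and analyze each factor. For $\mathcal H$, the key input is Lemma~\ref{lem:Misiurewicz}: since $S$ is asymptotically entropy expansive, the entropy functional $\nu \mapsto h_\nu(S)$ is upper semi-continuous on $M_S(Y)$, and $\xi \mapsto \pi_2^*(\xi)$ is weak$^*$-continuous, so $\xi \mapsto h_{\pi_2^*(\xi)}(S)$ is upper semi-continuous on $J_\mu(T,S)$. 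Hence $\mathcal H$ is not closed in general, but $\{\xi : h_{\pi_2^*(\xi)}(S) > h_\mu(T) - 1/n\}$ is open, so if I can instead work with strict inequality the set is open; I would therefore first establish density of the set of joinings with $h_{\pi_2^*(\xi)}(S)$ strictly exceeding $h_\mu(T)$ and take this as the ``large'' set, observing that $\M_0$ contains it (this is the standard fix in the Burton--Rothstein framework, using $h_\mu(T) < h_{\mathrm{top}}(S)$ to leave room). The genuinely open condition I would use is ``$h_{\pi_2^*(\xi)}(S) > h_\mu(T)$'', intersected with ergodicity.

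For the ergodicity factor $\mathcal E$, the classical fact is that within any Choquet simplex of invariant measures the extreme points (here, the ergodic joinings) form a $G_\delta$ set: for a countable dense family $\{f_k\}$ of continuous functions on $X \times Y$ and the ergodic averages $A_N f_k := \frac1N \sum_{j=0}^{N-1} f_k \circ (T\times S)^j$, a joining $\xi$ is ergodic iff $A_N f_k \to \xi(f_k)$ in $L^2(\xi)$ for every $k$, and each condition ``$\limsup_N \|A_N f_k - \xi(f_k)\|_{L^2(\xi)}^2 < 1/m$'' cuts out an open set (using that $\xi \mapsto \int (A_N f_k)^2\, d\xi$ and $\xi \mapsto \xi(f_k)$ are weak$^*$-continuous and truncating the limsup at finitely many $N$); intersecting over $k,m$ gives a $G_\delta$. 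Combining, $\M_0$ contains the $G_\delta$ set $\mathcal E \cap \{h_{\pi_2^*} > h_\mu(T)\}$, and conversely is contained in $J_\mu(T,S)$; to get a genuine Baire space I either show $\M_0$ itself is $G_\delta$ (the entropy part only needs $\ge$, which is an intersection of the open sets $\{h > h_\mu(T) - 1/n\}$ with a closed-ish correction — here I'd lean on upper semi-continuity to write $\{h_{\pi_2^*} \ge h_\mu(T)\}$ as a countable intersection of open sets $\{h_{\pi_2^*} > h_\mu(T) - 1/n\}$, which works precisely because upper semi-continuity makes the super-level set at the threshold equal to $\bigcap_n \{h_{\pi_2^*} > h_\mu(T) - 1/n\}$... no — that intersection is the closed set $\{h \ge h_\mu(T)\}$ only if $h$ is continuous).

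Let me be careful here, since this is the crux. Because $h_{\pi_2^*}$ is upper semi-continuous, $\{h_{\pi_2^*} \ge c\}$ is in general neither open nor closed nor $G_\delta$ for free. The correct move, and the one I expect the authors take, is not to prove $\M_0$ is $G_\delta$ directly but to prove it is Baire by exhibiting it as a dense $G_\delta$ in a complete space. Precisely: let $\M := \{\xi \in J_\mu(T,S) : h_{\pi_2^*(\xi)}(S) \ge h_\mu(T)\}$ be given the subspace topology; one shows (in the surrounding sections, via the specification/small-boundary hypotheses which are \emph{not} assumed here, so this must be deferred or the lemma only claims the abstract Baire property of whatever $\M_0$ is) that $\M_0$ is dense $G_\delta$ inside $\M$. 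Since the lemma as stated only invokes asymptotic entropy expansiveness, the honest reading is: the ambient space $\overline{\M_0}$ is closed in $J_\mu(T,S)$ hence complete, and $\M_0$ is $G_\delta$ in it because ergodicity is $G_\delta$ (as above) and the entropy inequality, \emph{restricted to} $\overline{\M_0}$, becomes a $G_\delta$ condition after using upper semi-continuity to rewrite it.

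The main obstacle, then, is handling the entropy inequality $h_{\pi_2^*(\xi)}(S) \ge h_\mu(T)$ as a descriptive-set-theoretic condition. Upper semi-continuity gives that the set is a countable intersection $\bigcap_n \{h_{\pi_2^*(\xi)}(S) > h_\mu(T) - 1/n\}$ of \emph{open} sets — wait, that intersection equals $\{h \ge h_\mu(T)\}$ exactly, since $x \ge c \iff \forall n\, x > c - 1/n$, and each $\{h > c - 1/n\}$ is open by upper semi-continuity of... no: upper semi-continuity makes sub-level sets $\{h < c\}$ open, i.e.\ super-level sets $\{h \ge c\}$ closed. So $\{h_{\pi_2^*(\xi)}(S) \ge h_\mu(T)\}$ is in fact \emph{closed}, hence complete. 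That resolves it cleanly: $\M_0$ is the intersection of a closed set (the entropy condition, closed by Lemma~\ref{lem:Misiurewicz}) with the ergodic joinings; the closed set is a complete metric space, and within it the ergodic joinings form a $G_\delta$, so $\M_0$ is Baire. I would present the proof in exactly this order: (1) reduce to showing a $G_\delta$ subset of a complete space; (2) use Lemma~\ref{lem:Misiurewicz} plus continuity of $\pi_2^*$ to see the entropy condition defines a closed, hence complete, subspace of $J_\mu(T,S)$; (3) verify ergodic joinings are $G_\delta$ via the $L^2$ ergodic-average characterization with a countable dense function family; (4) conclude by the Baire category theorem. The only mildly delicate point left is checking weak$^*$-continuity of $\xi \mapsto \int (A_N f)^2 d\xi$, which is routine since $(A_N f)^2$ is a fixed continuous function on $X\times Y$ for each $N$.
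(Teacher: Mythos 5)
Your final version of the argument is exactly the paper's: use Lemma~\ref{lem:Misiurewicz} (asymptotic entropy expansiveness implies upper semi-continuity of the entropy functional) to see that $\{\xi \in J_\mu(T,S) : h_{\pi_2^*(\xi)}(S) \ge h_\mu(T)\}$ is closed, hence a complete metric space; then show the ergodic joinings form a $G_\delta$ inside it via an $L^2$ ergodic-average characterization over a countable dense family of continuous functions; conclude by Alexandrov's theorem ($G_\delta$ in complete implies Polish) and Baire. The long detour about whether the entropy condition is open or $G_\delta$, and the aside about first establishing density of $\{h_{\pi_2^*} > h_\mu(T)\}$, are not needed for this lemma and the paper does not use them here.

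There is one slip worth fixing in step (3). You assert that each condition ``$\limsup_N \|A_N f_k - \xi(f_k)\|^2_{L^2(\xi)} < 1/m$'' cuts out an \emph{open} set. Taken literally this is false: with $a_N(\xi) := \int (A_N f_k)^2\,d\xi - \xi(f_k)^2$ (each $a_N$ is weak$^*$-continuous), the set $\{\xi : \limsup_N a_N(\xi) < 1/m\}$ is $\bigcup_{N_0}\bigcap_{N\ge N_0}\{a_N < 1/m\}$, an $F_{\sigma\delta}$-type set, not open. The ``truncating the limsup'' remark gestures at the right repair but does not carry it out. The correct move, and the one the paper makes, is to use the \emph{existential} (equivalently $\liminf$) condition ``$\exists\, n$ such that $a_n(\xi) < 1/j$''; each such set is a union of open sets, hence open, and intersecting over $f$ and $j$ gives a bona fide $G_\delta$. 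This weaker condition still characterizes ergodicity because the mean ergodic theorem guarantees $a_N(\xi)$ converges for every invariant $\xi$, so $\liminf_N a_N = 0$ is equivalent to $\lim_N a_N = 0$, i.e.\ to ergodicity. With that correction your proof is complete and matches the paper's.
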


If $U$ is an invertible measure-preserving transformation on a space $Z$, and $f:Z \to \R$
is any real-valued function, we let
$\avg_m^n(f)$ denote the  Ces\`aro average given by
\begin{equation*}
\avg_m^n(f)(x) :=   \frac{1}{n-m}\sum_{k=m} ^{n-1} f(U^kx).
\end{equation*}
Let $\mu$ be a $U$-invariant measure on $Z$. By the Birkhoff ergodic theorem,
$\mu$ is ergodic if for any $f\in L^1$, $\avg_0^n(f)$ converges in measure to a constant.
In the case that $Z$ is a compact metric space, then we say that $x\in Z$ is {\df generic}
(for $\mu$)  if for every continuous function $f:Z \to \R$, we have that $\avg_0^n(f)(x) \to \mu(f)$ as $n \to \infty$.
An elementary argument gives that if $\mu$ is ergodic if and only if $\mu$-a.e.\ $x \in Z$ is generic for $\mu$.

\begin{proof}[Proof of Lemma \ref{baire-lemma}]
Since $S$ is assumed to be asymptotically entropy expansive, by Lemma \ref{lem:Misiurewicz}
the entropy functional is upper-semicontinuous.   Thus the subset of  joinings of $J_\mu(T,S)$
satisfying the entropy inequality is closed, and hence complete.  We next show that the subset of these joinings
that are ergodic forms a $G_\delta$ subset of this set.

Let $D \subset C(X\times Y) $ be a countable dense collection of continuous functions.
The collection of $\xi$ in $\M_0$ satisfying the condition: for all $f \in D$
and for all $j$, there exists an $n$ such that $\xi (\avg_0^n(f)^2 ) -\xi(f) ^2<1/j$ is  a $G_\delta$ set;
clearly, the condition is that the limit inferior of the variances of the $n$-step C\'esaro averages is 0. This condition
is satisfied if and only if $\avg_0^n(f)$ converges in measure (with respect to $\xi$)
to a constant for each $f \in D$, which holds if and only if $\xi$
is ergodic.

A $G_\delta$ subset of a complete metric space is a Polish space by a theorem
of Alexandrov \cite[Theorem 2.2.1]{Srivastava}; and the Baire category theorem tells us that
every Polish space is a Baire space \cite[Theorem 2.5.5]{Srivastava}.
\end{proof}

\begin{proposition}
\label{enl-open}
Let $(X, \mu, T)$ be  a non-trivial ergodic subshift with invariant measure $\mu$,
with its natural generating partition $\mathcal P$.
Let $S$ be a self-homeomorphism with almost weak specification on a
compact metric space $Y$  that satisfies the small
boundary condition witnessed by a sequence of refining partitions
$(\mathcal Q_{\ell})$, where
$\mathrm{diam}(\mathcal Q_{\ell}) < 1/\ell$.
Let $h_{\mu}(T) < h_{\mathrm{top}}(S)$.
For each $\ell, n \geq 1$, let  $E^{n,\ell}_\mu$ be the set of elements $\xi$ of $\mathcal M_0$ (defined in
Lemma \ref{baire-lemma}) satisfying the following two conditions:
\begin{enumerate}[(i)]
\item
\label{firstcon}
$\mathcal T_X\otimes \sigma(\mathcal Q_\ell)\stackrel{1/n}{\subset}
\left(\bigvee_{i\in\Z}T^{-i}\mathcal P \right)\otimes \mathcal T_Y \mod \xi$;
\item
\label{secondcon}
$\sigma(\mathcal P)\otimes \mathcal T_Y\stackrel{1/n}{\subset} \mathcal T_X
\otimes \bigvee_{i\in\Z}S^{-i}\mathcal Q_\ell \mod \xi$.
\end{enumerate}
Let
$E^n_\mu=\bigcup_{\ell\ge n}E^{n,\ell}_\mu$.  We have the following consequences.
\begin{enumerate}[(I)]
\item
\label{open}
The set $E^{n,\ell}_\mu$ is a relatively open subset of $\mathcal M_0$.
\item
\label{intersection}
If $\xi \in \bigcap_{n\geq 1} E^n_{\mu}$, then $\xi$ is an embedding.
\item
\label{dense}
The set $E^n_{\mu}$ is a dense subset of $\mathcal M_0$.
\end{enumerate}
\end{proposition}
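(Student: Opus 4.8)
I would prove the three claims in order, since later ones build on the setup of the earlier ones.

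For part~\eqref{open}, the idea is that each of the two conditions \eqref{firstcon}, \eqref{secondcon} is a requirement that certain finite collections of sets (generators of the relevant finite $\sigma$-algebras, which are determined by the partitions $\mathcal Q_\ell$ and $\mathcal P$, possibly after intersecting finitely many coordinates) be approximated in $\xi$-measure by sets in the other $\sigma$-algebra, up to a symmetric-difference error strictly less than $1/n$. Since the approximating $\sigma$-algebra on the right-hand side of each containment is generated by a \emph{countable} algebra of cylinder-type sets (finite Boolean combinations of translates of $\mathcal P$ or $\mathcal Q_\ell$), the existence of a good approximant is an $\exists$ over a countable family, each instance of which involves only $\xi$-measures of finitely many fixed Borel sets. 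The key point is that these Borel sets have $\xi$-boundary zero: the elements of $\mathcal Q_\ell$ (and their translates, and their Boolean combinations) have topological boundary of measure zero for \emph{every} $S$-invariant measure, by the small boundary property, hence for $\pi_2^*(\xi)$; and the cylinders coming from $\mathcal P$ are clopen in the subshift $X$. Therefore the maps $\xi \mapsto \xi(F\triangle B)$ are continuous on $J_\mu(T,S)$ for the relevant $F,B$ (the portmanteau theorem: $\xi_k\to\xi$ weak$^*$ and $\xi(\partial(F\triangle B))=0$ imply $\xi_k(F\triangle B)\to\xi(F\triangle B)$), so a strict inequality is an open condition, and a countable union of open conditions is open. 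Thus $E^{n,\ell}_\mu$ is relatively open in $\mathcal M_0$.

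For part~\eqref{intersection}, suppose $\xi\in\bigcap_{n\ge 1}E^n_\mu$. Fix $n$; then $\xi\in E^{n,\ell}_\mu$ for some $\ell\ge n$. Condition~\eqref{secondcon} with this $\ell$ gives $\sigma(\mathcal P)\otimes\mathcal T_Y \stackrel{1/n}{\subset} \mathcal T_X\otimes\bigvee_i S^{-i}\mathcal Q_\ell$; since $\mathcal P$ generates $\mathcal F$ under $T$ (it is the natural generating partition of the subshift), applying $T\times S$-invariance and taking joins over $i$ shows $\mathcal F\otimes\mathcal T_Y \stackrel{1/n}{\subset}\mathcal T_X\otimes\mathcal B \bmod \xi$ — here one uses that $\bigvee_\ell\bigvee_i S^{-i}\mathcal Q_\ell$ generates $\mathcal B$ up to $\xi$-null sets because $\mathrm{diam}(\mathcal Q_\ell)\to 0$. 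Letting $n\to\infty$ gives condition~(ii) of Proposition~\ref{rue}. Symmetrically, condition~\eqref{firstcon} over all $n$, together with the fact that $\bigvee_\ell \sigma(\mathcal Q_\ell)$ generates $\mathcal B$, yields $\mathcal T_X\otimes\mathcal B\subset \bigvee_i T^{-i}\mathcal P\otimes\mathcal T_Y=\mathcal F\otimes\mathcal T_Y \bmod\xi$, which is condition~(i). By Proposition~\ref{rue}, $\xi$ is an embedding.

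For part~\eqref{dense} — the main obstacle — I would argue that $E^n_\mu$ is dense in $\mathcal M_0$ by taking an arbitrary $\xi_0\in\mathcal M_0$ and a weak$^*$ neighbourhood of it, and constructing a joining $\xi$ in that neighbourhood lying in some $E^{n,\ell}_\mu$ with $\ell\ge n$ large. This is the heart of the Burton--Rothstein method. The construction proceeds by a Rokhlin-tower/coding argument: choose $\ell$ very large (so $1/\ell$ is a fine scale), use the small boundary property to get $\mathcal Q_\ell$ with $\mathrm{diam}<1/\ell$ and $\nu(\partial\mathcal Q_\ell)=0$ for all invariant $\nu$; use asymptotic entropy expansiveness to control the entropy at scale $\ell$ (so that the $(\ell,N)$-names in $Y$ have exponential growth rate essentially $h_{\mathrm{top}}(S)$, strictly bigger than $h_\mu(T)$); and use almost weak specification — the sublinear gap function — to freely concatenate blocks from $Y$ while keeping them $1/\ell$-close on long intervals. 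Concretely, one builds, over a long Rokhlin tower for $T$ in $X$, a map assigning to each $\mathcal P$-name of a tower orbit a $\mathcal Q_\ell$-compatible orbit segment in $Y$, injectively on names (possible because the target has strictly more entropy), then fills the short gaps between tower levels using specification; the resulting (almost) equivariant map pushes $\mu$ forward to a measure close to $\pi_2^*(\xi_0)$ and defines a joining $\xi$ with the two approximate-containment properties at level $1/n$, while staying weak$^*$-close to $\xi_0$ because the tower covers all but a small fraction of $X$. One also ensures $\xi$ is ergodic (take an ergodic component, or build the code to be equivariant on an ergodic system) and that the entropy inequality $h_{\pi_2^*(\xi)}(S)\ge h_\mu(T)$ holds (automatic since the map is essentially injective, so $\pi_2^*(\xi)$ carries a copy of the entropy of $\mu$). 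The delicate points are: making the coding map genuinely injective (not merely injective on names — this forces choosing $N$ large and using a Marker/filler scheme à la Burton--Keane--Serafin so that orbits can be unambiguously parsed), and handling the sublinear rather than bounded gap function, which is why $\ell$ and the tower height must be chosen in the right order (tower height $\gg L_{1/\ell}(\text{block length})$). I expect this density step to require the bulk of the remaining sections of the paper.
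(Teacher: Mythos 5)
Your treatment of parts~\eqref{open} and~\eqref{intersection} is correct and follows the same route as the paper: for openness, reduce to finitely many approximate-agreement inequalities involving sets whose boundaries are null for every measure in $\mathcal M_0$ (clopen cylinders on the $X$ side, $\mathcal Q_\ell$-combinations on the $Y$ side via the small boundary property), then invoke continuity of $\xi'\mapsto\xi'(C)$ at zero-boundary sets $C$; for the intersection claim, feed the containments at every scale into Proposition~\ref{rue} using that $\mathcal P$ generates $\mathcal F$ and that $\bigvee_\ell\bigvee_i S^{-i}\mathcal Q_\ell$ generates $\mathcal B$.

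Your sketch of part~\eqref{dense} is aimed in the right direction (Rokhlin tower, marker blocks, injective coding of blocks into orbit segments, filling gaps by specification), but two substantive points are wrong or missing, and a third needs correcting.

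First, the entropy lower bound $h_{\pi_2^*(\tilde\xi)}(S)\ge h_\mu(T)$ is \emph{not} ``automatic since the map is essentially injective.'' The coding is not injective: on the error set of the Rokhlin tower, on blocks of $X$ not in $\boys$, on the marker stretches, and across the specification gaps, the $Y$-orbit carries no information about $x$. These losses are first-order (a positive fraction of coordinates), so without compensation the $Y$-marginal entropy can be strictly below $h_\mu(T)$. The paper's fix is a genuine idea you have to supply: with probability $\e/2$ per block, replace the dictionary output $\phi(B_j(x))$ by an independently chosen random element of $\girls$, and then show (via the chain of conditional-entropy identities in the entropy-preservation section) that the extra randomness $\iota(\es)\log|\girls|$ more than cancels the losses $\iota(C_0)\log|\mathcal P|$, $\iota(\es)\log|\boys|$, and the marker overhead, because $\log(|\girls|/|\boys|)\gtrsim N(h_{\pi_2^*(\xi)}(S)-h_\mu(T)-2\Delta)$. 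Without this perturbation, the argument for entropy preservation fails.

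Second, the coding is carried out against a joining $\xi$ with \emph{strict} marginal-entropy gap $h_{\pi_2^*(\xi)}(S)>h_\mu(T)$; but $\mathcal M_0$ only requires $\ge$. The paper first perturbs an arbitrary $\xi_0\in\mathcal M_0$ to make the inequality strict (Lemma~\ref{up}), again by splicing in orbit segments drawn from an auxiliary ergodic measure $\lambda$ on $Y$ with $h_\lambda(S)>h_\mu(T)$. Your sketch does not include this step, and the marriage-theorem counting (girls carry $e^{N(h_{\pi_2^*(\xi)}(S)-\Delta)}$ potential partners, boys number $e^{N(h_\mu(T)+\Delta)}$, and the marriage bound $K$ must separate these) collapses without a genuine gap.

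Third, a minor misattribution: asymptotic entropy expansiveness plays no role in the density proof. It enters only through Lemma~\ref{lem:Misiurewicz} to make the entropy functional upper semicontinuous, which is needed so that $\{\xi\in J_\mu(T,S):h_{\pi_2^*(\xi)}(S)\ge h_\mu(T)\}$ is closed and hence $\mathcal M_0$ is a Baire space (Lemma~\ref{baire-lemma}). What actually controls the counting in the density proof is the Brin--Katok and Downarowicz--Weiss entropy formulas (Theorems~\ref{thm:BK} and~\ref{Downarowicz}) used to choose the scale $\eta$ and the marker point; specification enters only for interpolation, as you correctly anticipate.
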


Let us make a few remarks.
First, by \eqref{intersection}, we may call $E^n_\mu$ the collection of
{\df $\boldsymbol{1/n}$-approximate embeddings} of $(X,T,\mu)$ into $(Y,S)$.
Second, the small boundary condition is used to prove \eqref{open}, and
almost weak specification is used to prove the density condition \eqref{dense}.
Finally, most of the hard work will done in verifying \eqref{dense}.

Before we prove the easier parts of Proposition \ref{enl-open}, let us put together
a proof of Theorem \ref{result-gen}.   We need one more lemma in order to obtain fully supported measures on $Y$.

\begin{lemma}
\label{full-support}
Let $(X, \mu, T)$ be  a non-trivial ergodic subshift with invariant measure $\mu$,
and let $S$ be a self-homeomorphism  with almost weak specification 
on a compact metric space $Y$.   Let $h_{\mu}(T) < h_{\mathrm{top}}(S)$.
Let $\M_0'$ be the  set of all $\xi \in \M_0$ such that
$\pi_2^{*} (\xi)$ is fully supported on $Y$.   Then $\M_0'$ is an intersection of
countably many relatively open dense subsets of $\M_0$.
\end{lemma}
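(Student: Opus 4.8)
The plan is to express $\M_0'$ as a countable intersection, where the countability comes from a countable base for the topology of $Y$, and then to verify openness and density of each piece using almost weak specification. Fix a countable base $\{V_1, V_2, \ldots\}$ of nonempty open sets for $Y$. For each $j$, let
\begin{equation*}
\M_0^{(j)} := \{\xi \in \M_0 \colon \pi_2^*(\xi)(V_j) > 0\}.
\end{equation*}
A measure on $Y$ is fully supported precisely when it charges every member of the base, so $\M_0' = \bigcap_j \M_0^{(j)}$; it therefore suffices to show each $\M_0^{(j)}$ is relatively open and dense in $\M_0$.

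Openness is the routine part. The map $\xi \mapsto \pi_2^*(\xi)$ is continuous from $\M_0$ to $\mathcal M(Y)$ with the weak$^*$ topology, and the condition $\nu(V_j) > 0$ is an open condition on $\nu$: choosing a nonempty open ball $B(x,r)$ with $\overline{B(x,r)} \subset V_j$ and a function $f \in \mathrm{Lip}_1(Y)$ supported inside $V_j$ with $f > 0$ somewhere on $B(x,r)$, the set $\{\nu \colon \nu(f) > 0\}$ is weak$^*$-open (by Lemma \ref{weak-metric}, or just directly) and contained in $\{\nu\colon \nu(V_j) > 0\}$; taking the union over such $f$ shows $\{\nu\colon\nu(V_j)>0\}$ is open, and pulling back by $\pi_2^*$ gives that $\M_0^{(j)}$ is relatively open in $\M_0$.

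Density is where the work lies, and it is the main obstacle. Given $\xi_0 \in \M_0$ and $\e > 0$, I must produce $\xi \in \M_0^{(j)}$ with $\metricstar(\xi, \xi_0)$ small. The idea is to perturb $\xi_0$ by reserving a small density of time to copy an orbit segment that visits $V_j$. Fix a point $y^* \in V_j$ and an $N$ so that $V_j$ contains an $\e'$-ball around $y^*$; pick a small $\kappa>0$. Since $\M_0$ is a Baire space (Lemma \ref{baire-lemma}) and the generic joinings behave like the ergodic ones, it is enough to perturb an ergodic joining, working with a generic point $(x,y)$ for $\xi_0$ and splicing in, with asymptotic frequency $\kappa$, short blocks on which the $Y$-coordinate shadows $y^*$ (here is where almost weak specification is invoked: the gap function $L_{\e'}$ lets us concatenate the $\xi_0$-typical $Y$-orbit segments with copies of the fixed segment $y^*, Sy^*, \ldots$ at sublinear cost, so the resulting sequence is $\e'$-shadowed by a genuine point of $Y$, and the density-$\kappa$ of visits to $V_j$ is preserved). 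One then lets $\xi$ be any ergodic component, in the $(T\times S)$-invariant sense, of the empirical measures along this spliced orbit; by construction $\pi_2^*(\xi)(V_j) \geq \kappa/2 > 0$, so $\xi \in \M_0^{(j)}$, while the $X$-marginal is still $\mu$ (the splicing only alters the $Y$-coordinate on a $\kappa$-density set, and one takes $\kappa$ small), the entropy inequality $h_{\pi_2^*(\xi)}(S) \geq h_\mu(T)$ survives because changing a density-$\kappa$ fraction of the joining changes $\pi_2^*$ by at most $O(\kappa)$ in $\metricstar$ and, by upper semicontinuity plus a lower-semicontinuity-of-entropy-along-the-construction argument (or more simply, by first arranging the perturbed joining to still dominate a copy of $(X,T,\mu)$), the entropy does not drop below $h_\mu(T)$, and $\metricstar(\xi,\xi_0) \leq C\kappa < \e$. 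The delicate points to get right are: ensuring the perturbed joining is still a $\mu$-joining on the nose (not just approximately) — this can be arranged by splicing along a Rokhlin-tower structure for $(X,T,\mu)$ so that the $X$-coordinate is genuinely governed by $\mu$ — and ensuring the entropy constraint is not violated, which is the reason one keeps $\kappa$ small and appeals to upper semicontinuity of $\mu\mapsto h_\mu(S)$ from Lemma \ref{lem:Misiurewicz} together with the fact that $\pi_2^*(\xi_0)$ already satisfies the inequality strictly-or-with-room after an initial harmless perturbation. The combinatorial construction of the spliced orbit is essentially the same one used in proving part \eqref{dense} of Proposition \ref{enl-open}, so in the write-up I would phrase it as a minor variant of that argument.
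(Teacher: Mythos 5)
Your decomposition of $\M_0'$ into a countable intersection over a countable base of open sets, and your openness argument, match the paper exactly (the paper calls these sets $C_n$). The overall perturbation strategy for density — splice in segments near a fixed $y^*\in V_j$ at small density $\kappa$, invoke almost weak specification to realize the spliced pseudo-orbit by a true orbit, pass to an ergodic component — is also the right idea and is what the paper does.

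However, your entropy-preservation argument has a genuine gap. You appeal to \emph{upper} semicontinuity of $\nu \mapsto h_\nu(S)$ (Lemma~\ref{lem:Misiurewicz}) to conclude that $h_{\pi_2^*(\xi)}(S)$ does not drop below $h_\mu(T)$, but upper semicontinuity is the wrong direction for this: it tells you entropy cannot jump \emph{up} in the weak$^*$ limit, and is precisely the property that allows entropy to \emph{drop} under perturbation. What you actually need is a \emph{quantitative} lower bound on $h_{\pi_2^*(\xi)}(S,\mathcal Q)$ in terms of how little the perturbation changes the $\mathcal Q$-process. The paper gets this by first applying Lemma~\ref{up} to replace $\xi_0$ by $\xi_1$ with strict slack $h_{\pi_2^*(\xi_1)}(S) > h_\mu(T)$ (this corresponds to your phrase ``after an initial harmless perturbation'', which is correct), then choosing a partition $\mathcal Q$ with zero-measure boundary and $h_{\pi_2^*(\xi_1)}(S,\mathcal Q) > h_\mu(T)$, bounding $\metricbar(\pi_2^*(\xi_1),\mathcal Q;\ \pi_2^*(\xi'),\mathcal Q) < 2\delta$ directly from the construction, and invoking Rudolph's estimate (Lemma~\ref{phi-en}) to conclude $h_{\pi_2^*(\xi')}(S,\mathcal Q) > h_\mu(T)$. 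Your alternative suggestion of ``arranging the perturbed joining to still dominate a copy of $(X,T,\mu)$'' does not obviously help either, since the constraint in $\M_0$ is on the $Y$-marginal entropy $h_{\pi_2^*(\xi)}(S)$, not on the entropy of the joining or a factor of it. A secondary, smaller imprecision: the paper keeps the $X$-coordinate identically unchanged (the new joining is a factor of $\xi_1 \times \zeta$ on $(X\times Y)\times Z$, with only a fresh $Y$-coordinate appended via Proposition~\ref{interpolation}), so the $X$-marginal stays $\mu$ automatically, whereas your Rokhlin-tower language suggests resampling the $X$-side, which needs more care to avoid disturbing the marginal.
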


We give the proof of Lemma \ref{full-support} in Section \ref{section-up}.

\begin{proof}[Proof of Theorem \ref{result-gen}]
By Lemma \ref{kriegerized}, we may assume that $(X,T,\mu)$ is a non-trivial ergodic subshift.
By Lemma \ref{baire-lemma}, $\mathcal M_0$ is a Baire space, and by  Proposition
\ref{enl-open} \eqref{open} and \eqref{dense},  we have that
$\mathcal E := \bigcap_{n\geq 1} E^n_{\mu}$ is a (non-empty) dense subset
of $\mathcal M_0$; furthermore, by Lemma \ref{full-support}, $\M_0' \cap \mathcal E$
is also a (non-empty) dense subset of $\mathcal M_0$.
It follows from Proposition \ref{enl-open} \eqref{intersection} that the joinings of
$\mathcal E$ are also embeddings.
\end{proof}

\begin{proof}[Proof of Proposition  \ref{enl-open} \eqref{open}]
If $\xi\in E^n_\mu$,
then each element, $X\times A$ with $A\in \sigma(\mathcal Q_\ell)$
agrees with an element $B\times Y$ with
$B\in\bigvee_{i\in\Z}T^{-i}\mathcal P$ up to a
symmetric difference of measure strictly less than $1/n$; moreover,
since $\bigvee_{i\in\Z}T^{-i}\mathcal P$ is the limit of
$\bigvee_{|i|\le m}T^{-i}\mathcal P$, there is a  finite  $m$ such that for all
 $A$ in the finite set $\sigma(\mathcal{Q}_{\ell})$, there is a corresponding clopen set
$B_A\in\bigvee_{i=-m}^mT^{-i}\mathcal P$ such that
\begin{equation}
\label{per}
\xi((X\times A)\bigtriangleup (B_A\times Y))<1/n.
\end{equation}

  Note that for  a fixed $C \in X \times Y$, the map $\xi' \mapsto \xi'(C)$ is continuous at all points $\xi'$ with $\xi'(\partial C) = 0$.  Thus    inequality \eqref{per} persists for all sufficiently small perturbations of $\xi$, if  $\xi(X \times \partial A) =\pi_2^*(\xi)(\partial A)=0$ and $\xi(\partial B_A \times Y) =\pi_1^*(\xi)(\partial B_A)=0$; the latter property holds since $B_A$ is a clopen set.
  For the former property, note that for all $\xi' \in \mathcal M_0$ we have  $\xi'(X\times \partial A)=0$, since $\pi_2^*(\xi')$
is an $S$-invariant measure and by the  small boundary property, we have
$\pi_2^*(\xi')(\partial A) = 0$.

The openness of the second condition
is proved similarly (using the fact that each element of $\bigvee_{|i|\le k}S^{-i}\mathcal Q_\ell$ has
boundary of measure 0 for each $k$ and $\ell$ and every invariant measure).
\end{proof}

\begin{proof}[Proof of Proposition \ref{enl-open} \eqref{intersection}]
The proof follows from Proposition \ref{rue}.
\end{proof}

The remainder of this paper is dedicated to proving Proposition \ref{enl-open} \eqref{dense}
and Lemma \ref{full-support}.
We choose a $\xi\in \mathcal M_0$ and a weak$^*$-neighbourhood of $\xi$. We
show the denseness in several stages.    Firstly it will be convenient to perturb
$\xi$ so that $h_{\pi_2^*(\xi)}(S)$ strictly
exceeds $h_{\mu}(T)$ (it is already at  least $h_{\mu}(T)$ by assumption).
We do this in Section \ref{section-up}.
Similar techniques will also be used to prove Lemma \ref{full-support}.
Secondly, using the fact that $h_{\pi_2^*(\xi)}(S)>h_{\mu}(T)$, we build a mapping from blocks of $X$
into separated orbit segments in $Y$.   In a third stage, we put this together with some
marker blocks in $Y$, verify that the weak$^*$-closeness is satisfied, the entropy condition
still holds and show that the almost embedding property is satisfied.

  The crucial property that permits us to make our constructions is almost weak specification, and in the next section we prove a proposition which serves as a basis for several of our constructions.

\section{Specification}

Let $(X, \mu, T)$ be a non-trivial ergodic subshift with invariant measure $\mu$.
Let $S$ be a self-homeomorphism of a  compact metric space $(Y, \metric)$ satisfying almost weak specification.  The principal task in this paper will be constructing $\mu$-joinings on $X\times Y$.
In fact, we will make three distinct constructions at different points of the
proof (to raise entropy, ensure full support of the $Y$-marginal,
and to create an approximate factor map). The method has a great deal of flexibility.   We rely crucially on the
specification properties of $Y$, which allows us to concatenate segments of
$Y$ orbit and interpolate them.

In all three cases, we are looking for a perturbation of an existing ergodic
$\mu$-joining.   Another common feature is that we leave the $X$ part of the joining
alone, and just modify the $Y$ part.   Finally, all three constructions make use
of external randomization: we take a product of what we are starting from (in two of
the cases, a previously existing joining; and in the third, the measure $\mu$
on $X$) with a mixing process. The new joining is obtained as a factor of this
product.    See Figure \ref{fig:one} for an illustration.

\begin{figure}
\includegraphics[width=4in]{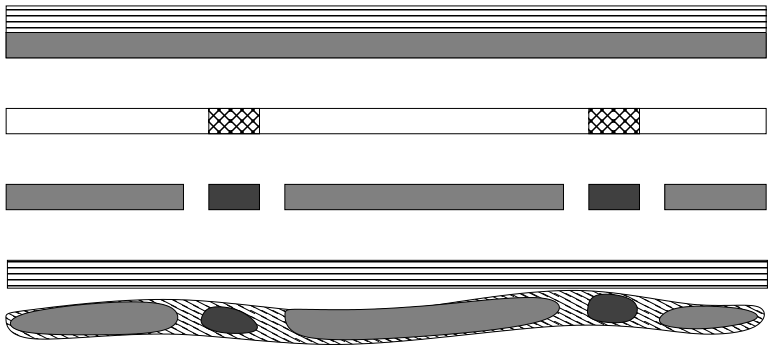}
\caption{The top part of the diagram represents a pair $(x,y)\in X\times Y$, taken from the
joining $\xi$. The next row is $z$, the independent randomness. The third row
depicts the $Y$ pseudo-orbit that we wish to shadow, which we can think of as the output of a function $\psi(x,y,z)$; and the bottom row is the
pair $(x,\tilde y)$ obtained by combining the original $x$ with the true $Y$ orbit obtained from
the specification.}\label{fig:one}

\end{figure}

The starting point of all three constructions is the following proposition.
 Given two  sequences of points
$u = (y_i)$, $v = (y_i')$ of $Y$, and $r >0$,
we say that  {\df $\boldsymbol{u}$ $\boldsymbol{r}$-shadows $\boldsymbol{v}$
in $\boldsymbol{I}$}   if $\metric(y_i, y_i') \leq r$ for all $i \in I$.  For convenience, we  will make use of a special symbol  $\nullcon \not \in Y$ (with $\nullcon$  standing for `vacuous').

\begin{proposition}[Interpolation]
\label{interpolation}
Let $S$ be a self-homeomorphism with almost weak specification
 on a  compact metric space  $(Y, \metric)$.
Let $(W,  \upsilon, U)$ be an ergodic measure-preserving system.    Let $\nullcon \not \in Y$.  Let $r>0$, and $L_{r}$ be a corresponding gap function.      Let $N >0$.  Let $\psi:W \to (Y \cup {\nullcon}) ^{\Z}$ be a measurable function such that for $\upsilon$ almost all $w \in W$:
\begin{enumerate}
\item
\label{orbit}
for all $i \in \Z$,  if $\psi(w)_i \in Y$, then either $S(\psi(w)_i) = \psi(w)_{i+1}$ or $\psi(w)_{i+1} = \nullcon$;
\item
\label{block-size}
  the sequence $\psi(w)$ never contains more than  $N$ consecutive elements of $Y$;
 \item
  \label{spec-garbage}
  all occurrences of $\nullcon$ always appear in blocks of size at least $L_r(N)$; that is, if $\psi(w)_i = \nullcon$, then there are integers $a \leq i \leq b$ such that $b-a \geq L_r(N)$ and for all $ a \leq j \leq b$ we have $\psi(w)_j = \nullcon$.
\end{enumerate}
  Then there exists an ergodic measure $\varrho$ on $W \times Y$ that is preserved by the product transformation $U \times S$, has $\upsilon$ as its projection on $W$, and has the property that  for $\varrho$ almost all $(w,y)$  the sequence $(S^n y)_{n \in \Z}$ $r$-shadows $\psi(w)$  in the set $I$ of all integers $k$ such that  $\psi(w)_k \in Y$.
\end{proposition}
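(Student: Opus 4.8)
The plan is to build the measure $\varrho$ on $W \times Y$ as a factor of a natural skew-product, realizing the shadowing via the almost weak specification property applied blockwise. First I would make the pseudo-orbit structure explicit: for $\upsilon$-a.e.\ $w$, conditions \eqref{orbit}--\eqref{spec-garbage} decompose the index set $\Z$ into a bi-infinite alternating sequence of $Y$-blocks (maximal runs of indices $k$ with $\psi(w)_k \in Y$, each of length at most $N$ and, by \eqref{orbit}, consisting of a genuine forward $S$-orbit segment $\psi(w)_a, S\psi(w)_a, \dots, S^{b-a}\psi(w)_a$) separated by $\nullcon$-blocks of length at least $L_r(N)$. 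The specification property (with parameter $r$ and gap function $L_r$) says precisely that given finitely many orbit segments each of length at most $N$, separated by gaps of length at least $L_r(N)$, there is a single point of $Y$ whose orbit $r$-shadows all of them on the prescribed index intervals. Passing to a bi-infinite concatenation requires a standard compactness argument: take the shadowing points $y^{(m)}$ for the truncation of $\psi(w)$ to $[-m,m]$, and extract a subsequential limit in the compact space $Y$; the limit point $\tilde y(w)$ then $r$-shadows all of $\psi(w)$ on $I$ simultaneously, since each constraint $\metric(S^k \tilde y, \psi(w)_k) \le r$ is closed and eventually present. This gives, for a.e.\ $w$, a nonempty closed set of valid shadowing points.

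The measurability and invariance are where I would introduce the external randomization. Rather than selecting $\tilde y(w)$ by a measurable-selection theorem (which is possible but gives no control over invariance), I would instead consider, for each $w$, the collection of valid shadowing points, and build a $U \times S$-invariant measure by a ``gluing along markers'' construction driven by the auxiliary system $(W, \upsilon, U)$ itself (or a mixing extension of it): the $\nullcon$-blocks act as separators that let us choose the shadowing segment within each $Y$-block independently, and then patch together using specification. Concretely, I would define a map $\Phi: W \to W \times Y$ of the form $\Phi(w) = (w, \tilde y(w))$ where $\tilde y(w)$ is produced by a measurable, $S$-consistent selection of shadowing points that respects the block structure — more carefully, one works on a suspension/tower over the return-time structure of the $\nullcon$-blocks so that $\tilde y(T_W w)$ and $S\tilde y(w)$ agree wherever both are governed by the same $Y$-block. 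Push forward $\upsilon$ under $\Phi$ to get a $(U\times S)$-invariant measure $\varrho$ with $W$-marginal $\upsilon$. Ergodicity of $\varrho$ follows because $\varrho$ is a factor image of the ergodic system $(W,\upsilon,U)$ under the equivariant map $\Phi$ (or, if a mixing extension $W \times Z$ is used to break ties in the selection, because that product is ergodic and $\varrho$ is its factor); the shadowing property holds $\varrho$-a.e.\ by construction.

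The main obstacle, and the part deserving the most care, is making the shadowing selection $w \mapsto \tilde y(w)$ simultaneously (a) measurable, (b) genuinely $S$-equivariant along the $W$-dynamics so that the pushforward is $U \times S$-invariant, and (c) consistent across the overlapping compactness limits used to pass from finite to bi-infinite concatenations. The tension is that specification only gives existence of a shadowing point, with no canonical or equivariant choice; the resolution is exactly the role of the $\nullcon$-gaps of length $\ge L_r(N)$ — they decouple consecutive $Y$-blocks, so one can define the selection block-by-block using a measurable choice within each block (e.g.\ via a measurable selector from the Effros Borel structure on closed subsets of $Y$, or by taking a limit of explicitly-indexed specification points) and the equivariance becomes bookkeeping on the tower over the $\nullcon$-block return map. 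I expect the finite-to-infinite compactness step and the verification that the resulting $\varrho$ really is $(U\times S)$-invariant (not merely that $\tilde y$ shadows correctly) to consume most of the argument; everything else is routine.
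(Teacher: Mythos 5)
Your proposal diverges from the paper's proof at precisely the point you yourself flag as the main obstacle: producing a \emph{measurable and $S$-equivariant} selection $w\mapsto \tilde y(w)$ from the fiberwise shadowing sets $A_w:=\{y\in Y:\ (S^ny)_n\ r\text{-shadows }\psi(w)\}$, so that $\tilde y(Uw)=S\tilde y(w)$ a.e.\ and the pushforward of $\upsilon$ under $\Phi(w)=(w,\tilde y(w))$ is automatically $(U\times S)$-invariant. That obstacle is genuine, and the tower heuristic you sketch does not dispose of it. Inducing on the return map to the start of a $\nullcon$-block and choosing $\tilde y(w)$ measurably at each base point (via a Kuratowski--Ryll-Nardzewski selector from the compact-valued map $w\mapsto A_w$), then extending up the tower by $\tilde y(U^kw):=S^k\tilde y(w)$, does produce a measurable map; but when the orbit returns to the base, the fresh selector at the point $U^{n(w)}w$ has no reason to return $S^{n(w)}\tilde y(w)$. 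Equivariance of the constraint sets, $A_{U^n w}=S^n A_w$, guarantees that $S^{n(w)}\tilde y(w)$ is a \emph{valid} choice, but nothing forces the selector to make that choice, and forcing it would amount to choosing one representative per orbit measurably, i.e.\ to a measurable transversal, which does not exist for an aperiodic ergodic system. ``Patching using specification'' is also misleading: specification produces a new shadowing point satisfying all constraints at once, it does not extend a previously chosen shadowing point to a larger set of constraints. So, as written, $\Phi_*\upsilon$ is not shown to be invariant.

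The paper's proof sidesteps the selection problem entirely and is much shorter. Pick a single $\upsilon$-generic point $\mathbf w$; use specification together with compactness of $Y$ (exactly your finite-window truncation and subsequential-limit step) to produce one bi-infinitely shadowing point $\tilde y\in Y$; and then apply the Krylov--Bogolyubov construction to the single orbit $\bigl((U\times S)^k(\mathbf w,\tilde y)\bigr)_{k\in\Z}$. A weak$^*$ limit $\tilde\varrho$ of the Ces\`aro averages is $(U\times S)$-invariant, its $W$-marginal is $\upsilon$ because $\mathbf w$ is generic, and it concentrates on the shadowing set because that set is $(U\times S)$-invariant and contains the whole orbit; passing to an ergodic component of $\tilde\varrho$ preserves all of these invariant, almost-everywhere properties and keeps the $W$-marginal equal to $\upsilon$ by ergodicity of $\upsilon$. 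No measurable selection, no equivariance at the level of individual points, and no external randomization is needed \emph{inside} this proposition --- the auxiliary mixing process that you invoke belongs to the \emph{applications} of the proposition, where it is used to build the map $\psi$. In short: your first paragraph matches the paper and is correct, but your second paragraph replaces a three-line Krylov--Bogolyubov argument with a harder (and, as outlined, incomplete) equivariant-selection construction.
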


\begin{proof}
Consider a point $\mathbf{w} \in W$ that is $\upsilon$-generic.   Almost weak specification and the assumptions on $\psi$ imply that there exists a $\mathbf{{\tilde {y}}} \in Y$ such that the sequence $(S^n \mathbf{{\tilde {y}}})_{n \in \Z}$ $r$-shadows $\psi(\mathbf{w})$ in the set $I$.    Thus by standard arguments (see the proof of the Krylov-Bogolyubov theorem
\cite[Page 97]{Glasner}) there exists a $(U \times S)$-invariant measure $\tilde \varrho$ on $W \times Y$ such that for $\tilde \varrho$ almost all $(w,y)$ we have that the sequence $(S^n y)_{n \in \Z}$ $r$-shadows $\psi(w)$  in the set $I$.  Let $\varrho$ be an ergodic component of $\tilde \varrho$.  Since $\upsilon$ is ergodic and the original point $\mathbf{w}$ was chosen to be generic, the projection of  $\varrho$ on $\Omega$ is $\upsilon$.  Thus  $\varrho$ has the desired properties.
\end{proof}

\section{Upping the entropy}
\label{section-up}

\begin{lemma}[Perturbation to increase marginal entropy]
\label{up}
    Let $(X,  \mu, T)$ be a non-trivial ergodic subshift with invariant measure
    $\mu$. Let $S$ be a
    self-homeomorphism with almost weak specification on a compact metric space $Y$. 
    Let  $h_\mu(T) < h_\mathrm{top}(S)$.
     Let $\xi\in \mathcal M_0$ 
    and let $V$ be a weak$^*$-neighbourhood of $\xi$ in $\mathcal M_0$. Then there exists a $\xi'\in V$
    such that $h_{\pi_2^*(\xi')}(S)>h_{\mu}(T)$.
\end{lemma}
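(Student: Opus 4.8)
The plan is to apply the Interpolation Proposition (Proposition~\ref{interpolation}) with a cleverly chosen auxiliary system and shadowing function, using external randomness supplied by a high-entropy mixing system. First I would fix a mixing measure-preserving system $(W, \upsilon, U)$ of entropy strictly greater than $h_{\mathrm{top}}(S) - h_\mu(T)$; for concreteness one can take a Bernoulli shift on a suitable finite alphabet, but any mixing system of sufficiently large entropy works. The idea is that the $Y$-marginal of the new joining will carry, in addition to whatever entropy $\pi_2^*(\xi)$ already had, some extra entropy imported from $W$, pushing $h_{\pi_2^*(\xi')}(S)$ strictly above $h_\mu(T)$ — but of course we only need to exceed $h_\mu(T)$, and since $h_{\pi_2^*(\xi)}(S) \ge h_\mu(T)$ already, even a tiny positive increment suffices, which gives us a lot of freedom.

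Next I would set up the shadowing function. Choose $r>0$ small (to be determined by the weak$^*$-neighbourhood $V$), let $L_r$ be a corresponding gap function, and pick a block length $N$ large. Take a finite collection of orbit segments of $Y$ of length $N$ — obtained, say, by partitioning $Y$ with a fine partition and selecting representative points — indexed by the symbols of $W$'s alphabet, so that choosing which segment to use at each stage encodes roughly $\tfrac1N \log(\#\text{segments})$ bits per coordinate; by taking $N$ large and the partition fine this rate can be made to exceed $h_\mu(T)$ while keeping each chosen point within $r$ of a genuine $Y$-orbit. The function $\psi: W \to (Y\cup\nullcon)^{\Z}$ then reads off the $W$-orbit in blocks of length $N$, emits the corresponding length-$N$ orbit segment of $Y$, and inserts a spacer block of $\nullcon$'s of length exactly $L_r(N)$ between consecutive segments, so conditions~\eqref{orbit}, \eqref{block-size}, \eqref{spec-garbage} of Proposition~\ref{interpolation} hold. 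Applying the Interpolation Proposition produces an ergodic $(U\times S)$-invariant measure $\varrho$ on $W\times Y$ with $W$-marginal $\upsilon$ whose $Y$-coordinate $r$-shadows $\psi(w)$ on the (density $N/(N+L_r(N))$) set $I$. Since $L_r(N)/N \to 0$ by almost weak specification, this density is close to $1$ for $N$ large.

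From $\varrho$ I would build the candidate $\xi'$ by forming the joining of $(X,\mu,T)$ with the $Y$-process given by $\pi_Y^*(\varrho)$ — more precisely, I would take the product of $\xi$ with $(W,\upsilon,U)$ (leaving the $X$-part of $\xi$ untouched, as the paper emphasizes), apply $\psi$ and the interpolation to produce the shadowed $Y$-orbit $\tilde y$, and let $\xi'$ be the distribution of $(x, \tilde y)$; this is an ergodic $\mu$-joining. Two things must then be checked. \textbf{Weak$^*$-closeness}: because $\tilde y_k$ is within $r$ of $\psi(w)_k$ whenever $k\in I$, and $I$ has density close to $1$, and on the rare complementary coordinates $\tilde y$ is an arbitrary point of the compact space $Y$, one shows $\metricstar(\xi',\xi)$ is small when $r$ is small, $N$ large — the Lipschitz-$1$ test functions of Lemma~\ref{weak-metric} change by at most $r$ on a $(1-o(1))$-density set and by at most $1$ on the rest, so the Ces\`aro-type estimate controls $|\xi'(f)-\xi(f)|$; choosing $r$ and $N$ appropriately puts $\xi'\in V$. \textbf{Entropy}: here one shows $h_{\pi_2^*(\xi')}(S) \ge (\text{rate of }\psi) - o(1) > h_\mu(T)$, because the map $w \mapsto \tilde y$ is, up to the small-density spacer coordinates, finite-to-one with bounded multiplicity onto its image (distinct $W$-symbols give $r$-separated — hence distinct — orbit segments if the partition is fine relative to $r$), so the entropy of the $Y$-factor is at least that of the chosen $W$-sub-process modulo the negligible spacer contribution; alternatively, and more robustly, one uses that $\pi_2^*(\xi')$ dominates a measure under which $S^{N+L_r(N)}$ has a Bernoulli-like factor of entropy $> (N+L_r(N)) h_\mu(T)$.

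The main obstacle I anticipate is the entropy lower bound: one must argue carefully that passing to the $Y$-marginal and shadowing (rather than exactly reproducing) the prescribed segments does not destroy the injectivity/bounded-multiplicity of the coding $w\mapsto \tilde y$, and that the $\nullcon$-spacers — whose $Y$-values are essentially uncontrolled — contribute only $o(1)$ to the entropy rate; this requires choosing the partition fine enough relative to $r$ so that $r$-shadowing preserves the symbol, and invoking that $L_r(N)/N\to 0$ so the spacer fraction is negligible. The weak$^*$ estimate, the verification of the three hypotheses of Proposition~\ref{interpolation}, and the ergodicity of $\xi'$ are comparatively routine given the machinery already assembled.
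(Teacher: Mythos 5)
Your plan goes wrong at a structural level: the shadowing function $\psi$ you build ignores the original $Y$\nobreakdash-coordinate of the joining $\xi$ entirely. As you describe it, $\psi\colon W \to (Y\cup\nullcon)^\Z$ reads only the $W$-orbit and emits orbit segments of $Y$ indexed by $W$-symbols. Consequently the shadowed orbit $\tilde y$ produced by the Interpolation Proposition is $r$-close to $\psi(w)$ --- a fresh orbit dictated by the external randomness --- and has no relation whatsoever to the original $y^{(1)}$ from $\xi$. Your weak$^*$ estimate then silently substitutes ``$\tilde y_k$ is within $r$ of $\psi(w)_k$'' for the claim you actually need, ``$\tilde y_k$ is within $r$ of $S^k y^{(1)}$,'' and the second does not follow from the first. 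In fact, since $W$ is taken independent of $(x,y)$ and $\tilde y$ is determined by $w$ alone (up to the small-density spacers), your $\xi'$ will be weak$^*$-close to the \emph{product} joining $\mu\times\pi_2^*(\varrho)$, not to $\xi$ --- so unless $\xi$ happened to already be a product, the perturbation leaves the neighbourhood $V$.

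The paper's proof avoids exactly this by making $\psi$ a function of the original joining data \emph{and} the auxiliary randomness (this is the $\psi(x,y,z)$ of Figure~1). Concretely, one fixes an auxiliary ergodic measure $\lambda$ on $Y$ with $h_\lambda(S) > h_\mu(T)$, forms the product $\zeta\times\xi\times\lambda$ where $\zeta$ is a mixing $\{0,1,2\}$-process supported on blocks $1^{k_1}0^{k_0}2^{k_2}0^{k_0}$ with $k_1/(2k_0+k_1+k_2)\approx 1-\epsilon$, and has $\psi$ emit segments of the \emph{original} orbit $S^n y^{(1)}$ whenever $z_n=1$ and segments of a $\lambda$-generic orbit $S^n y^{(2)}$ whenever $z_n=2$. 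The output $\tilde y$ is then $r$-close to $y^{(1)}$ on a $(1-\epsilon-\gamma)$-density set, which is what gives $\metricstar(\xi,\xi')<3\epsilon$; and the small $\epsilon$-fraction of $\lambda$-segments injects just enough entropy through the $\metricbar$-estimate and Lemma~\ref{phi-en} to get $h_{\pi_2^*(\xi')}(S) > h_\mu(T)$. The essential idea you are missing is that the perturbation must \emph{splice} high-entropy material into the existing $Y$-orbit on a sparse set, not replace the $Y$-orbit wholesale with a fresh coding of external randomness; and correspondingly the entropy source should be an ergodic measure $\lambda$ on $Y$ itself with $h_\lambda(S)>h_\mu(T)$, which exists because $h_\mu(T)<h_{\mathrm{top}}(S)$, rather than an abstract high-entropy system $(W,\upsilon,U)$ whose coding into $Y$-orbit segments you would then have to construct by hand.
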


To prove Lemma \ref{up}, we alter the measure  $\xi$ by the following procedure.   Given an ergodic measure $\lambda$ on $Y$ with $h_{\lambda}(S) > h_{\mu}(T)$,
a  $\xi$-generic  point $(x,y^1)$, and a $\lambda$-generic point $y^2$ that is chosen independently of $(x,y^1)$,
we produce another
point $(x,y^3)$, where $y^3$ is obtained from  $y^1$ by splicing segments
of $y^1$ with segments of $y^2$.   This is made possible by Proposition \ref{interpolation} and a careful choice of parameters  that are used to define
perturbation of $\xi$ so that we have an entropy increase, but
remain within a prescribed  weak$^*$ neighborhood.   The proof may appear somewhat technical
because an arbitrary generating partition for $Y$ may
not have any relation to the specification properties of $Y$, and we also need to construct the
perturbation so that we can verify the
required properties.

We make use of the d-bar distance that was introduced by Ornstein \cite{MR0447525}.  We first define the d-bar distance between two processes, and then extend the definition to general measure-preserving systems.
By a {\df joining} of two probability measure spaces each endowed with a transformation,
we mean a probability measure on the product space that is
invariant with respect to the product transformation and has coordinate projections equal to the original measures.
Let $(\mathrm{\Sigma},\theta)$ be the full-shift on a finite number of symbols,  endowed with    invariant measures $\mu_1$ and  $\mu_2$.   Let
\begin{equation}
\metricbar(\mu_1, \mu_2) := \inf_{\xi \in J(\mu_1, \mu_2)} \int \mathbf{1}[\mathrm{\omega}_0 \not= \mathrm{\omega}'_0] d\xi(\mathrm{\omega},\mathrm{\omega}'),
\end{equation}
where $J(\mu_1, \mu_2)$ denotes the set  joinings of $(\mathrm{\Sigma},\mu_1,\theta)$ and $(\mathrm{\Sigma}, \mu_2, \theta)$.

More generally, let $U$ be an invertible measure-preserving transformation  of  $(Z, \nu)$, and let
 $\mathcal P$ be a measurable partition with cardinality $| {\mathcal P}|$.
  For each $z \in Z$, let
$\mathcal P(z) \in \mathcal P$ denote the part that contains $z$.
The {\df $\mathcal {P}$-name} is the  function $\mathcal{\bar{P}}:Z \to \ns{1, \ldots, |{\mathcal P}|}^{\Z}$  given by
$\mathcal{\bar{P}}(z)_i := \mathcal P(U^{i}z)$.   Thus
$\nu^{\mathcal P}:=\nu \circ \mathcal{\bar{P}}^{-1}$ is a shift-invariant measure on
the full-shift of $|{\mathcal P}|$ symbols.
We may define the d-bar distance between two invertible measure-preserving transformations
$(Z_1, \nu_1, U_1)$ and $(Z_2, \nu_2, U_2)$
endowed with finite  measurable partitions $\mathcal P$ and $\mathcal Q$ by considering $\nu_1^{\mathcal P}$ and $\nu_2^{\mathcal P}$ to be shift-invariant
measures on the full-shift of $\max(|{\mathcal P}|,|{\mathcal Q}|)$ symbols, and setting
\begin{equation}
\metricbar(\nu_1, \mathcal P ;   \nu_2, \mathcal Q) := \metricbar(\nu_1^{\mathcal P}, \nu_2^{\mathcal Q}).
\end{equation}

We define $\mathcal H:(0,1) \to \R$ via
\begin{equation}
\label{Phi}
 \mathcal H(\eta):=-\eta\log\eta-(1-\eta)\log(1-\eta).
\end{equation}

\begin{lemma}
\label{phi-en}
Let  $(X, \mu, T)$ and $(Z, \nu, U)$ be two invertible ergodic measure preserving transformations
endowed with measurable partitions $\mathcal P$ and $\mathcal Q$.  Let $\eta >0$.  If
$\metricbar(\mu, \mathcal P;  \nu, \mathcal Q) < \eta$ and $| \mathcal P| \geq |\mathcal Q|$, then
\[
|h_{\mu}(T,\mathcal P)- h_{\nu}(U, \mathcal Q)| \leq \mathcal H(\eta) + \eta\log|\mathcal P|.
\]
\end{lemma}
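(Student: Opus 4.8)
The plan is to relate the $\metricbar$-distance to an explicit joining, transfer to that joining via the Kolmogorov--Sinai entropy of a partition, and then apply Fano-type inequality for the conditional entropy. First I would unwind the definitions: by hypothesis we may regard $\mu^{\mathcal P}$ and $\nu^{\mathcal Q}$ as shift-invariant measures on $\mathrm{\Sigma} = \{1,\ldots,|\mathcal P|\}^{\Z}$ (padding the symbol alphabet of $\mathcal Q$ up to $|\mathcal P|$ costs nothing since we only add unused symbols). Since $\metricbar(\mu^{\mathcal P}, \nu^{\mathcal Q}) < \eta$, there is a joining $\xi \in J(\mu^{\mathcal P}, \nu^{\mathcal Q})$ of the two shift systems with $\xi\big(\{\omega_0 \ne \omega_0'\}\big) < \eta$. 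Writing $\theta$ for the shift on $\mathrm{\Sigma}\times\mathrm{\Sigma}$ and $\mathcal R_1, \mathcal R_2$ for the time-zero coordinate partitions of the two factors, the point of this joining is that $h_{\mu}(T,\mathcal P) = h_{\mu^{\mathcal P}}(\theta,\mathcal R_1) = h_{\xi}(\theta, \mathcal R_1)$ (the first equality because $\mathcal P$-names conjugate $(X,T,\mathcal P)$ to $(\mathrm{\Sigma},\theta,\mathcal R_1)$ on a set of full $\mu$-measure, the second because the $\mathrm{\Sigma}$-marginal of $\xi$ in the first coordinate is $\mu^{\mathcal P}$), and likewise $h_{\nu}(U,\mathcal Q) = h_{\xi}(\theta, \mathcal R_2)$.

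Next I would bound $|h_{\xi}(\theta,\mathcal R_1) - h_{\xi}(\theta,\mathcal R_2)|$ using the standard estimate that relates the entropies of two partitions by the conditional entropy of one given the other. Concretely, $h_{\xi}(\theta,\mathcal R_1) \le h_{\xi}(\theta, \mathcal R_1 \vee \mathcal R_2) \le h_{\xi}(\theta, \mathcal R_2) + H_{\xi}(\mathcal R_1 \mid \mathcal R_2)$, and symmetrically with the roles swapped; so the absolute difference is at most $\max\big(H_{\xi}(\mathcal R_1 \mid \mathcal R_2), H_{\xi}(\mathcal R_2 \mid \mathcal R_1)\big)$. It therefore suffices to show $H_{\xi}(\mathcal R_1 \mid \mathcal R_2) \le \mathcal H(\eta) + \eta\log|\mathcal P|$ (and the same with indices swapped, which is identical since the bound is symmetric in the alphabet sizes once both are $\le |\mathcal P|$). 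Let $D = \{\omega_0 \ne \omega_0'\}$, so $\xi(D) < \eta$, and let $g$ be the indicator of $D$, a two-valued random variable. Then $H_{\xi}(\mathcal R_1 \mid \mathcal R_2) \le H_{\xi}(\mathcal R_1 \mid \mathcal R_2, g) + H_{\xi}(g) \le H_{\xi}(\mathcal R_1 \mid \mathcal R_2, g) + \mathcal H(\xi(D))$. On the event $\{g = 0\}$ we have $\omega_0 = \omega_0'$, so $\mathcal R_1$ is determined by $\mathcal R_2$ and contributes zero conditional entropy there; on the event $\{g=1\}$, which has mass $\xi(D) < \eta$, the conditional entropy of $\mathcal R_1$ is at most $\log|\mathcal P|$. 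Hence $H_{\xi}(\mathcal R_1 \mid \mathcal R_2, g) \le \xi(D)\log|\mathcal P| < \eta\log|\mathcal P|$. Since $\mathcal H$ is increasing on $(0,1/2]$ and $\xi(D)<\eta$ — and we may assume $\eta \le 1/2$, as otherwise the bound is vacuous because $\mathcal H(\eta) \ge \log 2 > |h_\mu(T,\mathcal P) - h_\nu(U,\mathcal Q)|$ is false in general but $\eta\log|\mathcal P|$ alone already dominates when $|\mathcal P|\ge 2$; I would just note $\mathcal H(\xi(D)) \le \mathcal H(\eta)$ using monotonicity and handle the degenerate small-alphabet case directly — we get $H_{\xi}(\mathcal R_1\mid\mathcal R_2) \le \mathcal H(\eta) + \eta\log|\mathcal P|$. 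Combining the two directions yields the claimed inequality.

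The main obstacle, such as it is, is bookkeeping rather than depth: one must be careful that $\metricbar$ as defined uses the full shift on a common alphabet of size $\max(|\mathcal P|,|\mathcal Q|)$, so the passage from abstract systems to the symbolic picture is genuinely an equality of entropies and not merely an inequality, and one must make sure the Fano-type bound is applied with the correct (larger) alphabet size $|\mathcal P|$ appearing. The one genuinely delicate point is monotonicity of $\mathcal H$: it is increasing only on $(0,1/2]$, so I would either restrict to $\eta \le 1/2$ at the outset (noting the statement is only interesting there) or replace $\mathcal H(\xi(D))$ by $\mathcal H(\eta)$ via the elementary inequality $\mathcal H(s) \le \mathcal H(t)$ valid for $s \le t \le 1/2$, together with the observation that for $\eta \ge 1/2$ one has $\mathcal H(\eta) + \eta\log|\mathcal P| \ge \log 2 + \tfrac12\log|\mathcal P|$, which exceeds $\log\max(|\mathcal P|,|\mathcal Q|) \ge |h_\mu(T,\mathcal P) - h_\nu(U,\mathcal Q)|$ whenever $|\mathcal P| \ge 2$, and the case $|\mathcal P| = 1$ is trivial.
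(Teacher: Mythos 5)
The paper does not actually prove this lemma; it simply cites Rudolph's book (\cite[Theorem 7.9]{Rud}). Your argument is the standard one you would find there: realize the $\metricbar$-distance by a joining $\xi$ of the two symbolic systems, pass through Rokhlin's inequality $h_\xi(\theta,\mathcal R_1)\le h_\xi(\theta,\mathcal R_2)+H_\xi(\mathcal R_1\mid\mathcal R_2)$ and its symmetric counterpart, and then bound the conditional entropy by a Fano-type estimate using the error indicator $g=\mathbf 1_D$. All of that is correct, and the bookkeeping about padding $\mathcal Q$'s alphabet up to size $|\mathcal P|$ is exactly the right thing to watch.

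There is, however, a genuine numerical error in your handling of large $\eta$. Having obtained $H_\xi(\mathcal R_1\mid\mathcal R_2)\le \mathcal H(p)+p\log|\mathcal P|$ with $p:=\xi(D)<\eta$, you need $\mathcal H(p)+p\log|\mathcal P|\le\mathcal H(\eta)+\eta\log|\mathcal P|$, and you try to get this from monotonicity of $\mathcal H$ together with the claim that for $\eta\ge\tfrac12$ one has $\mathcal H(\eta)+\eta\log|\mathcal P|\ge\log 2+\tfrac12\log|\mathcal P|\ge\log|\mathcal P|$. The last inequality $\log 2+\tfrac12\log|\mathcal P|\ge\log|\mathcal P|$ is equivalent to $|\mathcal P|\le 4$, so it fails whenever $|\mathcal P|\ge 5$; and the first inequality also fails, since $\mathcal H$ is \emph{decreasing} past $\tfrac12$. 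The object whose monotonicity you actually need is not $\mathcal H$ but $f(p):=\mathcal H(p)+p\log|\mathcal P|$. A direct computation ($f'(p)=\log\frac{1-p}{p}+\log|\mathcal P|$) shows $f$ is increasing on $[0,\,|\mathcal P|/(|\mathcal P|+1)]$ and decreasing afterwards, with $f(1)=\log|\mathcal P|$. So the correct case split is: if $\eta\le |\mathcal P|/(|\mathcal P|+1)$ then $f(p)\le f(\eta)$ by monotonicity of $f$ (note this interval is considerably larger than $[0,\tfrac12]$, which is why your argument ``felt'' fine for small alphabets); and if $\eta>|\mathcal P|/(|\mathcal P|+1)$ then $f(\eta)\ge f(1)=\log|\mathcal P|$, which already dominates $|h_\mu(T,\mathcal P)-h_\nu(U,\mathcal Q)|$ since each term is between $0$ and $\log|\mathcal P|$. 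With this patch the proof is complete and self-contained, which is arguably a small improvement over the paper's bare citation.
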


\begin{proof}
See Rudolph's book \cite[Theorem 7.9]{Rud}
\end{proof}

\begin{proof}[Proof of Lemma \ref{up}]
 If $h_{\pi_2^*(\xi)}(S)>h_{\mu}(T)$, then we are done, so we assume that $h_{\pi_2^*(\xi)}(S)=h_\mu(T)$
 in what follows.

Choose an ergodic measure $\lambda$ on $Y$ satisfying
$h_{\lambda}(S)>h_{{\pi_2^*(\xi)}}(S)=h_{\mu}(T)$.
Choose  $\epsilon \in (0,1)$ so that $V$ contains the ball $B(\xi, 3\epsilon) \cap \M_0$.
Let $\delta=\frac{1}{4}(h_{\lambda}(S)-h_{{\pi_2^*(\xi)}}(S))$.

By a variation of a standard argument (see for example \cite[Lemma 3.2]{Bowen}), we may choose
a partition $\mathcal P=\ns{B_1, \ldots, B_n}$ of $Y$ such that
${\pi_2^*(\xi)}(\partial \mathcal P)=\lambda(\partial \mathcal P)=0$ and
$h_{\lambda}(S,\mathcal P)>h_{\lambda}(S)-\epsilon\delta$ and $h_{{\pi_2^*(\xi)}}(S,\mathcal P)>
h_{{\pi_2^*(\xi)}}(S)-\epsilon\delta$.
Let $\mathcal H$ be as in \eqref{Phi} and choose $\gamma<\epsilon$ so that $ \mathcal H(2\gamma)
+ 2\gamma\log(|\mathcal P| +1) <\epsilon\delta$
and $2\gamma h_\mathrm{top}(S)<\epsilon\delta$.
Let $r<\epsilon$ be chosen so that ${\pi_2^*(\xi)}(\partial_r\mathcal P)<\gamma$ and
$\lambda(\partial_r\mathcal P)<\gamma$.

We now build a   mixing process taking values  in $\{0,1,2\}$.   Choose $N$ sufficiently
large so  that the gap function satisfies
\[
L_r(N) < k_0:= \lfloor (\gamma/2) N \rfloor;
\]
in addition,   we require that
\begin{equation}
\label{time-ratio}
\frac{k_1}{2k_0 +k_1+k_2} > 1- \e - \gamma \ \text{ and } \ \frac{k_2}{2k_0+k_1+k_2} > \e - \gamma,
\end{equation}
where $k_1:=\lfloor (1-\epsilon-\gamma/2)N\rfloor$ and
$k_2:=\lfloor (\epsilon-\gamma/2)N\rfloor$.    The process consists of
concatenations of the blocks
\[1^{k_1}0^{k_0}2^{k_2}0^{k_0} \text{ and } 1^{k_1+1}0^{k_0}2^{k_2}0^{k_0},\]
where the blocks are placed independently with equal probabilities of using
the longer and the shorter block.   (We need to use  values $k_1$ and $k_1 +1$ in order to make $\zeta$ mixing.)
Write $\zeta$ for the measure on $R:=\{0,1,2\}^{\Z}$.

It will be convenient to have notation that will allow us to distinguish various copies of $Y$
in certain product spaces.  Let $Y^{(i)} =Y$ for $i=1,2,3$. Let $\sigma$ be the shift on
$R$ and consider the transformation
$\tau:=\sigma\times(T\times S)\times S$ acting on
$\Omega:=R\times (X\times Y^{(1)})\times Y^{(2)}$, preserving the
(not necessarily ergodic) measure $\zeta\times \xi\times \lambda$.
Define a new partition $\mathcal Q$ on the product by
 $D_i:=\{(z,y^{(1)},y^{(2)})\colon z_0\in\{1,2\}\text{ and }
y^{(z_0)}\in B_i\}$ and $D_0:=\{(z,y^{(1)},y^{(2)})\colon z_0=0\}$.
Note that $|\mathcal P|+1 = |\mathcal Q|$.
The idea is knowing which element of
$\mathcal Q$ a point $(z,y^{(1)},y^{(2)})$ lies in tells us which element of $\mathcal P$ the $z_0$'th copy
of $y$ lives in if $z_0$ is 1 or 2; or else tells us that $z_0=0$. Let $\mathcal R$ denote the
$\sigma$-algebra of $\Omega$ giving information about the $R$ coordinates.

Using the fact that measures ${\pi_2^*(\xi)}$ and $\lambda$ are independently joined in the  product measure $\zeta \times \xi \times \lambda$, we have   using \eqref{time-ratio} and the definition of $\mathcal R$ that
\begin{align*}
  \lim_{M\to\infty}\frac1{M}H_{\zeta\times\xi\times\lambda}&
  \left(\bigvee_{i=0}^{M-1} \tau^{-i}\mathcal Q\right)
  \ge\lim_{M\to\infty}\frac1MH_{\zeta\times\xi\times\lambda}
  \left(\bigvee_{i=0}^{M-1}
    \tau^{-i}\mathcal Q\bigg\vert\mathcal R\right)   \nonumber\\
  &\ge (1-\epsilon-\gamma) h_{\pi_2^*(\xi)}(S,\mathcal
  P)+(\epsilon-\gamma)h_{\lambda}(S,\mathcal P),
\end{align*}
since for each coordinate where $z_i=1$, the expected amount of information revealed is at least $h_{\pi_2^*({\xi})}(S, \mathcal P)$ and for each coordinate where $z_i=2$, the expected amount of information is at least $h_\lambda(S, \mathcal Q)$.

By affineness of entropy and the ergodic decomposition theorem,  we can take an
ergodic component, $\iota$, of $\zeta\times \xi\times \lambda$
such that
\begin{equation}
\label{cross-up}
h_{\iota}(\tau,\mathcal Q) \geq (1-\epsilon-\gamma)h_{\pi_2^*(\xi)}(S,\mathcal P)+
(\epsilon-\gamma)h_{\lambda}(S,\mathcal P).
\end{equation}
Since $\zeta$ is mixing, both $\zeta \times {\pi_2^*(\xi)}$ and  $\zeta \times \lambda$ are ergodic.
Thus the projections of $\iota$ on $R \times Y^{(1)}$ and $R \times Y^{(2)}$
are (still) $\zeta \times {\pi_2^*(\xi)}$ and $\zeta \times \lambda$,
respectively.

By Proposition \ref{interpolation},
there exists a $(\tau \times S)$-invariant measure $\iota'$ on
$ \Omega':=\Omega\times Y^{(3)}$ such that for $\iota'$-a.e.\ point  $\omega'=(z,(x,y^{(1)}),y^{(2)},y^{(3)})$, we have
\begin{equation}
\label{transtoergodic}
\metric(S^ny^{(3)},{S}^n y^{(z_n)})
\leq
r  \text{ for all $n$ such that }z_n\in\{1,2\};
\end{equation}
furthermore, the projection of $\iota'$ on $\Omega$ is $\iota$.

We now define $\tilde \pi(z,(x,y^{(1)}),y^{(2)},y^{(3)}):=(x,y^{(3)})$.
Note that by construction,
$y^{(3)}$ is within $r$ of $y^{(1)}$ whenever $z_0=1$.
Let $\xi':=\tilde\pi^*(\iota')$. Clearly, $\xi'$ is ergodic.

We now give estimates on the weak$^*$ distance between $\xi$ and $\xi'$ and
on the entropy $h_{\pi_2^*(\xi')}(S)$.    Let $f \in \mathrm{Lip}_1(X \times Y)$ (recalling that $f$ takes values in
$[0,1]$ by definition).
Extend $f$ to functions $f^1, f^3$ on $\Omega'$  by setting
$f^1(z,(x,y^{(1)}),y^{(2)},y^{(3)})= f(x, y^{(1)})$ and
$f^3(z,(x,y^{(1)}),y^{(2)},y^{(3)})= f(x,y^{(3)})$.   Also let
\[
D:= \ns{(z,(x,y^{(1)}),y^{(2)},y^{(3)}) \in  \Omega':  z_0 = 1}.
\]
We then have
\begin{align*}
| \xi(f) - \xi'(f)| &= |  \iota' (f^1) -  \iota' (f^3)|  \\
& \leq \iota'(  \mathbf{1}_D| f^1 - f^3|) +  \iota'(  \mathbf{1}_{D^c}| f^1 - f^3|).
\end{align*}
Thus by \eqref{transtoergodic},  we have
\begin{equation}
\label{raw-weakstar}
\metricstar(\xi, \xi') \leq  r + (\epsilon+\gamma) <3\e.
\end{equation}

As for entropy, we estimate it by using \eqref{cross-up} and the $\metricbar$ distance
between $\mathcal P$-name of $y^{(3)}$ and the
$\mathcal Q$-name of $(z,(x,y^{(1)}),y^{(2)},y^{(3)})$.

For $\omega' \in \Omega'$, if $z_0=1$ and $y^{(1)}\not\in \partial_r\mathcal P$,
then $y^{(1)}$ and $y^{(3)}$ lie in the same element of $\mathcal P$.
Similarly if $z_0=2$ and
$y^{(2)}\not\in  \partial_r\mathcal P$, then $y^{(2)}$ and $y^{(3)}$ lie in the same
element of $\mathcal P$.
In particular, if $z_0=1$ and $y^{(1)}\not\in  \partial_r\mathcal P$ or $z_0=2$ and
$y^{(2)}\not\in \partial_r\mathcal P$, then $y^{(3)}$ lies in $B_j$
if and only if $(z,(x,y^{(1)}),y^{(2)},y^{(3)})$ lies in $D_j$.

On the other hand,  since the projection of $\iota'$ on $\Omega$ is $\iota$ and
the projections of $\iota$ on $R \times Y^{(1)}$ and $R \times Y^{(2)}$ are $\zeta \times {\pi_2^*(\xi)}$
and $\zeta \times \lambda$, we have
\begin{align*}
\iota'\{\omega'\in \Omega'\colon y^{(1)}\in
\partial_r\mathcal P\text{ and }z_0=1\}
&= {\pi_2^*(\xi)}(\partial_r\mathcal P)    \cdot \zeta\{z\colon z_0=1\} \text{; and}\\
\iota'\{\omega'\in \Omega'\colon y^{(2)}\in
\partial_r\mathcal P\text{ and }z_0=2\}
&=  \lambda(
\partial_r\mathcal P)  \cdot \zeta\{z\colon z_0=2\}.
\end{align*}

Note that $\pi_2^*(\xi')$ is the projection of ${\iota'}$ on $Y^{(3)}$.    Let $\mathcal P'$ be
the partition of $\Omega'$ whose parts are given by
\[
\ns{\omega' \in \Omega': y^{(3)} \in B_i} \text{ for } 1 \leq i \leq n.
\]
Let $\mathcal Q'$ be the partition of $\Omega'$ whose parts are given by
\[
\ns{\omega' \in \Omega': (z,(x,y^{(1)}),y^{(2)}) \in D_i} \text{ for }  0 \leq i \leq n.
\]
It follows that the d-bar distance $\metricbar({\pi_2^*(\xi')}, \mathcal P' \,;\, {\iota'}, \mathcal Q')$
is bounded above by
\[
{\pi_2^*(\xi)}(\partial_r\mathcal P)    \cdot \zeta\{z\colon z_0=1\} +  \lambda(
    \partial_r \mathcal P)  \cdot \zeta\{z\colon z_0=2\} + \zeta\ns{z: z_0 =0}<2\gamma.
\]

By Lemma \ref{phi-en}, we have
\[
|h_{{\pi_2^*(\xi')}}(S,\mathcal P)-h_{\iota'}(\tau \times S,\mathcal Q)| \leq  \mathcal H(2\gamma) +
2\gamma\log(|\mathcal P| +1)
<\epsilon\delta.
\]
Thus by \eqref{cross-up} and the fact that $h_{\iota}(\tau,\mathcal Q) =
h_{ {\iota'}}(\tau \times S, \mathcal Q')$, we have
\begin{align*}
&h_{{\pi_2^*(\xi')}}(S)\\ &\geq  h_{{\pi_2^*(\xi')}}(S,\mathcal P)  \\
&\geq  (1-\epsilon-\gamma)h_{\pi_2^*(\xi)}(S,\mathcal P)+
(\epsilon-\gamma)h_{\lambda}(S,\mathcal P) - \epsilon\delta\\
&\ge (1-\epsilon)h_{\pi_2^*(\xi)}(S)-\epsilon\delta + \epsilon (h_{\pi_2^*(\xi)}(S)+4\delta)-
\epsilon\delta-2\gamma h_\mathrm{top}(S)-\epsilon\delta\\
&>h_{\pi_2^*(\xi)}(S).
\qedhere
\end{align*}
\end{proof}

The proof of Lemma \ref{full-support} uses similar ideas, but is made easier
by the fact that we can use Lemma \ref{up}.

\begin{proof}[Proof of Lemma \ref{full-support}]
Let $(V_n)_{n\in\N}$ be a countable collection of open sets forming a neighbourhood basis
for $Y$. Let $C_n:=\{\xi\in\M_0\colon \pi_2^*(\xi)(V_n)>0\}$. By standard properties of the weak$^*$-distance,
$C_n$ is a relatively open subset of $\M_0$ in the weak$^*$ topology. It suffices to show that
$C_n$ is dense in $\M_0$ for each $n$.

Fix $n\in \N$ and $\epsilon>0$. Let $\xi_0\in \M_0$. By Lemma \ref{up}, there exists $\xi_1\in\M_0$
such that $h_{\pi_2^*(\xi_1)}(S)>h_\mu(T)$ and $\metricstar(\xi_0,\xi_1)<\epsilon/2$.
Let  $\mathcal Q$ be a partition such that $\pi_2^*(\xi_1)(\partial Q) = 0$ and
$h_{\pi_2^*(\xi_1)}(S,\mathcal Q)>h_\mu(T)$.
Let $\delta<\epsilon/4$ be such that the following inequality holds:
\[  \mathcal H(2\delta) + 2\delta\log|\mathcal Q|<h_{\pi_2^*(\xi_1)}(S,\mathcal Q)-h_\mu(T),\]
where $\mathcal H$ is defined in \eqref{Phi}.

Let $r<\epsilon/4$ be chosen such that ${\pi_2^*(\xi_1)}(\partial_r\mathcal Q)<\delta$  and
$B(\mathbf{y},r)\subset V_n$
for some fixed $\mathbf{y} \in Y$.   We then choose $N$ such that $L_r(N)< M:= \lfloor\delta N/2 \rfloor$.
Let $\zeta$ be a mixing measure on $Z=\{0,1,2\}^\Z$ supported on concatenations of the blocks
$1^{N-2M}\,0^{M}\,2\,0^{M}$ and
$1^{N+1-2M}\,0^{M}\,2\,0^{M}$. As in the proof of Lemma \ref{up}, using Proposition \ref{interpolation},
we start with the (ergodic) product measure $\xi_1\times\zeta$ on $(X\times Y)\times Z$
and obtain an ergodic measure $\iota$ on $(X\times Y)\times Z\times Y$
whose marginal on the first pair of coordinates is $\xi_1$ and which satisfies
$\metric(y',y) \leq r$ for $\iota$-almost every $((x,y),z,y')$ such that $z_0=1$
and $\metric(y',\mathbf{y}) \leq r$ for $\iota$-almost every $((x,y),z,y')$ such that $z_0=2$.
Projecting $\iota$ onto the initial $X$ coordinate and the final $Y$ coordinate gives a measure $\xi'$
that can be checked using the same arguments as in Lemma \ref{up} to satisfy
$\metricstar(\xi_1,\xi')<\epsilon/2$ and
$\metricbar({\pi_2^*(\xi_1)},\mathcal Q \,;\,\pi_2^*(\xi'),\mathcal Q)<2\delta$.
This is sufficient to ensure that $h_{\pi_2^*(\xi')}(S,\mathcal Q)>h_\mu(T)$, so that $\xi'\in \M_0$.
Finally, $\pi_2^*(\xi')(B(\mathbf{y},r))>1/(N+1)$, so that $\xi'\in C_n$.
\end{proof}

\section{Marriage via Brin--Katok}
\label{section-marr}

Let $(X, \mu, T)$ be  a non-trivial ergodic subshift with invariant measure $\mu$.
Let $S$ be a self-homeomorphism on a
compact metric space $Y$.   Given a $\mu$-joining, $\xi$,  with $h_{\pi_2^*(\xi)}(S)>h_\mu(T)$,
we will define an injective map from blocks of $X$  to orbit segments of $Y$, with certain
properties that will be useful in constructing perturbations of $\xi$ that are approximate embeddings.

This section, while in spirit closely following the proof of the Krieger generator theorem
given by Burton, Keane and Serafin  \cite{BurKeaSer},
differs in the details because we embed into a general compact metric space, rather
than into a shift space.    Burton, Keane and Serafin  make use of the Shannon-Macmillan-Breiman
theorem and Hall's marriage theorem, which are also important ingredients in the proofs of
the Sinai factor theorem and Ornstein isomorphism theorem given by Keane and Smorodinsky
\cite{keanea, keaneb}.
We substitute  the Shannon-Macmillan-Breiman theorem with a  topological
analogue due to  Brin and Katok \cite{BriKat}.   In order to define {\em markers} in a general
setting that is not necessarily symbolic, we will also make use of a generalization due to
Downarowicz and Weiss \cite{DownWeiss}
of the Ornstein and Weiss \cite{MR1211492} return time formula for entropy.

Let $S$ be a self-homeomorphism on a compact metric space $(Y, \metric)$.
For integers $m < n$ let
\begin{equation*}
\metric_m^n(y,z) := \max_{m \leq j  <  n} \metric(S^jy, S^jz)  \quad \text{for all} \quad y,z\in Y.
\end{equation*}
We define the    {\df $\boldsymbol{(\metric_m ^n, \delta)}$-Bowen ball} about a point $y \in Y$  by
\begin{equation*}
B_m ^n(y,\delta) :=  \ns{z \in Y:  \metric_m^n(y,z) < \delta}.
\end{equation*}
For an $S$-invariant measure $\lambda$ on $Y$  we define for $(z, \eta) \in Y \times \R^{+}$
\[
\underline h^\text{BK}_\lambda(z,\eta):=
\liminf_{n\to\infty} -(1/n)\log \lambda(B_0^n(z, \eta))
\]
and
\[
\underline h^\text{DW}_\lambda(z,\eta):=\liminf_{n\to\infty}
(1/n)\log \min\{i>0\colon \metric_0^n(z,S^iz)<\eta\}.
\]
It can be easily checked that $\underline h^\text{BK}_\lambda(z,\eta)$ and
$\underline h^\text{DW}_\lambda(z,\eta)$ are $S$-invariant functions of $z$, so that if $\lambda$ is an
ergodic measure,
there are
monotone
functions
$\underline h^\text{BK}_\lambda(\eta)$
and
$\underline h_\lambda^\text{DW}(\eta)$
such that $\underline h^\text{BK}_\lambda(z,\eta)=\underline h^\text{BK}_\lambda(\eta)$ and
$\underline h^\text{DW}_\lambda(z,\eta)=\underline
h^\text{DW}_\lambda(\eta)$ for $\lambda$-almost every $z\in Y$.

\begin{theorem}[Brin and Katok \cite{BriKat}]\label{thm:BK}
    Let $S$ be a self-homeomorphism of a
    compact metric space $Y$ preserving an ergodic measure $\lambda$ and suppose that
    $h_{\lambda}(S)<\infty$.
    Then for $\lambda$-almost every $z\in Y$, we have
\[
    \lim_{\eta\to 0}\underline h^\mathrm{BK}_\lambda(z,\eta)=
    h_{\lambda}(S).
\]

\end{theorem}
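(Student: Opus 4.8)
The plan is to establish, for $\lambda$-almost every $z$, the two inequalities
$\limsup_{\eta\to0}\underline h^\mathrm{BK}_\lambda(z,\eta)\le h_{\lambda}(S)$ and
$\liminf_{\eta\to0}\underline h^\mathrm{BK}_\lambda(z,\eta)\ge h_{\lambda}(S)$, and then to invoke the $\lambda$-a.e.\ constancy and monotonicity of $\eta\mapsto\underline h^\mathrm{BK}_\lambda(\cdot,\eta)$ recorded just before the theorem (and $h_\lambda(S)<\infty$) to conclude that the limit exists and equals $h_\lambda(S)$. Both inequalities are routed through a finite measurable partition $\mathcal P$ of $Y$, the elementary observation that points lying in a common atom of $\bigvee_{j=0}^{n-1}S^{-j}\mathcal P$ shadow one another to within $\mathrm{diam}(\mathcal P)$ on $[0,n)$, and the Shannon--McMillan--Breiman theorem, which is available because $h_\lambda(S)<\infty$. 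Note also that for each fixed $z$ the quantity $\underline h^\mathrm{BK}_\lambda(z,\eta)$ is non-increasing in $\eta$, since $B_0^n(z,\eta)$ grows with $\eta$; so the limits above exist.

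For the upper bound, fix $\eta>0$ and choose any finite Borel partition $\mathcal P$ with $\mathrm{diam}(\mathcal P)<\eta$. Then the atom of $\bigvee_{j=0}^{n-1}S^{-j}\mathcal P$ containing $z$ is contained in $B_0^n(z,\eta)$, so $\lambda(B_0^n(z,\eta))\ge\lambda\big((\bigvee_{j=0}^{n-1}S^{-j}\mathcal P)(z)\big)$; by Shannon--McMillan--Breiman the right-hand side has $-(1/n)\log$ converging for $\lambda$-a.e.\ $z$ to $h_\lambda(S,\mathcal P)\le h_\lambda(S)$, so $\underline h^\mathrm{BK}_\lambda(z,\eta)\le h_\lambda(S)$ for $\lambda$-a.e.\ $z$. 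Since $\eta$ was arbitrary (take $\eta$ rational and intersect), this gives the upper bound.

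For the lower bound, fix $\epsilon>0$ and, by the standard argument already invoked in the proof of Lemma~\ref{up}, choose a finite partition $\mathcal P$ with $\lambda(\partial\mathcal P)=0$ and $h_\lambda(S,\mathcal P)>h_\lambda(S)-\epsilon$. Let $N_\eta$ be the (measurable) set of points lying within distance $\eta$ of some element of $\mathcal P$ that does not contain them; since $\lambda(\partial\mathcal P)=0$ one checks that $\lambda(N_\eta)\downarrow 0$ as $\eta\downarrow 0$, so we may fix $\eta>0$ with $\lambda(N_\eta)<\beta$, where $\beta<1/4$ is chosen so small that $\mathcal H(\beta)+\beta\log|\mathcal P|<\epsilon$. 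For $\lambda$-a.e.\ $z$, the Birkhoff theorem applied to $\mathbf 1_{N_\eta}$ gives that for all large $n$ at most $\beta n$ of the indices $j\in[0,n)$ satisfy $S^jz\in N_\eta$; and if $z'\in B_0^n(z,\eta)$ then at every other index $j$ the points $S^jz$ and $S^jz'$ lie in a common element of $\mathcal P$. Hence the $\mathcal P$-names of $z$ and $z'$ over $[0,n)$ agree off at most $\beta n$ coordinates, so $B_0^n(z,\eta)$ is covered by at most $\binom{n}{\le\beta n}\,|\mathcal P|^{\beta n}\le e^{n\epsilon}$ atoms of $\bigvee_{j=0}^{n-1}S^{-j}\mathcal P$. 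Combining this with the Shannon--McMillan--Breiman bound that a typical atom has measure at most $e^{-n(h_\lambda(S,\mathcal P)-\epsilon)}$ yields $\lambda(B_0^n(z,\eta))\le e^{-n(h_\lambda(S,\mathcal P)-2\epsilon)}$ for large $n$, so $\underline h^\mathrm{BK}_\lambda(z,\eta)\ge h_\lambda(S)-3\epsilon$; running $\epsilon$ through a sequence tending to $0$ and intersecting the corresponding full-measure sets then gives the lower bound.

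The main obstacle is the final combining step of the lower bound: a Bowen ball is covered by subexponentially many atoms, but some of those atoms may be atypically large, since the Shannon--McMillan--Breiman convergence can be arbitrarily slow, so one cannot simply bound every covering atom by $e^{-n(h_\lambda(S,\mathcal P)-\epsilon)}$. I would handle this by splitting the covering atoms into ``good'' ones, on which the entropy estimate holds — these contribute at most $e^{n\epsilon}\cdot e^{-n(h_\lambda(S,\mathcal P)-\epsilon)}$ — and ``bad'' ones, whose union $L_n$ satisfies $\lambda(L_n)\to 0$; the remaining contribution $\lambda(B_0^n(z,\eta)\cap L_n)$ is then controlled for $\lambda$-a.e.\ centre $z$ by exploiting the symmetry of the Bowen metric, namely $\int_Y\lambda(B_0^n(z,\eta)\cap L_n)\,d\lambda(z)=\int_{L_n}\lambda(B_0^n(z',\eta))\,d\lambda(z')\le\lambda(L_n)$, together with a Borel--Cantelli argument (if necessary after a diagonal refinement over $\epsilon$ and $n$), so that the bad part does not affect the required lower bound on the limit inferior. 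Everything else is routine.
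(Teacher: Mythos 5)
Your upper bound is fine: a finite Borel partition of diameter less than $\eta$ gives an atom of the $n$-fold join sitting inside $B_0^n(z,\eta)$, and Shannon--McMillan--Breiman does the rest. The framework for the lower bound --- a zero-boundary partition, Birkhoff control of the visits to the thickened boundary set $N_\eta$, and the observation that $B_0^n(z,\eta)$ meets at most $\binom{n}{\le\beta n}|\mathcal P|^{\beta n}$ atoms of $\bigvee_{j=0}^{n-1}S^{-j}\mathcal P$ --- is also the right one, and you have correctly identified the genuine obstruction: SMB has no rate, so some of the covering atoms can be anomalously large.

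The fix you propose does not close this gap. Writing $L_n$ for the union of bad atoms at scale $n$, you only know $\lambda(L_n)\to 0$, with no rate whatsoever. From $\int_Y\lambda(B_0^n(z,\eta)\cap L_n)\,d\lambda(z)\le\lambda(L_n)$ and Markov you get $\lambda\{z:\lambda(B_0^n(z,\eta)\cap L_n)>c_n\}\le\lambda(L_n)/c_n$; to run Borel--Cantelli with the needed threshold $c_n\approx e^{-n(h_\lambda(S,\mathcal P)-3\epsilon)}$ you would have to know $\sum_n\lambda(L_n)\,e^{n(h_\lambda(S,\mathcal P)-3\epsilon)}<\infty$, i.e.\ exponential decay of $\lambda(L_n)$, which SMB simply does not provide (consider $\lambda(L_n)\asymp 1/\log n$). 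A diagonal refinement does not rescue this: the liminf must be controlled at every large $n$, and there may be no subsequence along which $\lambda(L_n)e^{n(h_\lambda(S,\mathcal P)-3\epsilon)}$ is even bounded. The missing device is the maximal separated set used by Brin and Katok. Fix a finite $N$ and a good set $V_N$ on which both the SMB upper estimate $\lambda(\mathrm{atom}(z,n))\le e^{-n(h_\lambda(S,\mathcal P)-\epsilon)}$ and the Birkhoff frequency bound hold for every $n\ge N$; set $E_n:=\{z\in V_N:\lambda(B_0^n(z,\eta))>e^{-n(h_\lambda(S,\mathcal P)-3\epsilon)}\}$, and let $F_n\subset E_n$ be a maximal $(\metric_0^n,2\eta)$-separated subset. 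The balls $B_0^n(z,\eta)$, $z\in F_n$, are pairwise disjoint and each has measure exceeding $e^{-n(h_\lambda(S,\mathcal P)-3\epsilon)}$, forcing $|F_n|<e^{n(h_\lambda(S,\mathcal P)-3\epsilon)}$; maximality gives $E_n\subset\bigcup_{z\in F_n}B_0^n(z,2\eta)$; each such $2\eta$-ball is covered by at most $e^{n\epsilon}$ atoms, and every atom meeting $E_n\subset V_N$ has measure at most $e^{-n(h_\lambda(S,\mathcal P)-\epsilon)}$. Multiplying, $\lambda(E_n)\le e^{-n\epsilon}$, which is summable, and Borel--Cantelli finishes the lower bound. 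It is the counting bound on $|F_n|$ that substitutes for the exponential decay your $L_n$ cannot supply. (For what it is worth, the paper does not prove this theorem; it simply cites Brin and Katok.)
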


\begin{theorem}[Downarowicz and Weiss \cite{DownWeiss}]
\label{Downarowicz}
\sloppy
    Suppose that  $S$ is a self-homeomorphism of a
    compact metric space $Y$ preserving an ergodic measure $\lambda$ and suppose that
    $h_{\lambda}(S)<\infty$.
Then for $\lambda$-a.e.~$z\in Y$, we have
\[
\lim_{\eta \to0} \underline h^\mathrm{DW}(z,\eta)
=h_\lambda(S).
\]
\end{theorem}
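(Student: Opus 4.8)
The plan is to deduce Theorem~\ref{Downarowicz} from the Ornstein--Weiss return-time theorem \cite{MR1211492} and from Brin--Katok (Theorem~\ref{thm:BK}). Write $h=h_\lambda(S)$. First note that $R_n^\eta(z):=\min\{i>0\colon \metric_0^n(z,S^iz)<\eta\}$ is exactly the first return time of $z$ to its own $(\metric_0^n,\eta)$-Bowen ball $B_0^n(z,\eta)$, that $R_n^\eta(z)$ is non-decreasing in $n$ and non-increasing in $\eta$, and that $\underline h^\text{DW}_\lambda(\cdot,\eta)$ is $S$-invariant (as observed in the excerpt), hence $\lambda$-a.e.\ equal to a constant $\underline h^\text{DW}_\lambda(\eta)$ which is non-increasing in $\eta$. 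Thus it suffices to prove (a) $\underline h^\text{DW}_\lambda(\eta)\le h$ for every $\eta>0$, and (b) for every $\e>0$ there is $\eta>0$ with $\underline h^\text{DW}_\lambda(\eta)>h-\e$. Throughout, $S$-invariance lets us upgrade ``$\underline h^\text{DW}_\lambda(z,\eta)\le c$ on a set of positive $\lambda$-measure'' to ``$\underline h^\text{DW}_\lambda(\eta)\le c$'', and similarly with $\ge$.

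For (a) I would use Kac's lemma. Fix $\eta>0$ and a small $\delta\in(0,1)$. For each $n$ let $F_n$ be a maximal $(\metric_0^n,\eta/2)$-separated subset of a set $G_n$ with $\lambda(G_n)>1-\delta$; then $F_n$ is $(\metric_0^n,\eta/2)$-spanning for $G_n$. For $z\in G_n$, choosing $c\in F_n$ with $\metric_0^n(z,c)<\eta/2$ gives $z\in B_0^n(c,\eta/2)\subseteq B_0^n(z,\eta)$, so $R_n^\eta(z)$ is at most the first return time of $z$ to $B_0^n(c,\eta/2)$; since $\lambda$ is ergodic, Kac's lemma gives $\int_{B_0^n(c,\eta/2)}R_{B_0^n(c,\eta/2)}\,d\lambda\le 1$, whence $\lambda\{z\colon R_n^\eta(z)>t\}\le\delta+|F_n|/t$ for every $t>0$. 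By Theorem~\ref{thm:BK} applied at scale $\eta/4$, for $\lambda$-a.e.\ $z$ one has $\lambda(B_0^n(z,\eta/4))\ge e^{-n(h+\e)}$ for infinitely many $n$; a reverse-Fatou argument then produces infinitely many $n$ for which one may take $G_n$ so that also $|F_n|\le e^{n(h+\e)}$ (the $(\metric_0^n,\eta/4)$-balls about $F_n$ being disjoint). Taking $t=|F_n|n^2$ along these $n$ and intersecting over tails shows $\underline h^\text{DW}_\lambda(z,\eta)\le h+2\e$ on a set of measure $\ge 1-2\delta$, hence $\underline h^\text{DW}_\lambda(\eta)\le h+2\e$; let $\e\to 0$.

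For (b) --- the heart of the matter --- I would reduce to the symbolic Ornstein--Weiss theorem. Fix $\e>0$. First choose a finite Borel partition $\mathcal P$ of $Y$ with $\lambda(\partial\mathcal P)=0$ and $h_\lambda(S,\mathcal P)>h-\e/3$: such a $\mathcal P$ exists because $Y$ carries a sequence $(\mathcal R_k)$ of finite partitions with $\lambda(\partial\mathcal R_k)=0$ and $\mathrm{diam}(\mathcal R_k)\to 0$ (take atoms bounded by spheres of $\lambda$-measure zero), this sequence is generating, so $h_\lambda\big(S,\bigvee_{j\le k}\mathcal R_j\big)\uparrow h$ by Kolmogorov--Sinai. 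Next choose $\alpha>0$ with $\mathcal H(\alpha)+\alpha\log|\mathcal P|<\e/3$, and then $\eta>0$ small enough that $\lambda(B(\partial\mathcal P,\eta))<\alpha/2$. If $\metric_0^n(z,S^iz)<\eta$ then $\metric(S^jz,S^{i+j}z)<\eta$ for $0\le j<n$, so $S^jz$ and $S^{i+j}z$ lie in the same atom of $\mathcal P$ unless $S^jz\in B(\partial\mathcal P,\eta)$; by the Birkhoff ergodic theorem for the indicator of $B(\partial\mathcal P,\eta)$ along the forward orbit of $z$ (crucially, no union over $i$ is needed here), for $\lambda$-a.e.\ $z$ and all large $n$ there are fewer than $\alpha n$ such $j$ in $[0,n)$. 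Hence, for all large $n$, a $(\metric_0^n,\eta)$-return of $z$ forces the $\mathcal P$-name of $z$ on $[0,n)$ and on $[i,i+n)$ to disagree in fewer than $\alpha n$ places, so $R_n^\eta(z)\ge R_n^{\mathcal P,\alpha}(z)$, the first time the $\mathcal P$-name of $z$ returns to within Hamming distance $\alpha n$ of itself. A version of the Ornstein--Weiss return-time theorem for such ``fattened'' returns --- obtained by covering the Hamming ball of radius $\alpha n$ about a name by at most $e^{n(\mathcal H(\alpha)+\alpha\log|\mathcal P|)}$ ordinary cylinders (the defect estimate behind Lemma~\ref{phi-en}) and combining the return-time theorem \cite{MR1211492} with a hitting-time estimate for each cylinder --- then gives $\underline h^\text{DW}_\lambda(z,\eta)\ge\liminf_n\frac1n\log R_n^{\mathcal P,\alpha}(z)\ge h_\lambda(S,\mathcal P)-\mathcal H(\alpha)-\alpha\log|\mathcal P|>h-\e$ for $\lambda$-a.e.\ $z$.

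The main obstacle is exactly this last refinement of Ornstein--Weiss. Naive first-moment bounds on $\lambda\{z\colon R_n^\eta(z)<K\}$ only produce factors of the form $K\cdot(\text{subexponential})$ and blow up, and one cannot bound $\lambda\{z\colon \metric_0^n(z,S^iz)<\eta\}$ uniformly in $i$ by a Bowen-ball covering without a uniform metric-doubling property of $(Y,\metric_0^n)$, which need not hold on a general compact metric space --- this is why passing to a fixed finite partition, whose cylinders have trivially controlled overlap structure, is the right route. The residual care is in handling atypical cylinders and the summability needed for a Borel--Cantelli argument, which is eased by the monotonicity of $R_n^\eta(z)$ in $n$ (allowing one to argue along a subsequence and interpolate) together with the $\lambda$-a.e.\ constancy of $\underline h^\text{DW}_\lambda(\cdot,\eta)$.
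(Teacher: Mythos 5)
The paper does not prove Theorem~\ref{Downarowicz}; it is cited directly from Downarowicz and Weiss \cite{DownWeiss}, so there is no ``paper's own proof'' to compare against. What you have written is therefore an independent proof attempt, and it should be judged on its own terms.

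Your part (a) (the upper bound $\underline h^{\mathrm{DW}}_\lambda(\eta)\le h$) is essentially sound: the inclusion $B_0^n(c,\eta/2)\subseteq B_0^n(z,\eta)$ for $c$ an $\eta/2$-spanning point, Kac plus Markov to bound $\lambda\{R^\eta_n>t\}$ by $\delta+|F_n|/t$, disjointness of the $B_0^n(c,\eta/4)$'s to bound $|F_n|$ via Brin--Katok, and then observing that $\underline h^{\mathrm{DW}}$ is a $\liminf$ so a bound along the reverse-Fatou subsequence suffices and no interpolation is actually needed. This is a correct, self-contained argument.

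Part (b) contains a genuine gap, and it is exactly where you flag the ``main obstacle.'' The reduction from Bowen-ball returns to Hamming-fattened symbolic returns (via a zero-boundary partition and Birkhoff control of the orbit's visits to $B(\partial\mathcal P,\eta)$, with no union over $i$) is correct and is the right idea. But the step you need --- that for an ergodic process of entropy $h'$ on $k$ symbols one has $\liminf_n\frac1n\log \tilde R_n^\alpha\ge h'-\mathcal H(\alpha)-\alpha\log k$ a.e., where $\tilde R_n^\alpha$ is the first $\alpha n$-Hamming return --- is not established by the mechanism you sketch. Covering the Hamming ball of $w=w(z)|_{[0,n)}$ by $e^{n(\mathcal H(\alpha)+\alpha\log k)}$ cylinders and applying ``a hitting-time estimate for each cylinder'' runs into two problems. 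First, even when $w$ is SMB-typical, most of the $e^{n(\mathcal H(\alpha)+\alpha\log k)}$ words $w'$ in its Hamming ball need not be typical, so there is no useful pointwise bound $\lambda([w'])\lesssim e^{-nh'}$ to feed into a union bound; the measure $\lambda\big(\bigcup_{w'\in B_H(w,\alpha n)}[w']\big)$ of the Hamming blowup is not controlled from above by a cylinder count alone. Second, even granting such a bound, the resulting inequality $\lambda\{\tilde R_n^\alpha<T\}\lesssim T\cdot e^{n(\mathcal H(\alpha)+\alpha\log k)}\cdot e^{-n(h'-\e)}$ gives, at best, an ``in probability'' statement for each $n$; turning that into an a.e.\ $\liminf$ lower bound requires summability over $n$ that you do not have (and monotonicity of $\tilde R_n^\alpha$ in $n$ helps the $\liminf$ in part (a) precisely because you want an \emph{upper} bound there; it does not rescue a \emph{lower} bound). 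The fattened return-time lower bound is true and appears in the literature, but its known proofs go through a counting/coding argument in the spirit of the original Ornstein--Weiss proof, not a naive union bound. As written, your proposal asserts the needed lemma rather than proving it, so part (b) is incomplete.
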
	
\fussy

\begin{lemma}[Marker lemma]
\label{marker-lemma}
Let $S$ be a self-homeomorphism of a compact metric space $Y$
preserving an ergodic measure $\nu$ with $0<h_\nu(S)<\infty$.  Let
$\alpha \in (0,1)$ and let $\eta$ be such that $\underline
h^\mathrm{BK}_{\nu}(4\eta)>0$ and $\underline h^\mathrm{DW}_\nu(4\eta)>0$.
Then for all sufficiently large integers $M$, there is a point
$y_\mathrm{mark}\in Y$ with the following properties:
\begin{enumerate}
\item
  $\metric_0^{2M}(S^iy_\mathrm{mark},S^{6M}y_\mathrm{mark}) \ge 4\eta$ for
  $i\in\{0,\ldots,6M-1\}$;\label{slowret1}
\item
  $\nu(H_1)<\alpha/M$, where $H_1:=B_0^{2M}(y_\mathrm{mark},4\eta)$;
  \label{smallball1}
\item
  $\nu(H_2)<\alpha/M$, where $H_2:=B_0^{2M}(S^{6M}y_\mathrm{mark},4\eta)$.
  \label{smallball2}
\end{enumerate}
\end{lemma}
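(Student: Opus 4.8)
The plan is to produce $y_\mathrm{mark}$ as a suitably $\nu$‑generic point, deriving \eqref{smallball1} and \eqref{smallball2} from the Brin--Katok theorem (Theorem~\ref{thm:BK}) and \eqref{slowret1} from the Downarowicz--Weiss theorem (Theorem~\ref{Downarowicz}) applied to the \emph{inverse} system $(Y,S^{-1},\nu)$.

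The key point is that \eqref{slowret1}, which superficially consists of $6M$ separate separation requirements, amounts to a single lower bound on a return time of the reverse dynamics. Fix $i\in\{0,\dots,6M-1\}$, put $p:=6M-i\in\{1,\dots,6M\}$ and $z:=S^{8M-1}y_\mathrm{mark}$. Substituting $l=2M-1-k$ in $\metric_0^{2M}(S^iy_\mathrm{mark},S^{6M}y_\mathrm{mark})=\max_{0\le k<2M}\metric(S^{i+k}y_\mathrm{mark},S^{6M+k}y_\mathrm{mark})$ yields
\[
\metric_0^{2M}(S^iy_\mathrm{mark},S^{6M}y_\mathrm{mark})=\max_{0\le l<2M}\metric\!\left(S^{-l}z,\,S^{-l-p}z\right),
\]
and the right‑hand side is exactly the length‑$2M$ Bowen distance between $z$ and $S^{-p}z$ for the homeomorphism $S^{-1}$. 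Hence \eqref{slowret1} holds for every $i\in\{0,\dots,6M-1\}$ precisely when the first $(2M,4\eta)$‑return time of $z$ in $(Y,S^{-1},\nu)$ exceeds $6M$.

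Now let $c>0$ be smaller than $\underline h^{\text{BK}}_\nu(4\eta)$, than $\underline h^{\text{DW}}_\nu(4\eta)$, and than the Downarowicz--Weiss exponent of $(Y,S^{-1},\nu)$ at scale $4\eta$ (the last is positive for the same reason as the other two: it tends to $h_\nu(S)>0$ as the scale shrinks, so $\eta$ may be taken small enough). By Theorems~\ref{thm:BK} and~\ref{Downarowicz}, the latter applied to $S^{-1}$, together with ergodicity of $\nu$, there is a full‑measure set on which each $w$ has some $N(w)<\infty$ with, for all $n\ge N(w)$,
\[
\nu\!\left(B_0^n(w,4\eta)\right)\le e^{-cn}\qquad\text{and}\qquad \min\Big\{p>0:\max_{0\le l<n}\metric\!\left(S^{-l}w,S^{-l-p}w\right)<4\eta\Big\}\ge e^{cn}.
\]
Take $M$ large enough that $e^{-2cM}<\alpha/M$ and $e^{2cM}>6M$, and let $G_M$ be the set of $w$ with $\nu(B_0^{2M}(w,4\eta))<\alpha/M$ and $G_M^{-}$ the set of $w$ whose first $(2M,4\eta)$‑return time for $S^{-1}$ exceeds $6M$. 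Every $w$ in the full‑measure set above with $N(w)\le 2M$ lies in both, so $\nu(G_M),\nu(G_M^{-})\to1$ as $M\to\infty$. Using invariance of $\nu$, $\nu\!\left(G_M\cap S^{-6M}G_M\cap S^{-(8M-1)}G_M^{-}\right)\ge 1-2\nu(G_M^{c})-\nu\big((G_M^{-})^{c}\big)$, which is positive for $M$ large; choose any $y_\mathrm{mark}$ in this intersection. Then $y_\mathrm{mark}\in G_M$ gives \eqref{smallball1} (as $H_1=B_0^{2M}(y_\mathrm{mark},4\eta)$), $S^{6M}y_\mathrm{mark}\in G_M$ gives \eqref{smallball2} (as $H_2=B_0^{2M}(S^{6M}y_\mathrm{mark},4\eta)$), and $S^{8M-1}y_\mathrm{mark}\in G_M^{-}$ gives \eqref{slowret1} by the equivalence of the second paragraph.

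The one genuine obstacle is that second‑paragraph reformulation. Without it one is forced to impose the Downarowicz--Weiss separation at each of the $6M$ shifts $S^iy_\mathrm{mark}$, which would require the relevant ``good'' set to have measure exceeding $1-\tfrac1{6M}$ at scale $2M$; Theorems~\ref{thm:BK} and~\ref{Downarowicz} give only that such sets have measure tending to $1$, with no rate, so that route fails. Recasting \eqref{slowret1} as a single return‑time bound for the reverse orbit sidesteps this entirely. The rest is bookkeeping: checking the index substitution and reconciling the several radii $4\eta$ across the applications of the two theorems.
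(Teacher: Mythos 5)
Your argument is correct and follows the same route the paper takes: properties (2) and (3) via Brin--Katok, and property (1) via the Downarowicz--Weiss return-time theorem applied to $S^{-1}$, after the reindexing $z=S^{8M-1}y_\mathrm{mark}$, $l=2M-1-k$, $p=6M-i$ that turns the $6M$ separation requirements into a single $(2M,4\eta)$-return-time bound for the reverse dynamics. You also rightly flag that what is actually needed is positivity of the Downarowicz--Weiss exponent of $(Y,S^{-1},\nu)$ at scale $4\eta$, which the stated hypothesis $\underline h^\mathrm{DW}_\nu(4\eta)>0$ does not literally supply --- the paper's one-line proof elides this, but it is harmless since $\eta$ is chosen small in Lemma~\ref{choose-N} and both exponents tend to $h_\nu(S)>0$.
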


\begin{proof}
  Property \eqref{slowret1}
  comes from applying Theorem \ref{Downarowicz} to $S^{-1}$.
  Properties \eqref{smallball1} and \eqref{smallball2} follow from
  Theorem \ref{thm:BK}.
\end{proof}

\begin{corollary}[Marker decipherability]\label{decipher}
  Let $Y$, $S$, $M$, $\eta$, $H_1$ and $H_2$ be as in the statement of
  Lemma \ref{marker-lemma}.   Suppose that $S$ satisfies almost weak specification.   Let $0 < r< \eta$.  Let $L>0$, and assume that the   gap function satisfies $L_r(L)<M<L-18M$.  Let $z \in Y$ be a point such that
  \begin{enumerate}[(i)]
  \item
    \label{setHa}
    $S^iz \not\in H_1 \text{ for } 0\le i<L;$
  \item
    \label{setHb}
    $S^iz \not\in H_2 \text{ for } 0\le i<L.$
  \end{enumerate}
    Let $\tilde z
  \in Y$ be a point satisfying the specification conditions:
\begin{enumerate}[(a)]
\item
\label{spec-z}
$\metric_M^{L-10M}(\tilde z,z)<r$
\item
\label{spec-m}
$\metric_0^{8M}(S^{L-9M} \tilde z, y_{\mathrm{mark}}) < r$;
\end{enumerate}
such a point exists by almost weak specification.
Then $\tilde z$ satisfies
\[
\metric_0^{8M}(S^i\tilde z,y_\mathrm{mark})\ge 4\eta-r
\text{ for }0\leq i <L-9M.
\]
In particular, if $0\le j\le L-9M$, $w:=  S^j \tilde z$, and $w' \in Y$ with   $\metric(S^iw',S^iw) <\eta$ for all $i \in \Z$, then $j$ may be recovered from $w'$    via 
\[
j=(L-9M)-\min\{i\ge 0\colon
\metric_0^{8M}(S^i w',y_\mathrm{mark})<3\eta -r\}.
\]
\end{corollary}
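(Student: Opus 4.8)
# Proof Proposal for Corollary \ref{decipher}

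The plan is to chain together the three assertions, where each feeds into the next. First I would establish the displayed lower bound $\metric_0^{8M}(S^i\tilde z,y_\mathrm{mark})\ge 4\eta-r$ for $0\le i<L-9M$. The idea is to split into two ranges for $i$. For $i$ in the bulk, say where the window $[i,i+8M)$ sits inside $[M,L-10M)$, condition \eqref{spec-z} gives $\metric_M^{L-10M}(\tilde z,z)<r$, so $S^i\tilde z$ tracks $S^iz$ within $r$ over that window; hence if $S^i\tilde z$ were within $4\eta-r$ of $y_\mathrm{mark}$, then $S^iz$ would be within $4\eta$ of $y_\mathrm{mark}$, i.e.\ $S^iz\in H_1=B_0^{2M}(y_\mathrm{mark},4\eta)$ (the $2M$-window being a sub-window of the $8M$-window), contradicting hypothesis \eqref{setHa}. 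For $i$ near the right end, where the window overlaps the region governed by \eqref{spec-m}, I would instead use property \eqref{slowret1} of the marker: $\metric_0^{2M}(S^jy_\mathrm{mark},S^{6M}y_\mathrm{mark})\ge 4\eta$ for $0\le j<6M$, together with the fact that $S^{L-9M}\tilde z$ shadows $y_\mathrm{mark}$ over $8M$ steps via \eqref{spec-m}, so $S^i\tilde z$ for $i$ slightly less than $L-9M$ is close to a \emph{shifted} copy $S^jy_\mathrm{mark}$ with $j>0$, which by \eqref{slowret1} stays $\ge 4\eta$ away from $y_\mathrm{mark}$ itself (up to the error $r$). The constraint $L_r(L)<M<L-18M$ is what guarantees the specification conditions \eqref{spec-z}, \eqref{spec-m} are simultaneously satisfiable and that the two ranges of $i$ together cover all of $[0,L-9M)$ with the windows fitting where needed.

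Next, given $w=S^j\tilde z$ with $0\le j\le L-9M$ and $w'$ with $\metric(S^iw',S^iw)<\eta$ for all $i$, I want to show the recovery formula. Set $m:=\min\{i\ge 0\colon \metric_0^{8M}(S^iw',y_\mathrm{mark})<3\eta-r\}$ and show $j=(L-9M)-m$, equivalently $m=(L-9M)-j$. For the inequality $m\le (L-9M)-j$: at $i=(L-9M)-j$ we have $S^iw'=S^{L-9M-j}w'$, which is within $\eta$ (over any window) of $S^{L-9M-j}w=S^{L-9M}\tilde z$, and the latter is within $r$ of $y_\mathrm{mark}$ over $8M$ steps by \eqref{spec-m}; hence $\metric_0^{8M}(S^iw',y_\mathrm{mark})<\eta+r<3\eta-r$ (using $r<\eta$), so this $i$ qualifies. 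For $m\ge(L-9M)-j$: for any $i<(L-9M)-j$, the point $S^{i+j}\tilde z$ has $i+j<L-9M$, so by the first assertion $\metric_0^{8M}(S^{i+j}\tilde z,y_\mathrm{mark})\ge 4\eta-r$; since $S^iw'$ is within $\eta$ of $S^iw=S^{i+j}\tilde z$ over every window, $\metric_0^{8M}(S^iw',y_\mathrm{mark})\ge 4\eta-r-\eta=3\eta-r$, so no such $i$ qualifies. Combining, $m=(L-9M)-j$ exactly, which rearranges to the stated formula.

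I expect the main obstacle to be the bookkeeping in the first assertion: carefully tracking which window index $i$ is governed by which specification condition, verifying that the $2M$-Bowen balls $H_1,H_2$ sit inside the relevant $8M$-windows, and handling the transitional range of $i$ near $L-9M$ where one must invoke the slow-return property \eqref{slowret1} of the marker rather than hypotheses \eqref{setHa}--\eqref{setHb}. Once that lower bound is in hand cleanly, the recovery formula is a short two-sided estimate using only $r<\eta$ and the triangle inequality, as sketched above. I would also need to double-check at the outset that the numerology $L_r(L)<M<L-18M$ permits choosing $\tilde z$ satisfying \eqref{spec-z} and \eqref{spec-m} simultaneously — this is where almost weak specification with gap function $L_r$ is applied to the two orbit segments (the segment of $z$ over $[M,L-10M)$ and the marker segment over $[L-9M,L-M)$), whose separating gap has length roughly $M>L_r(\text{segment length})$.
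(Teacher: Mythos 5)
Your second half — the two-sided estimate that pins down $j$ from $w'$ using $r<\eta$ — is correct and is essentially the argument the paper leaves implicit after the displayed lower bound, so that part is fine.

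The gap is in your first half. Your ``bulk'' case demands that a $2M$-subwindow of $[i,i+8M)$ anchored at $i$ lie inside $[M,L-10M)$, so that \eqref{spec-z} controls $\tilde z$ there and you may push the estimate onto $z$ and invoke $S^iz\notin H_1$. That forces $i\ge M$. Your transitional case picks up indices near $L-9M$. Neither case says anything about $0\le i<M$, where the leading $2M$-subwindow dips below $M$ and \eqref{spec-z} is silent; your claim that ``the two ranges of $i$ together cover all of $[0,L-9M)$'' is simply false. This is not bookkeeping slack: for $i<M$ one really cannot compare $S^i\tilde z$ to $S^iz$ via \eqref{spec-z}. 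The paper's fix (and the reason $H_2$ exists at all) is to use a \emph{different} $2M$-subwindow of the $8M$-window, namely $[i+6M,i+8M)$, which stays $\ge 6M\ge M$ for all $i\ge 0$ and lies in $[M,L-10M)$ whenever $i<L-18M$; then compare to $S^{6M}y_\mathrm{mark}$ and invoke $S^{i+6M}z\notin H_2$ via \eqref{setHb}. You mention $H_1,H_2$ in passing but your written argument only ever uses $H_1$, and has no step that would reach $i\in[0,M)$.

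So the paper's proof is organized into three overlapping cases rather than your two: $0\le i<L-18M$ using the shifted window and $H_2$; $M\le i<L-11M$ using the unshifted window and $H_1$; and $L-11M\le i<L-9M$ using the shifted window, the slow-return property \eqref{slowret1}, and \eqref{spec-m}. Your transitional case is on the right track but you would still need the shift-by-$6M$ device there so that the $2M$-subwindow you use lies wholly inside $[L-9M,L-M)$, exactly as in the bulk fix. Patching your sketch thus amounts to adding the missing $H_2$ case and propagating the shifted window through the end case, which recovers the paper's three-case structure.
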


\begin{proof}
We will check three cases.
	
If $0\le i < L-18M$, then we have
\begin{align*}
  \metric_0^{8M}(S^i\tilde z,y_\mathrm{mark})
  &\ge
  \metric_0^{2M}(S^{i+6M} \tilde z,S^{6M}y_\mathrm{mark}) \\
  & \geq \metric_0^{2M}(S^{i+6M} z,S^{6M}y_\mathrm{mark})- r
 \text{ (by  \eqref{spec-z})} \\
  & \ge 4\eta-r \text{ (by  \eqref{setHb})}.
\end{align*}

If $M\le i< L-11M$, then we have
\begin{align*}
  \metric_0^{8M}(S^i\tilde z,y_\mathrm{mark}) &\ge
  \metric_0^{2M}(S^i\tilde z,y_\mathrm{mark})  \\
  &\ge \metric_0^{2M}(S^i z,y_\mathrm{mark})  - r \ (\text{by } \eqref{spec-z})
  \\
  & \ge 4\eta - r \ (\text{by } \eqref{setHa}).
\end{align*}

	If
	 $L-11M\le i<L-9M$, then we have
\begin{align*}
&\metric_0^{8M}(S^i\tilde z,y_\mathrm{mark})  \ge
\metric_0^{2M}(S^{i+6M} \tilde z,S^{6M}y_\mathrm{mark}) \ge \\
&
\metric_0^{2M}(S^{i+15M-L}y_\mathrm{mark},S^{6M}y_\mathrm{mark})
- \metric_0^{2M}(S^{i+15M-L}y_\mathrm{mark},S^{i+6M}\tilde z)\\
&\ge 4\eta-r \text{ (by  Lemma \ref{marker-lemma}
	 \eqref{slowret1}) and \eqref{spec-m}}.
\qedhere
\end{align*}
\end{proof}

The following lemma involves the choice of many constants (which are necessary for our
proof of Proposition \ref{enl-open} \eqref{dense}),  but it follows easily from Theorem
\ref{thm:BK} and the ergodic theorem.
We will refer to Bowen balls in the product space $X\times Y$, where $X$ is a non-trivial subshift equipped  with the usual metric $\metric_X$, and $(Y, \metric_Y)$ is a compact metric space endowed with a self-homeomorphism.
For the definition of these, we will take the metric to be
\begin{equation}
\label{productmetric}
\metric_{X\times Y}((x,y),(x',y')):=
\max(\metric_X(x,x'),\metric_Y(y,y')).
\end{equation}

\begin{lemma}
\label{choose-N}
Let $(X, \mu, T)$ be a  non-trivial ergodic subshift with invariant measure
$\mu$ and natural generating partition $\mathcal P$.    Let $S$ be a self-homeomorphism
with almost weak specification on a compact metric space $Y$  that satisfies the small
boundary condition witnessed by a sequence of refining partitions
$(\mathcal Q_{\ell})$, where
$\mathrm{diam}(\mathcal Q_{\ell}) < 1/\ell$.
Let $\xi$ be a $\mu$-joining with
$h_{{\pi_2^*(\xi)}}(S) > h_{\mu}(T)$.    Let $\e >0$.

Set
 \begin{equation}
 \label{def-cap-delta}
  \Delta :=  (h_{{\pi_2^*(\xi)}}(S) - h_\mu(T)  )/10.
  \end{equation}
Let $0<\delta <\epsilon/80$ satisfy  the inequalities:
\begin{align}
\label{en-cap-delta}
\delta\big(1+h_{\xi}(T \times S)\big)&<\Delta;\\
\label{part-cap-delta}
16\delta(1+\log|\mathcal P|)&<\epsilon\Delta;\\
%
%
\label{stupid}
4(1-\delta)(1 -15\delta) & \geq 3.
\end{align}
By Theorems \ref{thm:BK} and \ref{Downarowicz}
, let $\eta<\fracc{\epsilon}{12}$ be chosen so that
\begin{align}
\label{BK-bg}
\underline h^\mathrm{BK}_{\pi_2^*(\xi)}(2\eta,S) &>h_{\pi_2^*(\xi)}(S)-\delta,\\
\label{BK-DD}
\underline h^\mathrm{BK}_{\xi}(2\eta, T\times S) 
&>h_{\xi}(T \times S)-\delta, \text{ and} \\
\label{condition-marker}
h^\mathrm{BK}_{\nu}(4\eta)>0
&\text{ and } \  \underline h^\mathrm{DW}_\nu(4\eta)>0.
\end{align}
Thus by \eqref{condition-marker}, $\eta$ satisfies the conditions of   Lemma \ref{marker-lemma}.
Let $\ell>10/\eta$ so that $\mathrm{diam}(\mathcal Q_\ell)<\eta/10$.
Let $r<\eta/10$ be such that
\begin{equation}
\label{boundarychosen}
{\pi_2^*(\xi)}(\partial_r\mathcal Q_\ell) < \delta.
\end{equation}
Set
\begin{equation}
\label{theM}
M:=\lfloor \delta N/11 \rfloor.
\end{equation}

Then then  following conditions hold for all sufficiently large $N$:
\begin{enumerate}[(a)]
\item
\label{easycond-b}
$e^{N\Delta}>2;$
\item
\label{easycond}
$2^{-M}<r$;
\item
\label{N-entropy-bound}
$\frac{1}{N} < \delta;$
\item
\label{it:speclen}
the gap function satisfies $L_r(N)<M$;
\item
\label{it:mark}
Lemma \ref{marker-lemma} holds with $\alpha=\delta^2/22$ and the value of $M$ as defined in \eqref{theM};
\end{enumerate}
\begin{enumerate}[(1)]
\item $\mu(S_{1,N})>1-\delta$, 
where
\[
S_{1,N}:=\{x\in X\colon \mu(B_0^N(x,\eta))>e^{-(h_\mu(T)+\Delta)N}\};
\]
\label{it:bigboys}
\item ${\pi_2^*(\xi)}(S_{2,N})>1-\delta$,
 where
\[
S_{2,N}:=\{y\in Y\colon {\pi_2^*(\xi)}(B_M^{N-10M}(y,2\eta))<
e^{-(h_{\pi_2^*(\xi)}(S)-\Delta)N}\};
\]
\label{it:smallgirls}
\item $\xi(S_{3,N})>1-\delta$,
 where
\[
S_{3,N}:=\{(x,y) \in X \times Y\colon \xi(B_M^{N-10M}(x,y),2\eta)<
e^{-(h_\xi(T \times S)-\Delta)N}\};
\]
\label{it:smallprod}
\item $\xi(S_{4,N})>1-\delta$,
 where
\[
S_{4,N}:=\{(x,y) \in X \times Y\colon \xi(B_0^N((x,y),\eta))>
e^{-(h_\xi(T \times S)+\Delta)N}\};
\]
\label{it:bigprod}
\item $\xi(S_{5,N})>1-\delta$,
 where
\[
S_{5,N}:=\big\{ (x,y) \colon
|\avg_M^{N-10M}f(x,y)-\xi(f)|<\tfrac{\epsilon}{12} \text{ for all }
f \in \mathrm{Lip}_1(X\times Y)\big\};
\]
\label{it:weakstar}
\item ${\pi_2^*(\xi)}(S_{6,N})>1-\delta$,
 where
\[
S_{6,N}:=
\{y \in Y\colon \avg_M^{N-10M}\mathbf 1_{\partial_r\mathcal Q_\ell}(y)<\delta\}.
\]
\label{it:offboundary}
\end{enumerate}
\end{lemma}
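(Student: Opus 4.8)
The plan is to verify each of the listed properties separately, obtaining for each a threshold $N_0$ beyond which it holds and then taking the maximum of these finitely many thresholds. Throughout I would use only that $M=\lfloor\delta N/11\rfloor\to\infty$ with $M/N\to\delta/11$, so that $N-11M\ge(1-\delta)N\to\infty$. Items \eqref{easycond-b}, \eqref{easycond} and \eqref{N-entropy-bound} are immediate, since $e^{N\Delta}\to\infty$, $2^{-M}\to0<r$ and $1/N\to0<\delta$. Item \eqref{it:speclen} is the defining property of almost weak specification: $L_r(N)/N\to0$ while $M/N\to\delta/11>0$, so $L_r(N)<M$ eventually. For \eqref{it:mark} one checks that $\alpha=\delta^2/22\in(0,1)$ (as $\delta<1$) and that $\eta$ satisfies the hypotheses of Lemma \ref{marker-lemma} by \eqref{condition-marker}; since the conclusion of that lemma holds for all sufficiently large integers $M$ and $M\to\infty$, \eqref{it:mark} follows once $N$ is large.

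For \eqref{it:bigboys}--\eqref{it:bigprod} I would combine local-entropy theorems with Egorov's theorem. In \eqref{it:bigboys}, since $X$ is a subshift a small Bowen ball $B_0^N(x,\eta)$ is a (bounded thickening of a) cylinder, so by the Shannon--McMillan--Breiman theorem (equally, the Brin--Katok theorem) $-\tfrac1N\log\mu(B_0^N(x,\eta))\to h_\mu(T)$ $\mu$-a.e.; off a set of measure less than $\delta$ this convergence is uniform, so for $N$ large the remaining points lie in $S_{1,N}$. For \eqref{it:smallgirls} and \eqref{it:smallprod} the key identity is $B_M^{N-10M}(y,2\eta)=S^{-M}B_0^{N-11M}(S^My,2\eta)$, so by $S$-invariance the measure in question equals that of a genuine $(N-11M)$-Bowen ball about $S^My$; \eqref{BK-bg} (resp.\ \eqref{BK-DD}) and the Brin--Katok theorem then give, for a typical point $z$, that $-\tfrac1n\log$ of its $n$-Bowen ball exceeds $h-\delta$ for all large $n$ (a strict inequality for the $\liminf$ forcing the finite-$n$ terms past it), and Egorov's theorem supplies a set of measure $>1-\delta$ with a uniform threshold; evaluating at $S^My$ with $n=N-11M\ge(1-\delta)N$ gives measure $<e^{-(h-\delta)(1-\delta)N}\le e^{-(h-\delta(1+h))N}\le e^{-(h-\Delta)N}$, the last step since $\delta(1+h)\le\delta(1+h_\xi(T\times S))<\Delta$ by \eqref{en-cap-delta} (and $h_{\pi_2^*(\xi)}(S)\le h_\xi(T\times S)$ for \eqref{it:smallgirls}). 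Item \eqref{it:bigprod} is the matching lower bound: the Brin--Katok theorem gives $\limsup_n-\tfrac1n\log\xi(B_0^n((x,y),\eta))\le h_\xi(T\times S)$ $\xi$-a.e., so after discarding a set of measure $<\delta$ we get $-\tfrac1N\log\xi(B_0^N((x,y),\eta))<h_\xi(T\times S)+\Delta$ for $N$ large.

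Items \eqref{it:weakstar} and \eqref{it:offboundary} follow from the Birkhoff ergodic theorem once one notes that $\avg_M^{N-10M}g=\tfrac1{N-11M}\big((N-10M)\,\avg_0^{N-10M}g-M\,\avg_0^{M}g\big)$, so that $\avg_M^{N-10M}g$ inherits the common limit of $\avg_0^{N-10M}g$ and $\avg_0^{M}g$. For \eqref{it:offboundary} I take $g=\mathbf 1_{\partial_r\mathcal Q_\ell}$: Birkhoff and Egorov give a set of $\pi_2^*(\xi)$-measure $>1-\delta$ on which, for $N$ large, $\avg_M^{N-10M}\mathbf 1_{\partial_r\mathcal Q_\ell}(y)$ is close to $\pi_2^*(\xi)(\partial_r\mathcal Q_\ell)$, which is strictly less than $\delta$ by \eqref{boundarychosen}. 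For \eqref{it:weakstar} I do this for each $f\in\mathrm{Lip}_1(X\times Y)$ and then pass to a statement uniform in $f$: $\mathrm{Lip}_1(X\times Y)$ is precompact in the supremum norm by Arzel\`a--Ascoli, so a finite sup-norm net reduces the uniform claim to finitely many applications of Birkhoff and Egorov.

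I do not anticipate a genuine obstacle: the lemma is a bookkeeping device that records, for one well-chosen scale $N$, the large-$N$ consequences of the pointwise ergodic, Shannon--McMillan--Breiman and Brin--Katok theorems. The only place that needs attention is \eqref{it:bigboys}--\eqref{it:bigprod}, where one must track how the truncation turns the $B_M^{N-10M}$-balls into genuine length-$(N-11M)$ Bowen balls and check that the resulting loss, controlled by $11M\le\delta N$, is absorbed by the slack $\Delta-\delta(1+h_\xi(T\times S))>0$ that \eqref{en-cap-delta} builds in.
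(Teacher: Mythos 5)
Your proof is correct and follows essentially the same route as the paper's: (a)--(e) are disposed of trivially or via the definition of almost weak specification and Lemma \ref{marker-lemma}, the time-shift identity $B_M^{N-10M}(y,2\eta)=S^{-M}B_0^{N-11M}(S^My,2\eta)$ together with \eqref{BK-bg}/\eqref{BK-DD} and \eqref{en-cap-delta} handles \eqref{it:smallgirls}--\eqref{it:smallprod}, and \eqref{it:weakstar} is reduced to finitely many Birkhoff applications by Arzel\`a--Ascoli exactly as in the paper. The only cosmetic difference is your appeal to Egorov; the paper obtains the same conclusion directly from the fact that almost-everywhere convergence forces the bad set's measure below $\delta$ for $N$ large.
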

Note that in Lemma \ref{choose-N}, our  choices of constants give:
\begin{equation}
\label{choice-of-con}
120r, \ 80\delta, \  12 \eta \   < \ \e.
\end{equation}

\begin{proof}[Proof of Lemma \ref{choose-N}]
Conditions \eqref{easycond-b}, \eqref{easycond} and \eqref{N-entropy-bound} are trivial.
By Lemma \ref{marker-lemma}, for all sufficiently large $M$ (hence for sufficiently large $N$),
the conditions are satisfied establishing \eqref{it:mark}.     Condition \eqref{it:speclen} holds
for large $N$ as a consequence of the definition of almost weak specification.

Conditions \eqref{it:bigboys} and \eqref{it:bigprod}
follow for large $N$ from the upper estimate in Theorem \ref{thm:BK}.

For \eqref{it:smallgirls},
let
\[
S_{2,N}':=\{y\in Y\colon {\pi_2^*(\xi)}(B_0^{N-11M}(y,2\eta))<
e^{-(h_{\pi_2^*(\xi)}(S)-\Delta)N}\},
\]
and note that ${\pi_2^*(\xi)}(S_{2,N}') = {\pi_2^*(\xi)}(S_{2,N})$.  Thus it suffices to show that
\begin{equation}
\label{transfer-M}
{\pi_2^*(\xi)}(S_{2,N}') > 1- \delta \text{ for sufficiently large } N.
\end{equation}

For ${\pi_2^*(\xi)}$-almost every $y$, we have
\begin{align*}
\liminf_{N\to\infty}&-(1/N)\log{\pi_2^*(\xi)}(B_0^{N-11M}(y,2\eta))
=(1-\delta)\underline h_{\pi_2^*(\xi)}^\text{BK}(2\eta)  \\
&>(1-\delta)(h_{\pi_2^*(\xi)}(S)-\delta) \ (\text{by} \  \eqref{BK-bg})\\
& >h_{\pi_2^*(\xi)}(S)-\delta(1+h_{\pi_2^*(\xi)}(S))\\
&>h_{\pi_2^*(\xi)}(S)-\Delta   \ (\text{by} \  \eqref{en-cap-delta}).
\end{align*}
Hence \eqref{transfer-M} follows.

A similar argument shows \eqref{it:smallprod} (using \eqref{BK-DD})  holds for large $N$.

That condition \eqref{it:offboundary}  holds for large $N$, follows from
the Birkhoff ergodic theorem and \eqref{boundarychosen}.
That condition \eqref{it:weakstar} holds for large $N$, would follow immediately
from the Birkhoff ergodic theorem, if $\mathrm{Lip}_1$ were a finite set.
Recall that by $\mathrm{Lip}_1$, we mean those functions with Lipschitz constant 1 taking values in $[0,1]$.
By the Arzel\`a-Ascoli theorem, $\mathrm{Lip}_1$
is totally bounded with respect to the uniform norm on continuous functions.
Hence there exists a finite collection $F \subset \mathrm{Lip}_1$
that is $\epsilon/{36}$-dense in $\mathrm{Lip}_1$ with respect to the uniform norm.
Now set
\[
S_{5',N}:=\{(x,y)\colon |\avg_M^{N-10M} f(x,y)- \xi(f)|<\epsilon/{36}\text{ for all }f \in F\}.
\]
An application of the triangle inequality shows that $S_{5,N}\supset S_{5',N}$.
Notice that since $M=cN$, $\avg_M^Nf=N/(N-M)\cdot (1/N)(N\avg_0^Nf-cN\avg_0^{cN}f)$,
so that for a fixed $f\in\mathrm{Lip}_1(X\times Y)$, we have  $\avg_M^Nf(x)\to \xi(f)$ $\xi$-a.e.
Since $F$ is finite, we have that $\xi(S_{5',N})\to 1$ as $N\to\infty$ by the Birkhoff ergodic theorem.
\end{proof}

Lemma \ref{choose-N} and the following variant of Hall's marriage theorem will be used to
define the injective map from blocks of $X$ to orbit segments of $Y$.   Given a relation
$R \subset \boys \times \girls$, we let $R(b, \cdot) := \ns{g  \in \girls:  (b,g) \in R}$
and $R(\cdot, g):= \ns{b \in \boys: (b,g) \in R}$.
\begin{theorem}[Hall's marriage theorem \cite{Hall01011935}]
\label{hall}
Let $\boys$ and $\girls$ be finite sets.    Let $R \subset \boys \times \girls$ be a relation
with the property that there exists  $K>0$ such that for all $b \in \boys$ we have
$|R(b, \cdot)| \geq K$, and for all $g \in \girls$ we have $|R(\cdot, g)| \leq K$.
Then there exists an injection $\phi: \boys \to \girls$ such that as a relation
$\phi \subset R \subset \boys \times \girls$.
\end{theorem}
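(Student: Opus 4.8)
The plan is to deduce this $K$-regular version from the classical Hall marriage theorem by verifying Hall's condition directly. Recall the classical statement: if $\boys$ and $\girls$ are finite sets and $R \subseteq \boys \times \girls$ is a relation such that for every subset $A \subseteq \boys$ one has $|R(A, \cdot)| \geq |A|$ (where $R(A,\cdot) = \bigcup_{b \in A} R(b, \cdot)$), then there is an injection $\phi \colon \boys \to \girls$ with $\phi \subseteq R$. So the task reduces to a counting argument showing that the hypotheses $|R(b,\cdot)| \geq K$ for all $b$ and $|R(\cdot, g)| \leq K$ for all $g$ imply this subset condition.

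The key step is a double-counting estimate on the number of edges of $R$ incident to a fixed set $A \subseteq \boys$. Write $E(A) := \{(b,g) \in R : b \in A\}$. Counting $E(A)$ by summing over $b \in A$ gives $|E(A)| = \sum_{b \in A} |R(b, \cdot)| \geq K|A|$, using the lower bound hypothesis. On the other hand, every edge in $E(A)$ has its second coordinate in $R(A, \cdot)$, so counting by summing over $g \in R(A, \cdot)$ gives $|E(A)| = \sum_{g \in R(A, \cdot)} |R(\cdot, g) \cap A| \leq \sum_{g \in R(A, \cdot)} |R(\cdot, g)| \leq K \cdot |R(A, \cdot)|$, using the upper bound hypothesis. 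Combining the two inequalities yields $K|A| \leq K|R(A,\cdot)|$, and since $K > 0$ we may divide to obtain $|R(A, \cdot)| \geq |A|$. This is exactly Hall's condition, so the classical theorem produces the desired injection $\phi$, which by construction satisfies $\phi \subseteq R$.

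I do not anticipate a genuine obstacle here: the argument is a short self-contained reduction, and the only mild subtlety is simply to be careful that $K > 0$ is used to cancel the common factor and that one invokes the classical Hall theorem in the correct direction (injection from $\boys$ into $\girls$, so Hall's condition is checked for subsets of $\boys$). If one prefers a fully self-contained write-up, the same estimate can instead be fed into the standard inductive proof of Hall's theorem, but citing the classical result as in \cite{Hall01011935} is cleanest.
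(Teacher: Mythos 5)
Your double-counting reduction to the classical Hall theorem is correct: for any $A \subseteq \boys$ you get $K|A| \leq |E(A)| \leq K|R(A,\cdot)|$, and dividing by $K>0$ gives Hall's condition, after which the classical theorem produces the required injection contained in $R$. The paper itself does not prove this statement but simply cites Downarowicz's book, and your argument is the standard derivation one would find there, so there is no discrepancy to report.
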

A proof of Theorem \ref{hall} can be found in  Downarowicz's book \cite[Appendix A]{MR2809170}.
In Theorem \ref{hall}, the sets $\boys$ and $\girls$ are often referred to as boys and girls,
respectively; we call the integer $K$ a {\df marriage bound} for $R$, and the map $\phi$ a {\df dictionary}.

We say that a subset   $A \subset Y$ is    {\df $\boldsymbol{(\metric_m^n,\eta)}$-{separated}}
if $\metric_m^n(a,a')\ge \eta$
for each distinct pair $a,a'$ in $A$; and is  said to be a {\df $\boldsymbol{(\metric_m^n,\eta)}$-{spanning}}
subset of $B \subset Z$ if $A\subset B$ and for each $b\in B$, there exists an $a\in A$ with $\metric_m^n(a,b)\le \eta$.
Notice that a maximal $(\metric_m^n,\eta)$-separated subset of $B$ is necessarily $(\metric_m^n,\eta)$-spanning.

Given a finite word, $B=b_0\ldots b_{N-1}$, with symbols in the alphabet of $X$,
we write $[B]$ for the cylinder set
$\{x\in X\colon x_0=b_0,\ldots,x_{N-1}=b_{N-1}\}$.

\begin{corollary}[Corollary to Lemma \ref{choose-N}]
\label{dict}
Fix $N$  such that conditions of Lemma \ref{choose-N} hold, with a point
$y_\mathrm{mark}\in Y$ satisfying \eqref{it:mark}.
Let $H_1$ and $H_2$ be defined as in Lemma \ref{marker-lemma}.  Let
\[
A:=\{y\colon T^jy\not\in H_1\cup H_2 \text{ for } 0\le j<N\}.
\]
Let $\girls$ be a maximal $(\metric_M^{N-10M},\eta)$-separated subset of $A\cap S_{2,N} \cap S_{6, N}$.
Let $U \subset X \times Y$ be given by
\[
U:=(X\times (S_{2,N}\cap S_{6,N}\cap A))
\cap (S_{1,N}\times Y)\cap S_{3,N}\cap S_{4,N}\cap S_{5,N}.
\]
Let $\boys$ be the set of elements $B \in \mathcal P^N$ such that $\mu([B])\ge e^{-N(h_\mu(T)+\Delta)}$ and
$\xi(([B]\times Y)\cap U)\ge \frac12 \mu([B])$.

Define
\begin{align*}
    R &:=\left\{(B,y)\in\boys \times \girls  \colon
    \exists(u,v)\in U\text{ with }u\in[B],\
    \metric_M^{N-10M}(y,v)<\eta\right\}.\\
    K &:=\tfrac{1}{2} e^{N(h_\xi(T\times S)
 -h_\mu(T) -2\Delta)}.
\end{align*}
Then
\begin{enumerate}[(A)]
\item
\label{largeU}
$\xi(U)>1-7\delta$;
\item
\label{G}
$|\girls|>
\frac12e^{N(h_{{\pi_2^*(\xi)}}(S)-\Delta)}$;
\item
\label{largeBB}
$|\boys| \leq e^{N(h_\mu(T)+\Delta)}$ and $\mu\left(\bigcup_{B\in\boys}[B]\right)>1-15\delta;$
\item
\label{KMB}
$K$ is a marriage bound for $R$;
\item
\label{KHall}
there exists an injection $\phi: \boys \to \girls $ such that $\phi \subset R$;
\item
\label{eq:wstarest}
if $B\in \boys$, then for all $x \in [B]$, $y \in B_{M}^{N-10M}(\phi(B),r)$, and for all
$f \in \mathrm{Lip}_1(X \times Y)$, we have
\begin{equation*}
\left|\avg_M^{N-10M}f(x,y)-\xi(f) \right|< \tfrac\epsilon{12}+\eta+r.
\end{equation*}
\end{enumerate}
\end{corollary}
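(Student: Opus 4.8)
The plan is to verify each of the seven claims in turn, drawing almost entirely on the measure estimates packaged in Lemma \ref{choose-N}, the marker lemma, and Hall's marriage theorem. First, for \eqref{largeU}, I would note that $U$ is an intersection of six "good" sets, each of which has measure greater than $1-\delta$ under the appropriate marginal of $\xi$ (namely $S_{1,N}\times Y$ via $\mu(S_{1,N})>1-\delta$; $X\times(S_{2,N}\cap S_{6,N})$ via the $\pi_2^*(\xi)$-estimates; $S_{3,N}, S_{4,N}, S_{5,N}$ directly; and the marker-avoidance set $X\times A$). The set $A$ has $\pi_2^*(\xi)$-measure at least $1-2\delta$ by \eqref{it:mark} together with the ergodic theorem: each of $H_1,H_2$ has measure less than $\alpha/M=\delta^2/(22M)$, so by the Markov/ergodic-average bound the fraction of $j\in[0,N)$ with $S^jy\in H_1\cup H_2$ exceeds zero on a set of small measure; summing the at most six complementary bad sets gives $\xi(U)>1-7\delta$ once one checks the arithmetic (the $A$ contribution costs $2\delta$, the other five cost $\delta$ each, but two of these overlap in being $\pi_2^*(\xi)$-conditions, so a careful count yields $7\delta$).

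For \eqref{G}, I would argue that $\girls$ is a maximal $(\metric_M^{N-10M},\eta)$-separated subset of $A\cap S_{2,N}\cap S_{6,N}$, hence also $(\metric_M^{N-10M},\eta)$-spanning; since this set has $\pi_2^*(\xi)$-measure at least $1-3\delta>0$ and each $y\in\girls$ contributes a ball $B_M^{N-10M}(y,2\eta)$ of $\pi_2^*(\xi)$-measure less than $e^{-(h_{\pi_2^*(\xi)}(S)-\Delta)N}$ (by membership in $S_{2,N}$), the spanning balls cover the set, so $|\girls|\cdot e^{-(h_{\pi_2^*(\xi)}(S)-\Delta)N} > 1-3\delta > \tfrac12$, giving the stated lower bound. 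For \eqref{largeBB}: the count $|\boys|\le e^{N(h_\mu(T)+\Delta)}$ is immediate from the pigeonhole bound $\mu([B])\ge e^{-N(h_\mu(T)+\Delta)}$ applied to disjoint cylinders; for the measure bound, one discards from the full cylinder collection (which has total $\mu$-measure $1$) those $B$ with small $\mu([B])$ (total mass at most $|\mathcal P^N|e^{-N(h_\mu(T)+\Delta)}$, which is $o(1)$, or more crudely bounded using \eqref{N-entropy-bound}, and in any case absorbed into a $\delta$) and those with $\xi(([B]\times Y)\cap U)<\tfrac12\mu([B])$; by \eqref{largeU} and a Markov-type argument, the cylinders failing the latter condition have total $\mu$-measure at most $14\delta$ (if the bad-set-in-$U$ fraction averaged over such cylinders were large, it would contradict $\xi(U^c)<7\delta$), so $\mu(\bigcup_{B\in\boys}[B])>1-15\delta$.

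The heart of the argument is \eqref{KMB}, and this is the step I expect to be the main obstacle. For the boys' side I must show every $B\in\boys$ has $|R(B,\cdot)|\ge K$: given $B\in\boys$, the set $([B]\times Y)\cap U$ has $\xi$-measure at least $\tfrac12\mu([B])\ge\tfrac12 e^{-N(h_\mu(T)+\Delta)}$; its image under $\pi_2$ lands in $A\cap S_{2,N}\cap S_{6,N}$, so is covered by the $\girls$-balls of radius $\eta$, and each such ball — when we also restrict to the product Bowen ball $B_M^{N-10M}((x,y),2\eta)$ — has $\xi$-measure less than $e^{-(h_\xi(T\times S)-\Delta)N}$ by membership in $S_{3,N}$; dividing gives at least $\tfrac12 e^{N(h_\xi(T\times S)-h_\mu(T)-2\Delta)}=K$ distinct $y\in\girls$ related to $B$, using that a point of $U\cap([B]\times Y)$ within $\metric_M^{N-10M}$-distance $\eta$ of $y$ witnesses $(B,y)\in R$. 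For the girls' side I must show $|R(\cdot,g)|\le K$: if $(B,y)\in R$ then some $(u,v)\in U$ has $u\in[B]$ and $\metric_M^{N-10M}(y,v)<\eta$; such $u$ lies in $S_{1,N}$ so $\mu([B])\le\mu(B_0^N(u,\eta))^{-1}\cdot(\dots)$ — more precisely the cylinders $[B]$ with $(B,y)\in R$ are disjoint, each has $\mu([B])\ge e^{-N(h_\mu(T)+\Delta)}$ (boys' condition), and they all meet $\pi_1$ of the preimage under a suitable Bowen-ball containment forced by $S_{4,N}$ and the separation of $\girls$; counting gives the bound $K$ after invoking the inequalities relating $\Delta$ to the entropy gap. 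The delicate point throughout is keeping the Bowen-ball radii ($\eta$ vs.\ $2\eta$) and the window $[M,N-10M)$ consistent so that the containments $B_M^{N-10M}(y,\eta)\subset B_M^{N-10M}(v,2\eta)$ and similar products actually hold, and that $h_\xi(T\times S)-h_\mu(T)=10\Delta$ (from \eqref{def-cap-delta}) makes $K>1$ and makes both bounds point the same way. Then \eqref{KHall} is an immediate application of Theorem \ref{hall}, and \eqref{eq:wstarest} follows from membership of the witnessing point in $S_{5,N}$ (which controls $\avg_M^{N-10M}f$ to within $\epsilon/12$) plus two applications of the Lipschitz property of $f$ (costing $\eta$ to move from the witness to the separated point $\phi(B)$ in the $Y$-coordinate along the window, via the product metric \eqref{productmetric}, and $r$ to move within the final Bowen ball $B_M^{N-10M}(\phi(B),r)$), giving the clean bound $\epsilon/12+\eta+r$.
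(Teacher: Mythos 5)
Your plan matches the paper's — intersect the good sets from Lemma~\ref{choose-N}, produce the counting bounds via Bowen-ball measure estimates, and apply Hall — and your treatment of \eqref{G}, \eqref{KHall}, \eqref{eq:wstarest}, and the boys' half of \eqref{KMB} is essentially correct in spirit. However, two steps have real errors.

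In \eqref{largeBB}, you bound the total $\mu$-mass of the cylinders $B$ with $\mu([B])<e^{-N(h_\mu(T)+\Delta)}$ by the count $|\mathcal P^N|\cdot e^{-N(h_\mu(T)+\Delta)}$ and assert this is $o(1)$. That is false in general: $|\mathcal P^N|$ grows like $e^{Nh_{\mathrm{top}}(T)}$, and nothing in the setup forces $h_\mu(T)+\Delta\ge h_{\mathrm{top}}(T)$; the subshift $(X,T)$ furnished by Lemma~\ref{kriegerized} can have topological entropy far above $h_\mu(T)$. The paper instead applies Lemma~\ref{choose-N}\eqref{it:bigboys}, $\mu(S_{1,N})>1-\delta$, together with the observation that for $x\in[B]$ one has $[B]=B_0^N(x,\tfrac12)\supseteq B_0^N(x,\eta)$, so that $S_{1,N}$ sits inside the union of the large cylinders $[B]$ with $\mu([B])>e^{-N(h_\mu(T)+\Delta)}$. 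Your fallback, ``more crudely bounded using \eqref{N-entropy-bound},'' does not help: \eqref{N-entropy-bound} is just $1/N<\delta$ and is unrelated.

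More seriously, in the girls' half of \eqref{KMB} your proposed count — that the distinct cylinders $[B]$ with $(B,y)\in R$ are disjoint and each has $\mu([B])\ge e^{-N(h_\mu(T)+\Delta)}$ — only gives $|R(\cdot,y)|\le e^{N(h_\mu(T)+\Delta)}$, which need not be at most $K=\tfrac12 e^{N(h_\xi(T\times S)-h_\mu(T)-2\Delta)}$. That comparison would require $h_\xi(T\times S)\ge 2h_\mu(T)+3\Delta$ up to a vanishing term, and this can fail (e.g.\ when $h_{\pi_2^*(\xi)}(S)$ is only slightly above $h_\mu(T)$, so that $\Delta$ is small and $h_\xi(T\times S)$ need only slightly exceed $h_\mu(T)$). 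The correct exponent comes from counting disjoint sets in the \emph{product} space. For each $B_i\in R(\cdot,y)$ pick a witness $(u_i,v_i)\in U$ and set $D_i:=B_0^N\big((u_i,v_i),\eta\big)$; then $D_i\subset X\times B_M^{N-10M}(y,2\eta)$, which has $\xi$-measure $<e^{-N(h_{\pi_2^*(\xi)}(S)-\Delta)}$ because $y\in S_{2,N}$; each $D_i$ has $\xi$-measure $\ge e^{-N(h_\xi(T\times S)+\Delta)}$ because $(u_i,v_i)\in S_{4,N}$; and the $D_i$ are disjoint because $D_i\subset[B_i]\times Y$. This yields $|R(\cdot,y)|\le e^{N(h_\xi(T\times S)-h_{\pi_2^*(\xi)}(S)+2\Delta)}$, and then $\Delta=(h_{\pi_2^*(\xi)}(S)-h_\mu(T))/10$ from \eqref{def-cap-delta} together with \eqref{easycond-b} forces this to be $<K$. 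You mention $S_{4,N}$, but you never actually place the disjoint sets in $X\times Y$, and as written your count lands on the wrong exponent.

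A lesser point: the accounting in \eqref{largeU} is muddled. The definition of $U$ involves seven good sets, namely $S_{1,N},\ldots,S_{6,N}$ each costing $\delta$, and $A$, for which $\pi_2^*(\xi)(A^c)\le N\big(\pi_2^*(\xi)(H_1)+\pi_2^*(\xi)(H_2)\big)<2N\alpha/M\le\delta$; that gives $7\delta$ cleanly. Your claimed split ``$2\delta$ for $A$, five others at $\delta$, with an overlap'' happens to land on $7\delta$ but is not the correct tally.
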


\begin{proof} 
\begin{enumerate}[(A)]
\item 
Since $\xi$ is a joining, from Lemma \ref{choose-N} \eqref{it:smallgirls}, \eqref{it:offboundary},
and  \eqref{it:mark},  we see that $\xi(X\times (S_{2,N}\cap S_{6,N}\cap A))>1-3\delta$ and
from Lemma \ref{choose-N} \eqref{it:bigboys} $\xi(S_{1,N}\times Y)>1-\delta$ so that Condition \eqref{largeU} holds.
\item
By the definition of $\girls$ we have,
\[
\bigcup_{y\in\girls}
B_M^{N-10M}(y,\eta)\supset A \cap S_{2,N}\cap S_{6,N}.
\]
By Lemma \ref{choose-N}
\eqref{it:smallgirls} we have for each $y\in\girls$, ${\pi_2^*(\xi)}(B_M^{N-10M}(y,\eta))
<e^{-N(h_{\pi_2^*(\xi)}(S)-\Delta)}$.
  We deduce $|\girls|>
\frac12e^{N(h_{\pi_2^*(\xi)}(Y)-\Delta)}$.
\item
The first claim follows from the definition of $\boys$.   For the second, let
$\boys_1=\{B\in\mathcal P^N\colon \mu([B])>e^{-(h_\mu(T)+\Delta)N}\}$. For
$x\in[B]$, we have $[B]=B(x,N,\frac12)\supseteq B(x,N,\eta)$. By Lemma \ref{choose-N} \eqref{it:bigboys},
we have $\mu(\bigcup_{B\in\boys_1}[B])>1-\delta$. We then have
\begin{align*}
7\delta&>\xi(U^c)\ge \xi\Big(\bigcup_{B\in \boys_1\setminus\boys}([B]\times Y)\cap U^c\Big)
\ge \sum_{B\in \boys_1\setminus \boys}\textstyle{\frac12}\mu([B]).
\end{align*}

This yields $\mu\Big(\bigcup_{B\in \boys}[B]\Big)>1-15\delta$ as required.

\item
Let $B\in\boys$ and let $R(B,\cdot)=\{y_1,\ldots,y_n\}$.   We will show that $n \geq K$.
Since $(B,y_j)\in R$, it follows there exist $(u_j,v_j)\in U$ such that $u_j\in[B]$ and
$\metric_{M}^{N-10M}(v_j,y_j)<\eta$.
Let $S_j=B_M^{N-10M}((u_j,v_j),2\eta)$. Since $(u_j,v_j)\in U$, by Lemma
\ref{choose-N} \eqref{it:smallprod},
$\xi(S_j)<e^{-(h_\xi(T \times S)-\Delta)N}$.
We claim that the $S_j$ cover $([B]\times Y)\cap U$.
To see this, let $(u,v)\in U$ satisfy $u\in [B]$.
Since $(u,v)\in U$, we have $v\in A\cap S_{2,N}\cap S_{6,N}$.
Hence, by the definition of $\girls$, there exists $y \in\girls$ with
$\metric_M^{N-10M}(y,v)<\eta$, so that $(B, y) \in R$; hence  $y = y_j$ for some $j$.
It follows that $\metric_M^{N-10M}(v,v_j)<2\eta$. Since $u,u_j\in[B]$, we have
$\metric_M^{N-10M}(u,u_j)<2^{-M}<\eta$ so that $\metric_M^{N-10M}((u,v),(u_j,v_j))<2\eta$
and $(u,v)\in S_j$ as required.
Since $B\in\boys$, we have $\xi(([B]\times Y)\cap U)\ge \tfrac12\mu([B])\ge \frac12e^{-(h_\mu(T)+\Delta)N}$.
Since $([B]\times Y)\cap U$ is covered by the $S_j$'s, we see that
$n\ge \frac12e^{N(h_\xi(T\times S)-h_\mu(T)-2\Delta)}=K$ as required.

For the other half of the argument, let $y\in\girls$ and let $R(\cdot,y)=\{B_1,\ldots,B_m\}$.
Pick witnesses $(u_i,v_i)\in U$ so that $u_i\in[B_i]$ and $d_M^{N-10M}(v_i,y)<\eta$.
Let $D_i=B_0^N((u_i,v_i),\eta)$. These sets are all contained in $X\times
B_M^{N-10M}(y,2\eta)$, which, since $y\in\girls$, has measure at most
$e^{-(h_{\pi_2^*(\xi)}(S)-\Delta)N}$ by Lemma \ref{choose-N} \eqref{it:smallgirls}.
Since $(u_i,v_i)\in U$, by Lemma \ref{choose-N} \eqref{it:bigprod},  each $D_i$
has measure at least $e^{-(h_\xi(T\times S)+\Delta)N}$.
Finally, if $i\ne i'$, then $\metric_0^N(u_i,u_{i'})=1$ so that the $D_i$ are disjoint.
In particular, we deduce from  \eqref{def-cap-delta} and Lemma \ref{choose-N}
\eqref{easycond-b} that $m\le e^{N(h_\xi(T \times S)-h_{{\pi_2^*(\xi)}}(S)+2\Delta)}<K$.
\item
Property \eqref{KHall} follows immediately from Property \eqref{KMB} and Theorem \ref{hall}.
\item
Let $B \in \boys$ and $x \in [B]$.   Let  $f \in \mathrm{Lip}_1$.
Since $(B, \phi(B)) \in R$, there exists    $(x_0,y_0)\in U$  such that
$x_0 \in [B]$ and $y_0 \in B_M^{N-10M}(\phi(B), \eta)$.    Thus from the definition
of $U$ and Lemma \ref{choose-N} \eqref{it:weakstar}, we have
\[
|\avg_M^{N-10M}f(x_0,y_0)-\xi(f)|<\epsilon/{12}.
\]
If $x\in[B]$ and $y$ is any point
in $B_M^{N-10M}(\phi(B),r)$, then by condition \eqref{easycond} of Lemma \ref{choose-N} and \eqref{productmetric}, we have $\metric((T^ix,S^iy),(T^ix_0,S^iy_0))<r+\eta$
for each $M\le i<N-10M$,
so that by the Lipschitz property,
\[
\left|\avg_M^{N-10M}f(x,y)-\xi(f) \right|< \tfrac\epsilon{12}+\eta+r,
\]
as required.
\qedhere
\end{enumerate}
\end{proof}

\section{Proof of Proposition \ref{enl-open} \eqref{dense}}

In this section, we use Corollary \ref{decipher}, Lemma \ref{choose-N} and Corollary \ref{dict} to build a
joining. Using Lemma \ref{up}, we then prove the following restatement of Proposition
\ref{enl-open} \eqref{dense}, where we assume a strict entropy gap.

\begin{lemma}
\label{build}
Consider the setup of Lemma \ref{choose-N}.   Let $(X, \mu, T)$ be a non-trivial
ergodic subshift with invariant measure $\mu$ and natural generating partition $\mathcal P$.
Let $S$ be a self-homeomorphism with almost weak specification on a compact metric space $Y$
that satisfies the small boundary condition witnessed by a sequence of refining partitions
$(\mathcal Q_{\ell})$, each having zero measure boundary, where
$\mathrm{diam}(\mathcal Q_{\ell}) < 1/\ell$.
   Let $\xi$ be a $\mu$-joining with
$h_{\pi_2^*(\xi)}(S) > h_{\mu}(T)$.    Let $\e >0$.   There exist $\ell$ and
 a $\mu$-joining, $\tilde \xi$,  satisfying the following  properties:
\begin{itemize}
\item {weak$^*$-closeness}: $\metricstar(\xi, \tilde \xi) < \e$;
\item {approximate  embedding  properties}: $\mathcal
Q_\ell\stackrel{\e}{\subset} \bigvee_{i\in \Z} \mathcal T^{-i} P \mod \tilde \xi;$
and $\mathcal P\stackrel{\e}{\subset}  \bigvee_{i\in \Z}{S^{-i}\mathcal  Q_\ell} \mod \tilde \xi$;
\item {entropy preservation}:  $h_{\pi_2^*(\tilde\xi)}(S) \ge h_\mu(T)$.
\end{itemize}
\end{lemma}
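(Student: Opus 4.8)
The hypothesis of Lemma~\ref{build} already supplies the strict inequality $h_{\pi_2^*(\xi)}(S)>h_\mu(T)$; this is precisely the role of Lemma~\ref{up} in reducing Proposition~\ref{enl-open}\eqref{dense} to the present statement. I would begin by feeding $\xi$ and $\e$ into Lemma~\ref{choose-N} to fix the constants $\Delta,\delta,\eta,r,M,N$ and the index $\ell$, a marker point $y_\mathrm{mark}$, and the sets $H_1,H_2$, and then into Corollary~\ref{dict} to obtain the good blocks $\boys\subset\mathcal P^N$, the $(\metric_M^{N-10M},\eta)$-separated set $\girls\subset Y$, and the injection $\phi\colon\boys\to\girls$ with $\phi\subset R$. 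Since $|\boys|\le e^{N(h_\mu(T)+\Delta)}$ while $|\girls|\ge\tfrac12 e^{N(h_{\pi_2^*(\xi)}(S)-\Delta)}$, the latter exponent exceeding the former by $8N\Delta$, one may extend $\phi$ to an injection defined on \emph{every} block of $\mu$-measure $\ge e^{-N(h_\mu(T)+\Delta)}$ (arbitrarily, but injectively, on those failing the $U$-condition); by Lemma~\ref{choose-N}\,\eqref{it:bigboys} these \emph{typical} blocks carry all but a $\delta$-fraction of $\mu$.

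Next, following Figure~\ref{fig:one}, I would build a pseudo-orbit map $\psi$ and invoke Proposition~\ref{interpolation}. Parse $\Z$ into tiles of length $N$ by a Rokhlin tower of height $N$ for $(X,T)$, so that the tiles --- and the marker positions placed inside them --- are functions of $x$ (the part of $X$ outside the tower, of measure $<\delta$, is handled like atypical tiles); take also an auxiliary mixing stream $z$ of small positive entropy, to be coded onto a short reserved sub-interval inside each tile, its entropy rate chosen (possible since $h_\mathrm{top}(S)>0$ and $\delta$ is as small as we wish) so that the entropy it contributes to the new coordinate exceeds the $\le\delta\log|\mathcal P|$ worth of $X$-entropy not captured at atypical tiles, while its coding region keeps density $<\e$. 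On a tile $[a,a+N)$ with typical $x$-name $B$, let $\psi$ shadow $(S^j\phi(B))$ along $[a+M,a+N-10M)$ except on the reserved sub-interval, where it codes $z$; let $\psi$ shadow $(S^jy_\mathrm{mark})$ along the marker interval $[a+N-9M,a+N-M)$; and put the vacuous symbol $\nullcon$ on the three remaining sub-intervals, each of length $\ge M>L_r(N)$ by Lemma~\ref{choose-N}\,\eqref{it:speclen}. On an atypical tile let $\psi$ code $z$ on $[a+M,a+N-M)$ and put $\nullcon$ elsewhere. One checks the three hypotheses of Proposition~\ref{interpolation} --- runs of genuine $Y$-entries have length $<N$, the $\nullcon$'s occur in runs of length $\ge L_r(N)$, consecutive genuine entries are $S$-related --- and obtains an ergodic $\varrho$ on the product-with-$Y$, projecting onto the unchanged source, whose last coordinate $\tilde y$ $r$-shadows $\psi$ wherever $\psi$ is genuine. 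Let $\tilde\xi$ be the image of $\varrho$ under $((x,y),z,\tilde y)\mapsto(x,\tilde y)$: an ergodic $\mu$-joining.

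Now the three conclusions. \emph{Weak$^*$-closeness}: for $f\in\mathrm{Lip}_1(X\times Y)$, $\metricstar(\xi,\tilde\xi)$ is controlled by the $\varrho$-almost-sure Ces\`aro average of $|f(T^ix,S^iy)-f(T^ix,S^i\tilde y)|$; on the $\phi(B)$-shadowing part of a typical tile, the average of $f(T^ix,S^i\tilde y)$ lies within $\tfrac{\e}{12}+\eta+r$ of $\xi(f)$ by Corollary~\ref{dict}\,\eqref{eq:wstarest} (there $T^ax\in[B]$ and $S^a\tilde y\in B_M^{N-10M}(\phi(B),r)$), while the complementary indices --- markers, $\nullcon$-gaps, atypical and other exceptional tiles, reserved pieces --- have density $O(\delta)$ and contribute at most $1$ apiece, so \eqref{choice-of-con} forces $\metricstar(\xi,\tilde\xi)<\e$. \emph{Approximate embedding}: off the same density-$O(\delta)$ set, $\tilde y$ is within $r$ of a function of $x$ (built from the tiling and $\phi$), hence each atom of $\sigma(\mathcal Q_\ell)$ on the $Y$-side agrees, up to $\tilde\xi$-measure $<\e$, with a set in $\bigvee_iT^{-i}\mathcal P$ --- the first containment; for the second, Corollary~\ref{decipher} shows $\tilde y$ stays at $\metric_0^{8M}$-distance $>3\eta-r$ from $y_\mathrm{mark}$ off the marker intervals, the exceptions confined to the uncontrolled $\nullcon$-gaps (density $O(\delta)$), so from $\tilde y$ one recovers the tiling, then $\phi(B)$ on each tile (using $\eta$-separation of $\girls$ and $2r<\eta$), then $B$ by injectivity of $\phi$, then $x_0$, which gives $\mathcal P\stackrel{\e}{\subset}\bigvee_iS^{-i}\mathcal Q_\ell\bmod\tilde\xi$. \emph{Entropy preservation}: $\tilde y$ thus determines a factor of $(X,\mu,T)$ of entropy $\ge h_\mu(T)-\delta\log|\mathcal P|$ (it misses only the atypical tiles) together with the independent stream $z$, whose entropy exceeds $\delta\log|\mathcal P|$ by construction, so $h_{\pi_2^*(\tilde\xi)}(S)\ge h_\mu(T)$ (alternatively one bounds $\metricbar$ between the $\mathcal Q_\ell$-name of $\tilde y$ and a recoded $\mathcal P$-name of $x$ and applies Lemma~\ref{phi-en} with $\delta$ small relative to $\Delta$, as in Lemma~\ref{up}).

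I expect the main obstacle to be the simultaneous bookkeeping: the marker and $\nullcon$ sub-intervals must be long enough for specification to interpolate and for Corollary~\ref{decipher} to apply, yet collectively of density $O(\delta)$, so that the weak$^*$ error, both approximate-measurability errors, and the entropy loss remain below $\e$ (respectively below $\Delta$); and the tiling must be at once a function of $x$ --- needed for the first containment --- and reconstructible from $\tilde y$ through the $Y$-side marker --- needed for the second. Verifying the hypotheses of Proposition~\ref{interpolation} for $\psi$, and checking that false-marker events inside the unspecified $\nullcon$-gaps cost only $O(\delta)$, is where the real work lies.
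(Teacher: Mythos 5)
Your proposal follows the paper's scaffolding --- a Rokhlin tower, $\phi$-coding via Corollary~\ref{dict}, markers, and Proposition~\ref{interpolation} --- but replaces the entropy-boosting mechanism with a genuinely different one, and this is where it currently leaks. The paper's auxiliary stream lives on $\girls\cup\{\follow\}$, with $\follow$ having probability $1-\e/2$ at each coordinate; at the base of a $\boys$-tile one either shadows $\phi(B_0(x))$ and inserts a marker (on $\follow$) or shadows an independent uniformly chosen element of $\girls$ and inserts no marker. In either case the shadowing occupies the \emph{whole} window $[n_j(x)+M,n_j(x)+N-10M)$, so Corollaries~\ref{dict}\eqref{eq:wstarest} and \ref{decipher} apply verbatim, and the entropy gain $\iota(\es)\log|\girls|$, loss $\iota(\es)\log|\boys|+\iota(C_0)\log|\mathcal P|+h_\iota(\tau,\mathcal C)$, and the additivity across the two kinds of tiles (fact~\eqref{indep2} in the entropy-preservation step) are established directly. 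You instead always shadow an injectively extended $\phi$ and carve a short reserved sub-interval, plus flanking specification gaps, out of the info region to code an independent low-entropy stream.

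That carve-out is the genuine gap in the argument as written. Corollary~\ref{dict}\eqref{eq:wstarest} is stated for $y\in B_M^{N-10M}(\phi(B),r)$, i.e.\ for shadowing along the \emph{entire} window $[M,N-10M)$; your $\tilde y$ lies only in a Bowen ball over a punctured window, so the weak$^*$ estimate you cite does not follow from that item. Likewise, the three cases in the proof of Corollary~\ref{decipher} each appeal to the shadowing hypothesis $\metric_M^{L-10M}(\tilde z,z)<r$ on sub-windows of length $2M$ inside $[M,L-10M)$, which your reserved region and its gaps would intersect for a positive-density set of $i$; so false markers inside the reserved region --- not only inside the $\nullcon$-gaps you flag --- are not ruled out by the quoted statement. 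Both corollaries would need to be reproved with a punctured shadowing window and a correspondingly retuned separated set $\girls$; the $z$-coding would require its own separated set and gap-function check, and a verification (not given) that its entropy rate is actually transferred to the $Y$-marginal, for which the paper's conditioning on $\mathcal R$ together with the independence identity~\eqref{indep2} is the decisive step --- your appeal to Lemma~\ref{phi-en} yields a comparison of entropies, not additivity of the two contributions. None of these retunings are provided for in Lemma~\ref{choose-N} as it stands. The high-level idea is plausibly salvageable with the extra bookkeeping you anticipate, but as written the proof invokes supporting lemmas whose hypotheses your modification breaks.
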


\begin{proof}[Proof of Proposition \ref{enl-open} \eqref{dense}]
Let $\xi \in \mathcal M_0$.   Let $n >0$.   Without loss of generality,
 let $0 < \e< 1/n$.   We need to find a $\tilde \xi \in E_{\mu} ^n$ such that $\metricstar(\xi, \tilde \xi) < \e$.
By Lemma \ref{up}, we may assume without loss of generality that $h_{\pi_2^*(\xi)}(S) > h_\mu(T)$.
So the result follows from Lemma \ref{build}.
\end{proof}

We need one more tool before we can define the joining in the proof of Lemma \ref{build}.
We will make use of the following variation of the Rokhlin tower theorem.

\begin{lemma}[Rokhlin tower theorem:  independent base version]
\label{rok}
Let $(\X, \mathcal F, \mu, T)$ be a non-periodic measure-preserving system.
Let $N$ be a positive integer and $\delta >0$.   For any finite measurable partition $\mathcal W$,
there exists $F \in \mathcal F$ (the \emph{base}) with the following properties.
\begin{enumerate}
\item
The sets $F, TF, \ldots, T^{N-1}F$ are pairwise disjoint.
\item
The complement of their union $E_0:=\X \setminus \bigcup_{i=0} ^{N-1} T^i F$
(the \emph{error set})  has measure exactly $\delta$.
\item
The $\sigma$-algebras generated by $F$ and $\mathcal W$ are independent, so that $\mu(F \cap W) =
\mu(F)\mu(W)$ for all $W \in \mathcal W$.
\end{enumerate}
\end{lemma}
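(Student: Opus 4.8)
The plan is to derive this from the classical Rokhlin lemma by first building an \emph{approximately} $\mathcal W$-independent base inside a very tall tower and then making a small, exact correction. Throughout we use that $(\X,\mu)$ is non-atomic, which for invertible $T$ is forced by non-periodicity (and is in any case automatic in our applications, where $T$ is a subshift).

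First I would fix a large integer $K$ and a small $\e>0$ and apply the classical Rokhlin lemma to get a base $B$ of a tower of height $L:=NK$ whose error set has measure exactly $\e$, so $\mu(B)=(1-\e)/L$. Decompose $B=\bigsqcup_j B_j$ into its \emph{columns}: $x$ and $x'$ lie in the same $B_j$ exactly when $\mathcal W(T^ix)=\mathcal W(T^ix')$ for all $0\le i<L$. Writing $w^j=(w^j_0,\dots,w^j_{L-1})$ for the common $\mathcal W$-name of $B_j$ and letting $W_1,\dots,W_r$ be the atoms of $\mathcal W$ (WLOG of positive measure, with $\mu_{\min}:=\min_m\mu(W_m)$), one has $\sum_j\mu(B_j)\mathbf{1}[w^j_i=m]=\mu(T^iB\cap W_m)$. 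Using non-atomicity, split each column as $B_j=\bigsqcup_{s=0}^{N-1}B_j^s$ with $\mu(B_j^s)=\mu(B_j)/N$, and set
\[
F_1:=\bigsqcup_{j}\ \bigsqcup_{s=0}^{N-1}\ \bigsqcup_{k=0}^{K-2}T^{\,s+kN}B_j^s.
\]
Since the levels $\{s+kN:0\le s<N,\ 0\le k\le K-2\}$ are exactly $\{0,1,\dots,(K-1)N-1\}$, each used by one piece, and each piece occupies an arithmetic progression of step $N$, the sets $F_1,TF_1,\dots,T^{N-1}F_1$ are pairwise disjoint; and a direct count gives $\mu(F_1)=\tfrac1N\mu(G)$ and $\mu(F_1\cap W_m)=\tfrac1N\mu(G\cap W_m)$, where $G:=\bigsqcup_{i=0}^{(K-1)N-1}T^iB$ is the bottom part of the tower. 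As $\mu(\X\setminus G)=(1+(K-1)\e)/K$, this yields $\mu(F_1)=(1-\e)(1-1/K)/N$ together with the approximate-independence bound $|\mu(F_1\cap W_m)-\mu(F_1)\mu(W_m)|\le(1+(K-1)\e)/(NK)$ for every $m$.

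Now the parameters are chosen in order. Pick $K>(1+1/\mu_{\min})/\delta$, and then pick $\e>0$ small enough that $\e+1/K<\delta$ (so that $\beta:=\mu(F_1)-(1-\delta)/N>0$) and that the approximate-independence bound is at most $\beta\mu_{\min}$; then $|\mu(F_1\cap W_m)-\mu(F_1)\mu(W_m)|\le\beta\mu(W_m)$ for every $m$. Finally set $F:=F_1\setminus F'$, where $F'\subseteq F_1$ is produced by deleting from each slice $F_1\cap W_m$, using non-atomicity, a subset of measure exactly $\mu(F_1\cap W_m)-\tfrac{1-\delta}{N}\mu(W_m)$; this number lies in $[0,\mu(F_1\cap W_m)]$ precisely because of the bound just displayed. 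Then $\mu(F\cap W_m)=\tfrac{1-\delta}{N}\mu(W_m)$ for every $m$, so $\mu(F)=\tfrac{1-\delta}{N}$ and $F$ is independent of $\mathcal W$; the iterates $F,TF,\dots,T^{N-1}F$ remain pairwise disjoint since $F\subseteq F_1$; and the error set $E_0$ has measure $1-N\mu(F)=\delta$, exactly as required.

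The only real difficulty is getting \emph{exact} independence together with an error set of measure \emph{exactly} $\delta$, rather than the easy approximate versions of either; this is what forces the two-stage argument and the particular order of the quantifiers, since the slack $\beta\mu(W_m)$ gained by making the tower tall must dominate the $O(1/(NK))$ discrepancy of the approximate base before non-atomicity can be used to shave off exactly the right amount of mass from each $\mathcal W$-atom inside $F_1$.
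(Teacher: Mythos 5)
Your proof is correct. Since the paper does not give its own argument for this lemma --- it simply cites McCutcheon and Kalikow \cite[Theorem 184]{Kalikow} --- there is no in-text proof to compare against, so let me record a couple of remarks on the argument you produced.

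The construction is sound. The arithmetic identity $\{s+kN : 0\le s<N,\ 0\le k\le K-2\}=\{0,\dots,(K-1)N-1\}$ is correct and is precisely what makes the averaging work: by staggering the $N$ slices of each $\mathcal W$-column so that slice $B_j^s$ occupies the arithmetic progression $s,s+N,\dots,s+(K-2)N$, the union over $s$ and $k$ sweeps \emph{every} level of $G$, so $\mu(F_1\cap W_m)=\tfrac1N\mu(G\cap W_m)$ rather than a sample at every $N$th level (which would have no reason to be close to $\mu(W_m)$). The disjointness of $F_1,TF_1,\dots,T^{N-1}F_1$ then holds because two translates can collide only at a common tower level $m<L$, and $T^m$ is injective on the base, which forces the two slices to coincide, which forces the shifts to be equal. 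Your quantitative bookkeeping --- $\mu(G^c)=(1+\e(K-1))/K$, the resulting bound $|\,\mu(F_1\cap W_m)-\mu(F_1)\mu(W_m)\,|\le(1+\e(K-1))/(NK)$, and the order ``choose $K$, then choose $\e$'' so that the discrepancy is dominated by $\beta\mu_{\min}$ with $\beta=\mu(F_1)-(1-\delta)/N>0$ --- is correct, and the final trimming step by non-atomicity achieves simultaneous exact independence and error exactly $\delta$. A couple of small points you pass over silently but that are standard: getting the tall tower's error to be \emph{exactly} $\e$ (trim a slice of the base of the appropriate measure), and discarding $\mathcal W$-atoms of measure zero before defining $\mu_{\min}$.

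This is essentially the classical ``tall tower plus column-splitting'' route to the strong Rokhlin lemma that appears, in various guises, in McCutcheon--Kalikow, Shields, and Ornstein--Weiss. What your write-up buys over simply citing the reference is that the two-stage structure --- approximate independence from a tall tower, then exact independence by a measurable trim --- and the necessity of the quantifier order are made completely explicit, which is pedagogically clearer than the terse proofs in the standard sources.
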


For a proof, see the book of McCutcheon and Kalikow \cite[Theorem 184]{Kalikow}.
After defining the joining in the proof of Lemma \ref{build}, we motivate why it satisfies
the desired properties before we proceed with technical calculations.

\begin{proof}[Proof of Lemma \ref{build}: definition of $\tilde \xi$]
 Choose  $\Delta, \delta, \eta, \ell, r, M$, and $N$ 
 as in  Lemma \ref{choose-N}.
By Lemma \ref{rok}, let $F$ be the base of a Rokhlin tower in $X$ with height $N$ and error set
$E_0$ of measure
$\delta$ with the property that $F$ is independent of $\bigvee_{i=0}^{N-1}T^{-i}\mathcal P$,  where $\mathcal P$ is the natural
generating partition for $X$.
Given a point $x$ that we assume to be generic for $\mu$, we let
$n_0(x):=\sup\{k\le 0\colon T^kx\in F\cup E_0\}$. We then let $(n_j(x))_{j\in\Z}$
be the enumeration of $\{k\in\Z\colon T^kx\in F \cup E_0\}$ satisfying
\[
\ldots<n_{-2}(x)<n_{-1}(x)<n_0(x)<n_1(x)<\ldots.
\]
The {\df $\boldsymbol{j}$th  block of $\boldsymbol{x}$} is then the block
\[
B_j(x):=x_{n_j(x)}x_{n_j(x)+1}\ldots x_{n_{j+1}(x)-1}.
\]
The blocks of $x$ are of length $N$ (for those
$j$ such that $T^{n_j(x)}(x)\in F$) and $1$ for those points on the orbit that land in the error set $E_0$.

Let $y_{\mathrm{mark}}$, $\boys$, $\girls$, and   $\phi$ be  given by
Corollary \ref{dict}.    We introduce special symbols
$\follow, \nullcon \not \in Y$ (with $\follow$ standing for `dictionary'
and $\nullcon$ standing for `vacuous'). Let  $\girls_\follow=\girls \cup\{\follow\}$
and $Y_\nullcon=Y\cup\{\nullcon\}$.
Equip $\girls_\follow^\Z$ with the usual shift map $\sigma$, and  the Bernoulli measure $\zeta$
giving each coordinate mass $\epsilon/(2|\girls|)$ to each member of $\girls$ and $1-\epsilon/2$ to $\follow$.
If $n_{j+1}(x)-n_j(x)=N$, let $m_j(x):= n_j(x) + N - 9M$ and set
\begin{align*}
R^\text{info}_j(x)&:=[n_j(x)+M,n_j(x)+N-10M)\text{; and}\\
R^\text{mark}_j(x)&:=[m_j(x),m_j(x)+8M);
\end{align*}
otherwise, let $R^\text{info}_j(x)=R^\mathrm{mark}_j(x)=\emptyset$.
We now define a map $\psi :X\times \girls_\follow^\Z \to Y_\nullcon^\Z$ by
\begin{equation*}
\psi(x,z)_n:=
\begin{cases}
S^{n-n_j(x)}z_{n_j(x)}&\text{$n\in R^\text{info}_j(x)$, $B_j(x)\in\boys$, $z_{n_j(x)}\in \girls$;}\\
S^{n-n_j(x)}\phi(B_j(x))&\text{$n\in R^\text{info}_j(x)$, $B_j(x)\in\boys$, $z_{n_j(x)}=\follow$;}\\
S^{n-m_j(x)}y_\mathrm{mark}&\text{$n\in R^\mathrm{mark}_j(x)$, $B_j(x)\in\boys$, $z_{n_j(x)}=\follow$;}\\
\nullcon &\text{otherwise.}
\end{cases}
\end{equation*}

We wish to $r$-shadow elements of the sequence $\psi(x,z)$ that are in $Y$.    By condition \eqref{it:speclen} of Lemma \ref{choose-N},
the orbit segments that we are attempting to shadow are of length less
than $N$ and are separated by at least $M>L_r(N)$.  Note also that $\mu \times \zeta$ is ergodic, since $\zeta$ is mixing.       Thus by Proposition \ref{interpolation}, on the  product space $\Omega$ with an  invertible measure-preserving transformation $\tau$ defined via:
\[
\Omega:=X
\times \girls_\follow^\Z \times Y \ \text{and} \ \tau:= T \times \sigma \times S,
\]
there exists an ergodic invariant
measure $\iota$ on $\Omega$
such that for $\iota$-almost all points $\omega = (x,z,y)$, we have
\begin{equation}
\label{spec-con}
\metric (S^n y,
\psi (x, z)_n)
\leq r \text{ whenever }\psi ( x, y)_n \ne \nullcon, \text{ for all } n\in \Z.
\end{equation}
Define $\tilde \xi$ to be the ergodic measure obtained by projecting $\iota$ onto the first and
last coordinates of the tuple.  Since Proposition \ref{interpolation} gives that the projection of $\iota$ on $X \times \girls_\follow^\Z$ is $\mu \times \zeta$, we have that
  $\tilde\xi$ is a $\mu$-joining.
\end{proof}

It remains to verify that $\tilde \xi$ satisfies the required properties.
Using Corollary \ref{dict}   \eqref{eq:wstarest} on the blocks of
length $N - 11M$ where we are applying the dictionary $\phi$ on $\boys$, we will obtain
the weak$^*$-closeness by Corollary \ref{dict} \eqref{largeBB} and \eqref{eq:wstarest}.

We consider a $\tilde\xi$-typical point $(x,y)$.
If we are given $y$, the fact that we are placing markers in between the coded boys
will enable us to decide where the information is encoded, and hence
to recover most of $x$, giving us the latter half of the approximate embedding  property.
Suppose we know $x$ and want to guess which element of 
$\mathcal Q_\ell$ the point $y$ belongs to.
We have no chance if $B_0(x)$ does not belong
to $\boys$; if it does, and the dictionary $\phi$ is applied, instead of using a random element of
$\girls$, then we know that $(S^ky)$ shadows $(S^{k-n_0(x)}\phi(B_j(x)))$ in $R^\text{info}_0(x)$.      Unless
$S^{n_0(x)}\phi(B_j(x))$ lies close to the boundary of $\mathcal Q_\ell$, we can deduce
which element $y$ lies in, giving us the
other approximate embedding property.

Finally, the entropy of $S$ with respect to $\pi_2 ^* (\tilde \xi)$ has a lower bound that is the sum of two contributions:
one term is the entropy obtained by encoding most of the blocks of $X$,
and another is given by sometimes using random elements of $\girls$.
We do not encode blocks that are not members of $\boys$ and
because of the error set in the Rokhlin's lemma the contribution from encoding blocks of $X$
is strictly less than $h_{\mu}(T)$; however, this entropy loss is at most
\[
\left(\mu\Big(\bigcup_{B \not\in \boys} [B]\Big) + \mu(E_0) \right)  \cdot\log |\mathcal P|.
\]
For an $\epsilon/2$ proportion of blocks that do belong to $\boys$, we do not apply the
dictionary $\phi$, but instead use
a random element of $\girls$; we will see that this results in a net gain of entropy that is large
enough to cover the losses  incurred by encoding only members of $\boys$ and by not
being able to encode anything on the set $E_0$.

\begin{proof}[Proof of Lemma \ref{build}: weak$^*$-closeness]

Let $W \in \bigvee_{i=0} ^{N-1} T^{-i} \mathcal P$.
Recall that by construction, we have
$\mu(W \cap F) = \mu(W)\mu(F)$.
Since $T$ preserves the measure $\mu$ and for all $ 0\leq k \leq N-1$
\[
\ns{x \in X: T^{-k}x \in F} = \ns{ x \in X: x \not \in E_0, n_0(x) =-k},
\]  
we have
\begin{equation}
\label{indep}
 \mu\ns{ x \in X: T^{n_0(x)}x \in W, x \not \in E_0, n_0(x) = -k}= \mu(W)\mu(F)
 \end{equation}
for all $ 0 \leq k \leq N-1$ and hence
\begin{equation}
\label{indep:two}
\mu\ns{x \in X : T^{n_0(x)}x \in W:  x \not \in E_0} = \mu(W)(1-\delta).
\end{equation}
Define the `bad set' by
\begin{align*}
\badset_1 &:= \{(x,z,y)\in\Omega\colon x\in E_0\}\\
\badset_2 &:= \{ (x,z,y)\in\Omega\colon x\not \in E_0,  x_{n_0(x)} \cdots x_{n_0(x) + N -1} \not \in \boys\} ;\\
\badset_3 &:=\{(x,z,y)\in\Omega\colon x\not\in E_0, z_{n_0(x)}\ne \follow\};\\
\badset_4 &:= \{(x,z,y)\in\Omega\colon x\not \in E_0, 0  \not\in R_0^\text{info}(x)\};\\
\badset&:=\badset_1\cup \badset_2\cup \badset_3\cup\badset_4.
\end{align*}

We have that $\iota(\badset_1) = \mu(E_0) = \delta$.    By \eqref{indep:two} and
Corollary \ref{dict} \eqref{largeBB}, we get
\[
\iota(\badset_2)=(1-\delta)\mu\bigg(\bigcup_{B\in\mathcal P^N\setminus\boys}
[B] \bigg)\le (1-\delta)15\delta.
\]    
By independence of $\zeta$ and $\mu$, we have
$\iota(\badset_3) = (1-\delta)(\e/2)$.    Using the fact that the length of the interval $R_0^{\text{info}}(x)$
is $N-11M$ when $x\not \in E_0$,  by our choice of $M= \lfloor \delta N / 11 \rfloor$ in
 \eqref{theM}, we have $\iota(\badset_4) \leq \delta$.    Thus by \eqref{choice-of-con}, we have
\begin{equation}
\label{badseteq}
 \iota(\badset) \le 17\delta + \e/2 <3\epsilon/4.
\end{equation}

Let $f\in\mathrm{Lip}_1(X\times Y)$. For $0\le k<N$, let
$A_k=\{(x,z,y)\colon x\in \badset_1^c\cap \badset_2^c\cap \badset_3^c,\, n_0(x)=-k\}$.   Notice that
\begin{align*}
\badset^c &= \badset_1^c \cap \badset_2^c \cap \badset_3^c \cap \badset_4^c \\
& = \badset_1^c \cap \badset_2^c \cap \badset_3^c \ \cap \ {\bigcup_{k=M}^{N-10M-1}
\ns{(x,z,y) \in \Omega: n_0(x)=-k}} \\
& = {\bigcup_{k=M}^{N-10M-1} A_k}.
\end{align*}
So we have
\begin{align*}
&\int f(x,y)\,d\tilde\xi(x,y)=\int f(x,y)\,d\iota(x,z,y)\\
&=\int_{\badset}f(x,y)\,d\iota(x,z,y)+\sum_{k=M}^{N-10M-1}\int_{A_k}f(x,y)\,d\iota(x,z,y)\\
&=\int_\badset f(x,y)\,d\iota(x,z,y)+(N-11M)\int_{A_0}
\avg_M^{N-10M}f(x,y)\,d\iota(x,z,y).
\end{align*}

By Corollary \ref{dict} \eqref{eq:wstarest}, for $(x,z,y)\in A_0$, we have
$|\avg_M^{N-10M}f(x,y)-\xi(f)|<\e/12+\eta+r$. We also have $(N-11M)\mu(A_0)
=\mu(\badset^c)$. Hence by \eqref{badseteq}  and \eqref{choice-of-con}, we have
\begin{equation*}
|\tilde\xi(f)-\xi(f)| < \mu(\badset)+\epsilon/{12}+\eta+r<\epsilon.
\qedhere
\end{equation*}
\end{proof}

\begin{proof}[Proof of Lemma \ref{build}: approximate embedding properties]
We first show that $\mathcal Q_\ell\stackrel{\epsilon}{\subset}{ \bigvee_{i \in \Z}{T^{i}\mathcal P}}  \mod \tilde \xi$.
Enumerate $\mathcal Q_\ell$ as $\{D_1,\ldots,D_n\}$ and
regard $\mathcal Q_\ell$ as a map from $Y$ to $\{1,\ldots,n\}$
where $\mathcal Q_\ell(y)=j$ if $y\in D_j$.
Define
\begin{equation*}
\chi(x):=\begin{cases}
	S_0^{-n_0(x)}\phi(B_0(x))&\text{if it's defined;}\\
	y_\mathrm{mark}&\text{otherwise.}
	\end{cases}
\end{equation*}
We define  $\mathcal Q'$ from $X$ to $\{1,\ldots,n\}$ by
\begin{equation*}
\mathcal Q'(x):=\mathcal Q_\ell(\chi(x)).
\end{equation*}
Since $\mathcal Q'$ is $\sigma(\bigvee_{i \in \Z} T^{i}\mathcal P)$-measurable, it suffices to show that
\[\iota(\{(x,z,y)\colon \mathcal Q_\ell(y)\ne \mathcal Q'(x)\})<\epsilon.\]

Let
\[\badset_5:= \{(x,z,y)\not\in\badset\colon \chi(x)\in\partial_r\mathcal Q_\ell\}.\]
Notice that by definition of $\iota$, we have
\[\{(x,z,y)\colon \mathcal Q_\ell(y)\ne \mathcal Q'(x)\}
\subset \badset\cup \badset_5\mod\iota,\]
By the definition of the set $\girls$ and $\phi$, we have that   $\phi(B)\in S_{6,N}$ for
all $B\in\boys$.   Thus by Lemma \ref{choose-N} \eqref{it:offboundary} we have
$\avg_{M}^{N-10M}\mathbf 1_{\badset_5}(x,z,y)<\delta$ on $A_0$.
We compute as in the weak$^*$-closeness section.
\begin{align*}
\iota(\badset_5)&=\sum_{k=M}^{N-10M-1}\iota(\badset_5\cap A_k)\\
&=(N-11M)\int_{A_0}\avg_{M}^{N-10M}\mathbf 1_{\badset_5}(x,z,y)\,d\iota(x,z,y)
<\delta.
\end{align*}
  By \eqref{badseteq} and \eqref{choice-of-con}, we have
\begin{equation*}
\tilde\xi\{(x,y)\colon \mathcal Q_\ell(y)\ne\mathcal Q'(x)\}\le \iota(\badset\cup \badset_5)\le \tfrac{3}{4}\e+\delta<\epsilon.
\end{equation*}

Next, we show the approximate embedding  in the opposite direction.
Let  $\q:=\bigvee_{i\in \Z}{S^{-i}\mathcal  Q_\ell}$.   (Thus $\q(y) \in \q$ is the part which contains $y$.)   We need to show that  $\mathcal P\stackrel{\e}{\subset} \q \mod \tilde \xi$.    The proof relies on the  markers
 and the invertibility of $\phi$.  It suffices to define a  function $\tilde{\mathcal P}:Y \to X$ and a set $\badset_6$ such that
\begin{enumerate}
\item
\label{meas-BBB}
$\iota(\badset \cup \badset_6) < \e$,
\item
\label{first-y}
$\{(x,z,y)\colon \mathcal P(x)\ne\tilde{\mathcal P}(y)\} \subset \badset \cup \badset_6$, and
\item
\label{equivclass}
for $\iota$-a.e.\ $(x,z,y)\in\Omega\setminus(\badset\cup \badset_6)$,  if $y' \in \q(y)$, then
$\tilde{\mathcal P}(y) = \tilde{\mathcal P}(y').$
\end{enumerate}

We let 
\[
\badset_6:=\{(x,z,y)\in\Omega\colon n_0(x)<-N+9M\}.
\] 
We see
$\iota(\badset_6)<9M/N<\delta$.  From \eqref{badseteq} and \eqref{choice-of-con},  we see that  property \eqref{meas-BBB} is satisfied.    Recall that  $0<r,\mathrm{diam}(\mathcal{Q}_{\ell}) < \eta/10$.  Thus for $\iota$-a.e.\ $(x,z,y)\in \Omega$, we have that if $y' \in \q(y)$, then $\metric(S^iy',S^iy) < \eta/10$ for all $i \in \Z$.
    Define
\[
\tilde n_0(y):=\min\big(\min\{k\ge 0\colon S^{k}(y)\in B_0^{8M-1}(y_\mathrm{mark},3\eta-r)\}-(N-9M),0\big).
\]
Equip $\boys$ with an arbitrary total order and define a map $b\colon Y\to\boys$ by
\begin{equation}
\label{argmin}
b(y):=\argmin_{B\in\boys}\metric_M^{N-10M}(y,\phi(B)),
\end{equation}
breaking ties lexicographically if necessary. Finally, set
\[
\tilde{\mathcal P}(y):=b(S^{\tilde n_0(y)}(y))_{-\tilde n_0(y)}.
\]

For $\iota$-a.e.\ $(x,z,y)\in\Omega\setminus(\badset\cup \badset_6)$, we have
$\tilde n_0(y)=n_0(x)$ by the definition of $\iota$, $\girls$ (see the beginning of Corollary \ref{dict}),  and Corollary \ref{decipher}; furthermore, we have that if $y' \in \q(y)$, then $\tilde n_0(y)=\tilde n_0(y')$.
By  \eqref{spec-con} and choice of parameters in Lemma \ref{choose-N}, for  $\iota$-a.e.\ $(x,z,y)\in\Omega\setminus(\badset\cup \badset_6) $, we have
$\metric_M^{N-10M}\big(y',\phi(B_0(x))\big) < r + \eta/10 < \eta/5$, for all $y' \in \q(y)$,  and since $\girls$  is a $(\metric_M^{N-10M},\eta)$-separated set, $B_0(x)$ realizes \eqref{argmin} and $b(y) = b(y')$ for all $y' \in \q(y)$.  Thus property \eqref{equivclass} is satisfied.
Since  $\phi$ is one-to-one, we have for  $\iota$-a.e.\ $(x,z,y)\in\Omega\setminus(\badset\cup \badset_6)$ that   $b(y)=B_0(x)$.  Thus property \eqref{first-y} is satisfied.
%
%
%
\end{proof}

\begin{proof}[Proof of Lemma \ref{build}: entropy preservation]
We define a number of partitions of $\Omega$ that we shall need in order to do
calculations.       We will express a typical point of $\omega \in \Omega$ as $\omega = (x,z,y)$.    Define
\[
C_1:=\{\omega\in\Omega\colon x \not \in E_0, B_0(x)\in \boys
\text{ and }  z_{n_0(x)} = \follow\},
\]
and
\[
C_2:=\{\omega\in\Omega\colon x \not \in E_0, B_0(x)\in\boys \text{ and }
z_{n_0(x)} \not =\follow\}.
\]
Let $C_0:=\Omega\setminus(C_1\cup C_2)$, and $\mathcal C:=\ns{C_0,C_1,C_2}$.   Set
\[
\mathcal R := \sigma\left( \bigvee_{i \in \Z} \tau^{-i} \mathcal C\right).
\]

 By regarding  the partition $\mathcal Q_\ell$ of $Y$
as a partition of $\Omega$, we let
\[
\mathcal Q_{\ell}^{1,2}:=\big\{Q \cap  C_0^c  \colon Q\in\mathcal Q_\ell \big\}
\cup \big\{ C_0 \big\}.
\]
Also, for each $j \in \ns{1,2}$, let
\[
\mathcal Q_{\ell}^{j}:=\big\{Q \cap  C_j  \colon Q\in\mathcal Q_\ell \big\}
\cup \big\{ \Omega \setminus C_j \big\}.
\]

Recall that $\psi$ may take the value $\nullcon$.    Let
\[
\tilde{\mathcal Q}_{\ell}^{1,2}:=\Big\{\{\omega \in C_1 \cup C_2 \colon
\psi(x,z)_0 \in Q\} \colon
Q\in  {\mathcal Q}_{\ell} \cup\ns{\ns{\nullcon}} \Big\}\cup \big\{  C_0 \} .
\]
For each $j \in \ns{1,2}$, let
\[
\tilde{\mathcal Q}_{\ell}^{j}:=\Big\{ \{\omega \in C_j \colon
\psi(x,z)_0 \in Q\}
\colon Q\in  {\mathcal Q}_{\ell} \cup\ns{\ns{\nullcon}}  \Big\}
\cup \big\{  \Omega \setminus C_j \}.
\]

Let $\bigvee_{i=M}^{N-10M-1}S^{-i}\mathcal Q_\ell=\{A_1,\ldots,A_L\}$.
Let ${\mathcal Q}_{\ell}^{\Bl 1,2}$ be the partition given  by
$\{ A_0^{\Bl 1,2},A_1^{\Bl 1,2},\ldots, A_L^{\Bl  1,2}\}$,
where
\[
A_i^{\Bl 1,2}:=\{\omega\in C_1\cup C_2\colon n_0(x)=0\text{ and }
 y \in A_i\}
\]
for $1\le i\le L$ and
$ A_0^{\Bl 1,2}$ is the complementary set.   (Here the `BL' stands for
`block.')

 Note that if $(x,z,y) \in C_1 \cup C_2$ and $n_0(x) = 0$, then  $\psi(x,z)_k \not = \nullcon$ for all $M \leq k \leq N-10M-1$ and thus $\psi(x,z)_0 \in A_i$ for some $i$.
Let $\tilde{\mathcal Q}_{\ell}^{\Bl   1,2}$  be the partition given by
\[
\{\tilde A_0^{\Bl     1,2},\tilde A_1^{\Bl  1,2},\ldots,\tilde
A_{L}^{\Bl   1,2}\},
\]
where
\[
\tilde A_i^{\Bl 1,2}:=\{\omega\in C_1\cup C_2\colon n_0(x)=0\text{ and }
\psi(x,z)_0\in A_i\}
\]
for $1\le i\le L$ and
$\tilde A_0^{\Bl  1,2}$ is the complementary set.
Similarly for $j\in\{1,2\}$, let
$\tilde{\mathcal Q}^{\Bl   j}$ be the partition
$\{\tilde A_0^{\Bl  j},\tilde A_1^{\Bl  j},\ldots,\tilde A_{L}^{\Bl   j}\}$ where
\[
\tilde A_i^{\Bl    j}:=
\{\omega\in C_j\colon n_0(x)=0\text{ and }
\psi(x,z)_0 \in  A_i\}
\]
and $\tilde A^{\Bl   j}_0$ is the complementary set.

Finally let $\mathcal P^{\Bl   1}$ be the partition with elements
\[
\ns{\omega \in C_1:  x \in P \text{ and } n_0(x) = 0},
\]
where $P \in \bigvee_{i=0}^{N-1}T^{-i}\mathcal P$, together with the complement
of the union of this collection; $\mathcal P^{\Bl   2}$ be the partition with elements
\[
\ns{\omega \in C_2:  x \in P \text{ and } n_0(x) = 0},
\]
where $P \in \bigvee_{i=0}^{N-1}T^{-i}\mathcal P$, together with the
complement; and $\mathcal P^0$ be the partition with elements $P
\cap\{\omega\colon B_0(x)\not\in\boys \text{ or } x \in E_0\} $, where
$P \in \bigvee_{i=0}^{N-1}T^{-i}\mathcal P$, again along with the
complement.

Let us pause to explain the above notations.  The partition $\mathcal
C$ tells you whether you are attempting to shadow an element of
$\girls$ determined by the dictionary $\phi$ ($C_1$), a random element
of $\girls$ ($C_2$) or if there is no constraint ($C_0$).  For the
$\mathcal Q_{\ell}$ partitions, the superscript $1$ indicates that you
are looking at those times when you are shadowing an element of $\girls$
determined by $\phi$; $2$ indicates that you are shadowing an random
element of $\girls$; and $1,2$ indicates that you are shadowing either
of these two.  The tildes indicate the partition element that you are
aiming for (i.e.\  the partition element that $\psi(x,z)$ lies in)
rather than the partition element that $y$ actually ends up lying in.
Also the superscript `BL' indicates that you are getting a whole
block's worth of information at once, whereas otherwise you get the
information a symbol at a time.  The partition $\mathcal
Q_{\ell}^{1,2}$ tells you which element of $\mathcal Q_{\ell}$ the
point $y$ ends up in for the parts that are constrained by
$\psi(x,z)$.  The partition $\mathcal Q_{\ell}^{\Bl 1,2}$ tells you
which element of $\bigvee_{i=M}^{N-10M-1}S^{-i}\mathcal Q_{\ell}$ the
point $y$ ends up in if $x$ is at the base of the Rokhlin tower and is
at the start of a $\boys$ block; $\tilde{\mathcal Q}_{\ell}^{1,2}$
tells you which element of $\bigvee_{i=M}^{N-10M-1}S^{-i}\mathcal
Q_{\ell}$ the orbit segment you are aiming for (${(\psi(x,z)_i)}_{M
  \le i<N-10M}$ ) belongs to when $x$ is at the base of the Rokhlin
tower and is at the start of a $\boys$ block.  The partition
$\tilde{\mathcal Q}_{\ell}^1$ tells you which element of
$\bigvee_{i=M}^{N-10M-1}S^{-i}\mathcal Q_{\ell}$ the orbit segment
${(\psi(x,z)_i)}_{M \le i<N-10M}$ belongs to when you are shadowing an
element of $\girls$ determined by $\phi$; and $\tilde{\mathcal
  Q_{\ell}}^2$ tells you which element of
$\bigvee_{i=M}^{N-10M-1}S^{-i}\mathcal Q_{\ell}$ the orbit segment
${(\psi(x,z)_i)}_{M \le i<N-10M}$ belongs to when you are shadowing an
random element of $\girls$.  The partitions $\mathcal P^{\Bl j}$ (for
$j=1,2$) tell you the block $x_0^{N-1}$ when $x$ is at the base of the
tower and $B_0(x)\in\boys$ if the dictionary is being used ($j=1$) or
if the word is being randomized ($j=2$).  Note that the partition
$\mathcal P^0$ tells you the symbol $x_0$ when
$B_0(x)\not\in\boys$. or $x \in E_0$.  The partition $\mathcal P^0\vee
\mathcal P^{\Bl 1}\vee\mathcal P^{\Bl 2}$ is a generating partition
for $\mu$

Let
\[
\es:= \ns{ \omega \in \Omega: x \not \in E_0, n_0(x) = 0, B_0(x) \in
  \boys, z_0 \not = \follow},
\]
(here $\es$ stands for `entropy gain').
Then by \eqref{indep} and Corollary \ref{dict} \eqref{largeBB}, and
the independence of $\mu$ and $\zeta$, we have $\iota(\es) \geq
(\fracc{1}{N})(1-\delta)(1 - 15\delta)( \epsilon/2)$, so that by
\eqref{stupid}, we have
\begin{equation}
8N\Delta\cdot \iota(\es) >3\epsilon\Delta.\label{eq:entropygap}
\end{equation}
We also note that $C_0$ is the set of points in $\Omega$ whose first
coordinate belongs to $E_0 \ \cup \  \bigcup_{i=0}^{N-1}\tau^{-i}
\Big(F\cap\bigcup_{B\not\in\boys}[B]\Big)$. We therefore calculate
\[
\iota(C_0)=\mu(E_0) +\big((1-\mu(E_0))/N \big) \cdot
N  \cdot\mu\Big(\bigcup_{B\not\in\boys}[B]\Big)<16\delta.
\]  Using
\eqref{part-cap-delta} we obtain
\begin{equation}
\label{eq:badboyent}
\iota(C_0)\log|\mathcal P|<\epsilon\Delta.
\end{equation}
The following facts will be used to complete the calculation:

\begin{enumerate}[(a)]
\item
\label{coarser}
$ h_{\iota}(\tau,\mathcal Q_\ell|\mathcal R)\ge
h_{\iota}(\tau,\mathcal Q^{1,2}_\ell|\mathcal R)$;
\item
\label{dbarr}
$h_{\iota}(\tau,\mathcal Q^{1,2}_\ell|\mathcal R)\ge
h_{\iota}(\tau,\tilde{\mathcal Q}^{1,2}_\ell|\mathcal R)$;
\item
\label{block}
$h_{\iota}(\tau,\tilde{\mathcal Q}^{1,2}_\ell|\mathcal R)\ge
h_{\iota}(\tau,\tilde{\mathcal Q}^{\Bl 1,2}_\ell|\mathcal R)$;
\item
\label{indep2}
$h_{\iota}(\tau,\tilde{\mathcal Q}^{\Bl 1,2}_\ell|\mathcal R)
= h_{\iota}(\tau,\tilde{\mathcal Q}^{\Bl 1}_\ell|\mathcal R) +
h_{\iota}(\tau,\tilde{\mathcal Q}^{\Bl  2}_\ell|\mathcal R)$;
\item
\label{noloss}
$h_{\iota}(\tau,\tilde{\mathcal Q}^{\Bl 1}_\ell|\mathcal R)=
h_{\iota}(\tau,\mathcal P^{\Bl   1}|\mathcal R)$;
\item
\label{randomgain}
$h_{\iota}(\tau ,\tilde{\mathcal Q}^{\Bl 2}_\ell | \mathcal R) =
\mu(\es)\log|\girls|$;
\item
\label{triv}
$h_{\iota}(\tau, \mathcal P | \mathcal R) \leq h_{\iota}(\tau,
\mathcal P^{\Bl  1}|
\mathcal R) + h_{\iota}(\tau, \mathcal P^{\Bl  2}| \mathcal R) + h_{\iota}(\tau,
\mathcal P^{0}| \mathcal R)$;
\item
\label{trivv}
$h_{\iota}(\tau, \mathcal P^0 | \mathcal R) \leq \iota(C_0)\log
|\mathcal P|<\epsilon\Delta$;
\item
\label{randomloss}
$h_{\iota}(\tau, \mathcal P^{\Bl 2} | \mathcal R) \le
\mu(\es)\log|\boys|$;
\item
\label{Rent}
$h_\iota(\tau,\mathcal C)\le (\delta+\tfrac 1N)\log 3\le \epsilon\Delta.$
\end{enumerate}

Notice that \eqref{coarser},
\eqref{block} and \eqref{noloss} follow from the fact that if
$\mathcal P_1$ and $\mathcal P_2$ are partitions such that
$\sigma(\mathcal \bigvee_{i \in \Z}T^{-i}\mathcal P_1)\supseteq
\sigma(\bigvee_{i \in \Z}T^{-i}\mathcal P_2)$, then $h(\tau,\mathcal
P_1)\ge h(\tau,\mathcal P_2)$. Facts \eqref{triv} and \eqref{trivv}
follow from standard entropy results, and \eqref{eq:badboyent}.

To
see that \eqref{dbarr} holds, notice that observing the elements of
$\mathcal R$ and $\mathcal Q_\ell^{1,2}$ the point $\omega=(x,z,y)$ lies in is
sufficient to determine which element of $\girls$ was being targeted.
   Set    
\[
g(y):=\argmin_{y' \in \girls}\metric_M^{N-10M}(y,y') \text{ and }
\]
\[
G(y,k):= {S^{-n_k(x)}}g(S^{n_k(x)}(y)).
\]
  One can verify with definition of $\psi$ and the fact that  $\girls$ is $(\metric_M^{N-10M}, \eta)$-separated  that for all $k \in \Z$ we have $G(y,k) = \psi(x,z)_k$  on $C_1 \cup C_2$; furthermore, if $y' \in \q(y)$, then $G(y',k)=G(y,k)$.  (Recall that $\q:=\bigvee_{i\in \Z}{S^{-i}\mathcal  Q_\ell}$.)  Thus it suffices to show that $n_k$ restricted to $C_1 \cup C_2$ is $\mathcal R$ measurable; this follows from the fact that for $\omega \in C_1 \cup C_2$,
if $a:=\sup\ns{n <0 : \tau^n \omega \not \in C_1 \cup C_2}$ and
$b:=    \inf\ns{n >0 : \tau^n \omega \not \in C_1 \cup C_2}$, then
$b-a-1$ is finite and  a multiple of $N$.

  For \eqref{randomgain} and
\eqref{randomloss}, we work with the information functions
$I_\iota(\tilde{\mathcal Q}_\ell^{\Bl 2}|\mathcal R)$ and
$I_\iota(\mathcal P^{\Bl 2}|\mathcal R)$; these are 0 if $\omega\not\in
\es$, whereas if $\omega \in \es$, then they are $\log|\girls|$
(since each girl appears independently with equal likelihood) and at
most $\log|\boys|$,  respectively.

To establish \eqref{indep2},
it suffices to show that $h_\iota(\tilde{\mathcal Q}_\ell^{\Bl
  2}|\tilde{\mathcal Q}_\ell^{\Bl 1}\vee \mathcal R)
=h_\iota(\tilde{\mathcal Q}_\ell^{\Bl 2}|\mathcal R)$. This follows
since the second coordinate of $\Omega$ is independent of the first.

Finally, to see \eqref{Rent}, we use Abramov's formula with the
induced transformation of $\tau$ to the set 
\[
A:= \ns{ \omega \in \Omega: x \in E_0\cup
F}.
\]  
Notice that between visits to $A$, the system stays entirely in a
single element of $\mathcal C$. Hence we see that
$h_\iota(\tau,\mathcal C)\le \iota(A)\log 3\le (\delta+\tfrac 1N)\log
3$; this is bounded above by $\epsilon\Delta$ using
\eqref{part-cap-delta} and Lemma \ref{choose-N} \eqref{N-entropy-bound}.

We then have the following calculation.  By Corollary \ref{dict}
\eqref{G} and \eqref{largeBB},
\begin{equation}
\label{ratio}
\log ( \fracc{|\girls|}{|\boys|}  )   \geq
N ( h_{{\pi_2^*(\xi)}}(S) - h_{\mu}(T) - 2\Delta).
\end{equation}
    Let $h = h_{\pi_2^*(\tilde\xi)}(S)$.   We have that
\begin{align*}
h  & \geq h_{\pi_2^*(\tilde\xi)}(S,\mathcal Q_\ell) \\
 &\geq  h_{\iota}(\tau,\mathcal Q_\ell|\mathcal R)
 \geq   h_{\iota}(\tau,\mathcal Q^{1,2}_\ell|\mathcal R)
\ge h_{\iota}(\tau,\tilde{\mathcal Q}^{1,2}_\ell|\mathcal R) \text{
  (by \eqref{coarser} and \eqref{dbarr})} \\
&\ge h_{\iota}(\tau,\tilde{\mathcal Q}^{\Bl 1,2}_\ell|\mathcal R) =
h_{\iota}(\tau,\tilde{\mathcal Q}^{\Bl  1}_\ell|\mathcal R) +
h_{\iota}(\tau,\tilde{\mathcal Q}^{\Bl 2}_\ell|\mathcal R)  \text{ (by
  \eqref{block},\, \eqref{indep2})} \\
& =    h_{\iota}(\tau,\mathcal P^{\Bl  1}|\mathcal R) +
\iota(\es)\log|\girls|  \text{ (by \eqref{noloss} and \eqref{randomgain})}.
\end{align*}
Hence
\begin{align*}
h & \geq h_{\iota}(\tau, \mathcal P | \mathcal R) -  h_{\iota}(\tau,
\mathcal P^{\Bl 2}|
\mathcal R) -  h_{\iota}(\tau, \mathcal P^{0}| \mathcal R) +
\iota(\es)\log|\girls|
\text{ (by \eqref{triv})} \\
&\geq h_{\iota}(\tau,\mathcal P) -h_\iota(\tau,\mathcal C)- \epsilon\Delta +
\iota(\es)\log(|\girls|/|\boys|)
\text{ (by \eqref{trivv} and \eqref{randomloss})}\\
&\geq  h_{\mu}(T) -2\epsilon\Delta + 8N\Delta\iota(\es) \text{ (by
  \eqref{Rent},  \eqref{ratio} and \eqref{def-cap-delta})}\\
& \geq  h_{\mu}(T)  \text{ (by \eqref{eq:entropygap})}.
\qedhere
\end{align*}
\end{proof}

\bibliographystyle{abbrv}
\bibliography{embedding}

\end{document}